\documentclass{amsart}

\usepackage{amsmath,amsfonts,amssymb}
\usepackage{enumitem}
\usepackage{tikz}

\usepackage{fullpage}


\newtheorem{theorem}{Theorem}[subsection]
\newtheorem{lemma}[theorem]{Lemma}
\newtheorem{proposition}[theorem]{Proposition}
\newtheorem{corollary}[theorem]{Corollary}
\newtheorem{conjecture}[theorem]{Conjecture}

\theoremstyle{definition}
\newtheorem{definition}[theorem]{Definition}

\theoremstyle{remark}

\numberwithin{equation}{subsection}


\newcommand{\newword}[1]{\textbf{\emph{#1}}}

\newcommand{\KD}{\ensuremath{\mathrm{KD}}}
\newcommand{\SSYT}{\ensuremath{\mathrm{SSYT}}}
\newcommand{\SSKT}{\ensuremath{\mathrm{SSKT}}}
\newcommand{\Yam}{\ensuremath{\mathrm{Yam}}}
\newcommand{\QYKD}{\ensuremath{\mathrm{QYKD}}}

\newcommand{\D}{\ensuremath{\mathbb{D}}}

\newcommand{\Label}{\ensuremath{\mathcal{L}}}
\newcommand{\Le}{\ensuremath{\mathrm{\mathbf{L}}}}

\newcommand{\wt}{\ensuremath\mathrm{\mathbf{wt}}}
\newcommand{\des}{\ensuremath\mathrm{\mathbf{des}}}
\newcommand{\comp}[1]{\ensuremath\mathbf{#1}}

\newcommand{\Sym}{\ensuremath{\mathcal{S}}}
\newcommand{\Red}{\ensuremath{\mathrm{R}}}

\newcommand{\schubert}{\ensuremath{\mathfrak{S}}}
\newcommand{\key}{\ensuremath{\kappa}}
\newcommand{\fund}{\ensuremath{\mathfrak{F}}}
\newcommand{\kohnert}{\ensuremath{\mathfrak{K}}}

\newcommand{\B}{\ensuremath{\mathcal{B}}}
\newcommand{\e}{\ensuremath{\mathrm{e}}}
\newcommand{\f}{\ensuremath{\mathrm{f}}}
\newcommand{\Ke}{\ensuremath{\mathfrak{e}}}
\newcommand{\Kf}{\ensuremath{\mathfrak{f}}}
\newcommand{\Rect}{\ensuremath{\mathfrak{R}}}
\newcommand{\rect}{\ensuremath{\mathrm{rect}}}

\newcommand{\Koh}{\ensuremath{\mathcal{K}}}

\newlength\cellsize \setlength\cellsize{8\unitlength}

\savebox2{
\begin{picture}(8,8)
\put(0,0){\line(1,0){8}}
\put(0,0){\line(0,1){8}}
\put(8,0){\line(0,1){8}}
\put(0,8){\line(1,0){8}}
\end{picture}}

\newcommand\boxify[1]{\def\thearg{#1}\def\nothing{}%
\ifx\thearg\nothing\vrule width0pt height\cellsize depth0pt%
  \else\hbox to 0pt{\usebox2\hss}\fi%
  \vbox to \cellsize{\vss\hbox to \cellsize{\hss$_{#1}$\hss}\vss}}

\savebox3{
\begin{picture}(8,8)
\put(4,4){\circle{8}}
\end{picture}}

\newcommand{\circify}[1]{\def\thearg{#1}\def\nothing{}%
\ifx\thearg\nothing\vrule width0pt height\cellsize depth0pt%
  \else\hbox to 0pt{\usebox3\hss}\fi%
  \vbox to \cellsize{\vss\hbox to \cellsize{\hss$_{#1}$\hss}\vss}}

\newcommand\nullify[1]{\def\thearg{#1}\def\nothing{}%
\ifx\thearg\nothing\vrule width0pt height\cellsize depth0pt%
  \else\hbox to 0pt{\hss}\fi%
  \vbox to \cellsize{\vss\hbox to \cellsize{\hss$_{#1}$\hss}\vss}}

\newcommand\tableau[1]{\vtop{\let\\=\cr
\setlength\baselineskip{-8000pt}
\setlength\lineskiplimit{8000pt}
\setlength\lineskip{0pt}
\halign{&\boxify{##}\cr#1\crcr}}}

\newcommand\cirtab[1]{\vline\vtop{\let\\=\cr
\setlength\baselineskip{-8000pt}
\setlength\lineskiplimit{8000pt}
\setlength\lineskip{0pt}
\halign{&\circify{##}\cr#1\crcr}}}

\newcommand\nulltab[1]{\vtop{\let\\=\cr
\setlength\baselineskip{-8000pt}
\setlength\lineskiplimit{8000pt}
\setlength\lineskip{0pt}
\halign{&\nullify{##}\cr#1\crcr}}}

\newlength\bigcellsize \setlength\bigcellsize{12\unitlength}
\savebox4{
\begin{picture}(12,12)
\put(6,6){\circle{12}}
\end{picture}}

\newcommand{\bigcir}[1]{\def\thearg{#1}\def\nothing{}%
\ifx\thearg\nothing\vrule width0pt height\bigcellsize depth0pt%
  \else\hbox to 0pt{\usebox4\hss}\fi%
  \vbox to \bigcellsize{\vss\hbox to \bigcellsize{\hss$\scriptstyle #1$\hss}\vss}}

\newcommand{\cball}[2]{%
  \begin{tikzpicture}
    \filldraw[fill=#1!35,draw=black] circle (4pt) node {$\scriptstyle #2$};
  \end{tikzpicture}
}

\newcommand{\leftball}[2]{\makebox[0pt]{\raisebox{1.5pt}{$\leftarrow$}}\cball{#1}{#2}}

\usepackage{xcolor}
\usepackage[colorinlistoftodos]{todonotes}

\begin{document}


\title{Demazure crystals for Kohnert polynomials}  

\author{Sami Assaf}
\address{Department of Mathematics, University of Southern California, 3620 S. Vermont Ave., Los Angeles, CA 90089-2532, U.S.A.}
\email{shassaf@usc.edu}
\thanks{Work supported in part by NSF DMS-1763336.}


\date{\today}


\keywords{Demazure characters, Kohnert polynomials, Schubert polynomials}

\begin{abstract}
  Kohnert polynomials are polynomials indexed by unit cell diagrams in the first quadrant defined earlier by the author and Searles that give a common generalization of Schubert polynomials and Demazure characters for the general linear group. Demazure crystals are certain truncations of normal crystals whose characters are Demazure characters. For each diagram satisfying a southwest condition, we construct a Demazure crystal whose character is the Kohnert polynomial for the given diagram, resolving an earlier conjecture of the author and Searles that these polynomials expand nonnegatively into Demazure characters. We give explicit formulas for the expansions with applications including a characterization of those diagrams for which the corresponding Kohnert polynomial is a single Demazure character.
\end{abstract}

\maketitle

%
\section{Introduction}
%
\label{sec:introduction}

Given a polynomial expressed as the generating polynomial of a set of combinatorial objects, we often seek a hidden structure on those objects in the hope of revealing more information about the polynomial. One desirable structure is one that allows us to generate, preferably in some systematic way, the entire set of combinatorial objects beginning with one initial object that we might regard as indexing the given polynomial. If the polynomial is known or suspected to expand nonnegatively into irreducible characters for some group, then another natural structure for which to search is that of a crystal, the combinatorial skeleton of an unknown module for the group whose character is the given polynomial. 

The polynomials we consider in this paper are the \emph{Kohnert polynomials} introduced by Assaf and Searles \cite{AS19} as the generating polynomials of certain sets of diagrams in the plane. The set of diagrams for a given polynomial can be generated from an initial diagram using \emph{Kohnert moves} \cite{Koh91}, a simple combinatorial rule for moving cells of a diagram down. This process leads to a natural poset structure with a unique maximal element, which we may regard as the indexing diagram for the corresponding Kohnert polynomial.

While the Kohnert poset is advantageous in that one can systematically generate all objects in the set, the poset itself is neither ranked nor is it a lattice. Moreover, given a candidate diagram for the set, the only way to determine if the diagram belongs to the set is to generate the entire set and search for the candidate within it. One of our main results, proven in Theorems~\ref{thm:label-necessary} and \ref{thm:label-sufficient}, gives a static necessary and sufficient condition for a diagram to be in a specific Kohnert poset. This condition generalizes the useful \emph{Kohnert tableaux} defined by Assaf and Searles \cite{AS18} for Demazure characters.

Kohnert polynomials were inspired by two special cases, the geometrically important basis of \emph{Schubert polynomials} and the representation theoretically important basis of \emph{Demazure characters}. Schubert polynomials, introduced by Lascoux and Sch{\"u}tzenberger \cite{LS82}, are polynomial representatives of Schubert classes for the cohomology of the flag manifold whose structure constants, within the ring of polynomials, precisely give the Schubert cell decomposition for the corresponding product of Schubert classes. Demazure modules, introduced by Demazure \cite{Dem74}, form a filtration of highest weight modules compatible with the Bruhat order of the corresponding Weyl group, and their characters \cite{Dem74a} form a basis of the ring of polynomials. Given these deep connections to representation theory and geometry for these instances of Kohnert polynomials, it is natural to ask what other Kohnert polynomials enjoy such connections. Motivated by this question, Assaf and Searles \cite{AS19} characterized diagrams for which the corresponding Kohnert polynomials expand nonnegatively into the \emph{fundamental slide basis} \cite{AS17} for polynomials. Moreover, they give a simple criterion for  diagrams, called \emph{southwest}, for which they conjecture the corresponding Kohnert polynomials expand nonnegatively into Demazure characters.

The main result of this paper, stated in Corollary~\ref{cor:main}, is a proof of this conjecture. Our proof comes via a new structure on the elements of a Kohnert poset, namely that of a \emph{Demazure crystal} \cite{Kas93}. Kashiwara \cite{Kas91} combinatorialized certain highest weight modules through his study of crystal bases, which Littelmann conjectured \cite{Lit95} and Kashiwara proved \cite{Kas93} generalize to Demazure modules via Demazure crystals. Our new crystal operators on diagrams are as simple to define as Kohnert moves, though they are not, in general, given by Kohnert moves. In Theorem~\ref{thm:closed}, we prove the crystal operators act within the Kohnert poset if and, outside a few isolated cases, only if the initial diagram is southwest. The proof of Theorem~\ref{thm:main}, stating the structure is that of a Demazure crystal, comprises the majority of this paper.

The two main tools used in the proof are the aforementioned characterization of Kohnert tableaux and a new algorithm on diagrams, called \emph{rectification}. Rectification is, essentially, the transpose of the crystal operators which acts by pushing cells of a diagram to the left. In Theorem~\ref{thm:commute}, we prove rectification embeds any set of diagrams connected under the crystal operators into a connected highest weight crystal in a way that interwines the crystal operators on diagrams with the Kashiwara crystal operators \cite{Kas91} on the highest weight crystal. In \cite{AG}, Assaf and Gonz\'{a}lez use rectification in this same fashion to embed diagrams that generate specialized nonsymmetric Macdonald polynomials \cite{HHL08} into highest weight crystals, thus realizing a Demazure crystal structure for nonsymmetric Macdonald polynomials. In \cite{AQ}, Assaf and Quijada prove rectification specializes to Robinson--Schensted insertion \cite{Rob38,Sch61} on semistandard Young tableaux and use it as a tool to prove a signed Pieri formula for Demazure characters. We expect this rectification operation to have many more applications in similar contexts.

Our Demazure crystal structure on diagrams is a ranked lattice, though the structure on diagrams for a given Kohnert polynomial is not connected as it was with the Kohnert poset. However, the Demazure crystal operators partition the diagrams in the Kohnert poset into a disjoint union of connected components, each generating a single Demazure character. Thus we prove the nonnegative expansion into Demazure characters. While such an expansion was already known for Schubert polynomials \cite{LS90,RS95,Ass-EG}, we show how using the Kohnert poset structure in conjunction with the Demazure crystal structure leads to more efficient formulas than previously known, both when expanding into Demazure characters and into fundamental slide polynomials. Moreover, the simple nature of the crystal operators on the diagrams in the Kohnert poset suggests a natural module structure for which these Kohnert polynomials are the characters.

This paper is organized as follows. We begin in Section~\ref{sec:poly} with a review of Schubert polynomials, Demazure characters, and Kohnert polynomials, where we state the motivating conjecture for this paper, Conjecture~\ref{conj:demazure}, that Kohnert polynomials indexed by southwest diagrams expand nonnegatively into Demazure characters. Our main tool will be that of crystals, which we review in Section~\ref{sec:crystal}. We state in Definition~\ref{def:KD-raise} our main construction, the crystal operators on Kohnert diagrams. Section~\ref{sec:proof} uses the powerful combinatorial tool of rectification of diagrams to prove our Kohnert crystal embeds into a disjoint union of highest weight crystals. Our final tool is that of labelings of diagrams, developed in Section~\ref{sec:label}, where we prove our main results. We conclude in Section~\ref{sec:applications} by giving explicit formulas for Demazure character and fundamental slide expansions, as well as a characterization of when a Kohnert polynomial is equal to a single Demazure character, parallel to the \emph{vexillary} condition for Schubert polynomials.

%
\section{Polynomials}
%
\label{sec:poly}

Our primary objects of study are geometrically motivated bases of the polynomial ring $\mathbb{Z}[x_1,x_2,\ldots]$ that arise as characters for certain modules.

\subsection{Schubert polynomials}
\label{sec:schubert}

Lascoux and Sch{\"u}tzenberger \cite{LS82} defined polynomial representatives of Schubert classes for the cohomology of the flag manifold with nice algebraic and combinatorial properties using divided difference operators that act on a certain monomial associated to the long permutation according to a reduced expression for the given permutation. 

For a positive integer $i$, the \newword{divided difference operator} $\partial_i$ is the linear operator that acts on polynomials $f \in \mathbb{Z}[x_1,x_2,\ldots]$ by
\begin{equation}
  \partial_i(f) = \frac{f - s_i \cdot f}{x_i - x_{i+1}} ,
\end{equation}
where $s_i \in \Sym_{\infty}$ is the simple transposition that acts on polynomials by exchanging $x_i$ and $x_{i+1}$.

Given a permutation $w\in\Sym_{\infty}$, a \newword{reduced word} $\rho$ for $w$ is a sequence $\rho=(\rho_{\ell},\ldots,\rho_1)$ such that $w = s_{\rho_{\ell}} \cdots s_{\rho_1}$ with $\ell$ minimal. The simple transpositions $s_i$ generate $\Sym_{\infty}$ subject to the relations $s_i^2=1$ and
\begin{itemize}
\item (commutation relation) $s_i s_j = s_j s_i$ for $|i-j|>1$, 
\item (Yang--Baxter relation) $s_i s_{i+1} s_i = s_{i+1} s_i s_{i+1}$.
\end{itemize}
Letting $\Red(w)$ denote the set of reduced words for $w$, any two elements $\rho,\sigma\in\Red(w)$ are connected by a sequence of commutation and Yang--Baxter relations \cite{Tit69}. 

The $\partial_i$ also satisfy the commutation and Yang--Baxter relations along with $\partial_i^2 = 0$. Thus we may define
\begin{eqnarray}
  \partial_w & = & \partial_{\rho_{\ell}} \cdots \partial_{\rho_1} 
\end{eqnarray}
for any reduced word $\rho = (\rho_{\ell},\ldots,\rho_1)$ for $w$.

\begin{definition}[\cite{LS82}]
  Given a permutation $w\in\Sym_n$, the \newword{Schubert polynomial} $\schubert_w$ is given by
  \begin{equation}
    \schubert_w = \partial_{w^{-1} w_0} \left( x_1^{n-1} x_2^{n-2} \cdots x_{n-1} \right),
    \label{e:schubert}
  \end{equation}
  where $w_0 = n \cdots 2 1$ is the longest permutation of $\Sym_n$ of length $\binom{n}{2}$.
  \label{def:schubert}
\end{definition}

The geometric significance of Schubert polynomials was first established by Fulton \cite{Ful92} who made connections between the divided difference operators and modern intersection theory. Surprisingly, at least from their definition, Schubert polynomials form an integral basis for the full polynomial ring, and their structure constants precisely give the Schubert cell decomposition for the corresponding product of Schubert classes. Therefore they give a way to avoid working modulo the ideal of symmetric polynomials in order to compute intersection numbers. 

\subsection{Demazure characters}
\label{sec:demazure}

For a complex, semi-simple Lie algebra $\mathfrak{g}$ with a Cartan subalgebra $\mathfrak{h}$, Demazure \cite{Dem74a} considered the action of a Borel subalgebra $\mathfrak{b} \supset \mathfrak{h}$ on an extremal weight space, thus constructing \emph{Demazure modules}. While the irreducible representations $V^{\lambda}$ of $\mathfrak{g}$ are indexed by dominant weights $\lambda$, the corresponding Demazure modules $V^{\lambda}_w$ are index by a pair $(\lambda,w)$ where $\lambda$ is a dominant weight and $w$ is an element of the Weyl group. In the case of $\mathfrak{gl}_n$, $\lambda$ is a partition and $w$ is a permutation, which is equivalent to the weak composition $a = w \cdot \lambda$.

Demazure generalized the Weyl character formula \cite{Dem74} to these Demazure modules to obtain \emph{Demazure characters}. His formula can be stated in terms of degree-preserving divided difference operators.

The \newword{degree-preserving divided difference operator} $\pi_i$ is the linear operator that acts on polynomials $f \in \mathbb{Z}[x_1,x_2,\ldots]$ by
\begin{equation}
  \pi_i(f) = \partial_i (x_i f) .
\end{equation}
These $\pi_i$ also satisfy the commutation and Yang--Baxter relations along with $\pi_i^2 = \pi$, allowing us to define
\begin{eqnarray}
  \pi_w & = & \pi_{\rho_{\ell}} \cdots \pi_{\rho_1}
\end{eqnarray}
for any reduced word $\rho = (\rho_{\ell},\ldots,\rho_1)$ for $w$.

\begin{definition}[\cite{Dem74}]
  Given a weak composition $\comp{a}$, the \newword{Demazure character} $\key_{\comp{a}}$ is given by
  \begin{equation}
    \key_{\comp{a}} = \pi_{w} \left( x_1^{\lambda_1} x_2^{\lambda_2} \cdots x_{n}^{\lambda_{\ell}} \right),
    \label{e:key}
  \end{equation}
  where $\lambda=\mathrm{sort}(\comp{a})$ is the unique partition in the $\Sym_{\infty}$ orbit of $\comp{a}$, and $w$ is the unique minimal length permutation such that $w \cdot \lambda = \comp{a}$.
  \label{def:key}
\end{definition}

These Demazure characters form another basis of the polynomial ring, and have been studied under the name \emph{standard bases} by Lascoux and Sch{\"u}tzenberger \cite{LS90} and Kohnert \cite{Koh91}, and under the name \emph{key polynomials} by Reiner and Shimozono \cite{RS95}, Mason \cite{Mas09}, Assaf and Searles \cite{AS18}, and others.

Lascoux and Sch{\"u}tzenberger \cite{LS90} noticed coincidences with Schubert polynomials and Demazure characters giving rise to a characterization of \newword{vexillary permutations} $w$ for which $\schubert_w = \key_{\comp{a}}$ for some $\comp{a}$. Moreover, Lascoux and Sch{\"u}tzenberger \cite{LS90} give a formula for the nonnegative expansion of a Schubert polynomial into Demazure characters. Their formula, with proof details supplied by Reiner and Shimozono \cite[Theorem~4]{RS95}, relies on the combinatorial formula due to Edelman and Greene \cite{EG87} for the Schur expansion for Stanley symmetric functions \cite{Sta84}. Each term in the Schur expansion corresponds to an \newword{increasing reduced word}, that is, a reduced word that occurs as the row reading word of an increasing Young tableau. The Demazure expansion for Schubert polynomials as stated by Lascoux and Sch{\"u}tzenberger \cite{LS90} is determined by searching the Coxeter--Knuth equivalence class of each increasing reduced word for the word with minimal weak descent composition. Assaf \cite{Ass-EG} gives a more explicit formula for this expansion by \emph{lifting} increasing reduced words using crystal raising operators.

\begin{theorem}[\cite{LS90,RS95,Ass-EG}]
  For $w$ a permutation, we have
  \begin{equation}
    \schubert_{w} = \sum_{\substack{\rho \in\Red(w) \\ \rho = \mathrm{row}(T)}} \key_{\des(\mathrm{lift}(\rho))},
    \label{e:nilkey}
  \end{equation}
  where the sum is over reduced words that occur as reading words for increasing Young tableaux, and $\des(\rho)$ is the weak descent composition of $\rho$. 
  \label{thm:LS}
\end{theorem}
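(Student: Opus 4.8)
The plan is to realize $\schubert_w$ as the character of an explicit crystal on decorated reduced words and then decompose that crystal into Demazure crystals. I would begin from the Billey--Jockusch--Stanley monomial expansion, which writes $\schubert_w = \sum_{(\rho,b)} x^b$ over all \emph{compatible sequences} for $w$: pairs consisting of a reduced word $\rho \in \Red(w)$ together with a weakly increasing index sequence $b$ satisfying the standard compatibility condition relating $b$ to the ascents of $\rho$. This reduces the theorem to exhibiting a weight-preserving partition of the set of compatible sequences for $w$ into pieces, one for each increasing reduced word, with each piece generating the corresponding Demazure character.

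Next I would equip the compatible sequences with Kashiwara-style operators $\e_i,\f_i$, acting by the usual signed-bracketing rule on the positions where the index equals $i$ or $i+1$ and adjusting $\rho$ by a Coxeter--Knuth (Yang--Baxter) move whenever moving the index alone would violate compatibility. The content of this step is to check that $\e_i$ and $\f_i$ preserve both $\Red(w)$ and the compatibility condition; granting this, the operators define a $\mathfrak{gl}$-crystal on the compatible sequences for $w$, and since they manifestly track the monomial weight, the character of this crystal is exactly $\schubert_w$.

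The heart of the argument is to show that each connected component is a Demazure crystal. Here I would pass to the stable limit: uniformly shifting the index values upward carries the crystal to the crystal of the Stanley symmetric function $F_w$, whose components are the highest weight crystals $B(\mathrm{shape}(T))$ indexed, via Edelman--Greene, by increasing tableaux $T$ with $\mathrm{row}(T) \in \Red(w)$. Each finite component thus embeds as a subset of some $B(\lambda)$, and I would verify it meets Kashiwara's characterization of a Demazure crystal, namely that it contains a unique source, is closed under the raising operators $\e_i$, and meets each $\f_i$-string in an interval from the top. The sources that survive are precisely the compatible sequences whose reduced word is the row reading word of an increasing tableau, giving the indexing set of \eqref{e:nilkey}.

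Finally I would identify the character of each component. For a source element, recording how many indices equal each value $k$ produces a weak composition, and by the definition of $\mathrm{lift}$ through iterated raising operators this composition is exactly $\des(\mathrm{lift}(\rho))$; hence the component is the Demazure crystal with character $\key_{\des(\mathrm{lift}(\rho))}$, and summing over components yields \eqref{e:nilkey}. The main obstacle is the Demazure property itself: an arbitrary $\e_i$-closed subset of $B(\lambda)$ need not be Demazure, so one must genuinely establish the interval/string condition for every component, and in tandem prove that $\mathrm{lift}$ is well defined independently of the chosen raising sequence and that its weak descent composition agrees with the source weight. This interplay between the recursive Demazure structure and the combinatorics of $\mathrm{lift}$ is the technical core of the proof.
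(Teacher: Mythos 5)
The paper does not actually prove Theorem~\ref{thm:LS}: it is quoted as background with citations, the original argument of \cite{LS90} (completed in \cite{RS95}) running through Edelman--Greene insertion and right keys, and the crystal-theoretic proof being the content of \cite{Ass-EG}. Your outline is, in architecture, a faithful sketch of the \cite{Ass-EG} route --- the Billey--Jockusch--Stanley expansion over compatible sequences, Kashiwara-style operators that adjust the reduced word by Coxeter--Knuth moves, stabilization to the Stanley symmetric function crystal, and components indexed by increasing reduced words --- and that much is sound. It is also the same philosophy the present paper uses for the more general Kohnert setting (Theorem~\ref{thm:main}), where the analogous expansion is instead over Yamanouchi diagrams (Theorem~\ref{thm:yamanouchi}).

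However, the heart of your argument rests on a false characterization of Demazure crystals. Closure under the raising operators $\e_i$ together with a string condition does \emph{not} imply that a subset of $\B(\lambda)$ is Demazure --- not even Kashiwara's genuine string property (each $\f_i$-string meets the subset in the empty set, the full string, or its head alone), let alone your weaker ``interval from the top.'' Concretely, in the $\mathfrak{gl}_3$ crystal $\B(2,0,0)$ on one-row tableaux $ab$ with $a \leq b$, take $X = \{11, 12, 13, 22\}$. Then $X$ is connected with unique source $11$, is closed under $\e_1,\e_2$, and satisfies the string property: the $\f_1$-strings are $\{11,12,22\}$ (full), $\{13,23\}$ (meets $X$ in the head $13$), $\{33\}$ (empty), and the $\f_2$-strings are $\{11\}$, $\{12,13\}$ (full), $\{22,23,33\}$ (head $22$). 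Yet $X$ is not Demazure: the Demazure subcrystals of $\B(2,0,0)$ are $\{11\}$, $\{11,12,22\}$, and the whole crystal, and $\mathrm{ch}(X) = x_1^2 + x_1x_2 + x_2^2 + x_1x_3$ is not a Demazure character. (Your ``interval from the top'' condition would even admit $\{11,12\}$.) So the step ``verify it meets Kashiwara's characterization'' cannot be carried out as stated; one must either exhibit the component recursively as $\mathfrak{D}_{i_k} \cdots \mathfrak{D}_{i_1}\{u_\lambda\}$, or transport the Demazure structure along an explicit weight-preserving bijection onto a known Demazure crystal --- which is exactly what \cite{Ass-EG} does, and what this paper does via rectification onto semistandard key tableaux in the proof of Theorem~\ref{thm:main}. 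A secondary confusion in your last paragraph: the weight of the source (highest weight element) of a Demazure component is the dominant weight $\lambda$, not $\des(\lift(\rho))$; the key index $w \cdot \lambda$ is read off the \emph{Demazure lowest weight} element (compare Definition~\ref{def:Dlwt} and Theorem~\ref{thm:yamanouchi}), and relating that element to $\lift(\rho)$ is precisely where the nontrivial combinatorics of $\lift$ enters.
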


For example, Fig.~\ref{fig:yam} shows the increasing reduce words and their lifts for the permutation $13625847$. Thus we compute the Demazure expansion of the Schubert polynomial $\schubert_{13625847}$ from Fig.~\ref{fig:yam}, giving
\[ \schubert_{13625847} = \key_{(0,1,3,0,1,2)} + \key_{(0,2,3,0,0,2)} + \key_{(0,3,3,0,0,1)} + \key_{(0,1,4,0,1,1)} + \key_{(0,2,4,0,0,1)} . \]

\begin{figure}[ht]
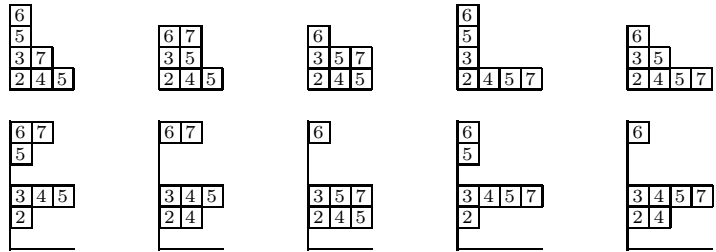

  \begin{displaymath}
    \arraycolsep=2\cellsize
    \begin{array}{ccccc}
      \tableau{6 \\ 5 \\ 3 & 7 \\ 2 & 4 & 5} &
      \tableau{\\ 6 & 7 \\ 3 & 5 \\ 2 & 4 & 5} &
      \tableau{\\ 6 \\ 3 & 5 & 7 \\ 2 & 4 & 5} &
      \tableau{6 \\ 5 \\ 3 \\ 2 & 4 & 5 & 7} &
      \tableau{\\ 6 \\ 3 & 5 \\ 2 & 4 & 5 & 7} \\ \\
      \vline\tableau{6 & 7 \\ 5 \\ \\ 3 & 4 & 5 \\ 2 \\ \\\hline} &
      \vline\tableau{6 & 7 \\ \\ \\ 3 & 4 & 5 \\ 2 & 4 \\ \\\hline} &
      \vline\tableau{6 \\ \\ \\ 3 & 5 & 7 \\ 2 & 4 & 5 \\ \\\hline} &
      \vline\tableau{6 \\ 5 \\ \\ 3 & 4 & 5 & 7 \\ 2 \\ \\\hline} &
      \vline\tableau{6 \\ \\ \\ 3 & 4 & 5 & 7 \\ 2 & 4 \\ \\\hline} 
    \end{array}
  \end{displaymath}
   \caption{\label{fig:yam}The set of increasing reduced words for $13625847$ (top) drawn as increasing Young tableaux, and their lifts (bottom) drawn as weak descent tableaux.}
\end{figure}

This result can be understood representation theoretically with Schubert functors defined by Kra\'{s}kiewicz and Pragacz \cite{KP87} used to construct Demazure modules whose characters are Schubert polynomials.

\subsection{Kohnert polynomials}
\label{sec:kohnert}

A \newword{diagram} is a finite collection of points, called the cells of the diagram, in the first quadrant of $\mathbb{Z}\times\mathbb{Z}$. We draw each cell $(c,r)$ of a diagram as the circle inscribed in the unit cell with northeastern coordinate $(c,r)$. We refer to columns and rows as the $x$- and $y$-coordinates, respectively.

Kohnert's combinatorial model for Demazure characters \cite{Koh91} is in terms of diagrams, specifically those generated by the following operation.

\begin{definition}[\cite{Koh91}]
  A \newword{Kohnert move} on a diagram selects the rightmost cell of a given row and moves the cell down within its column to the first available position below, if it exists, jumping over other cells in its way as needed. 
\label{def:kohnert_move}
\end{definition}

Given a diagram $D$, let $\KD(D)$ denote the set of diagrams that can be obtained by some sequence of Kohnert moves on $D$. Note that there might be multiple ways to obtain a diagram from different Kohnert moves of a given diagram, but each resulting diagram is included in the set exactly once.

For $D$ a diagram and $T\in\KD(D)$, notice the corresponding sets of Kohnert diagrams are nested $\KD(T) \subseteq \KD(D)$. Thus we may consider the partial order on diagrams in $\KD(D)$ given by the transitive closure of relations $T \prec S$ if $T$ can be obtained from $S$ by a Kohnert move. This partial order has a unique maximal element, namely $D$, but is not, in general, ranked nor does it have a unique minimal element. For example, see Fig.~\ref{fig:kohnert-poset}.

\begin{figure}[ht]
  \begin{center}
    \begin{tikzpicture}[xscale=1.7,yscale=1.7]
      \node at (3,6)     (A) {$\cirtab{ & & \ \\ \ & & \\ \ & \ & \ \\ & & \\\hline}$};
      \node at (3.5,5)   (B) {$\cirtab{ & & \\ \ & & \ \\ \ & \ & \ \\ & & \\\hline}$};
      \node at (2,5)     (C) {$\cirtab{ & & \ \\ & & \\ \ & \ & \ \\ \ & & \\\hline}$};
      \node at (5,5)     (D) {$\cirtab{ & & \ \\ \ & & \\ \ & \ & \\ & & \ \\\hline}$};
      \node at (4,4)     (E) {$\cirtab{ & & \\ \ & & \ \\ \ & \ & \\ & & \ \\\hline}$};
      \node at (1,4)     (F) {$\cirtab{ & & \\ & & \ \\ \ & \ & \ \\ \ & & \\\hline}$};
      \node at (2.75,4)   (G) {$\cirtab{ & & \ \\ & & \\ \ & \ & \\ \ & & \ \\\hline}$};
      \node at (5,4)     (H) {$\cirtab{ & & \ \\ \ & & \\ \ & & \\ & \ & \ \\\hline}$};
      \node at (6,3)     (I) {$\cirtab{ & & \\ \ & & \ \\ \ & & \\ & \ & \ \\\hline}$};
      \node at (3,3)     (J) {$\cirtab{ & & \\ \ & & \\ \ & \ & \ \\ & & \ \\\hline}$};
      \node at (2,3)     (K) {$\cirtab{ & & \\ & & \ \\ \ & \ & \\ \ & & \ \\\hline}$};
      \node at (5,3)   (L) {$\cirtab{ & & \ \\ \ & & \\ & & \\ \ & \ & \ \\\hline}$};
      \node at (5,2)     (M) {$\cirtab{ & & \\ \ & & \ \\ & & \\ \ & \ & \ \\\hline}$};
      \node at (6,1.5)   (N) {$\cirtab{ & & \\ \ & & \\ \ & & \ \\ & \ & \ \\\hline}$};
      \node at (1.5,1.5) (O) {$\cirtab{ & & \\ & & \\ \ & \ & \ \\ \ & & \ \\\hline}$};
      \node at (4,2)   (P) {$\cirtab{ & & \ \\ & & \\ \ & & \\ \ & \ & \ \\\hline}$};
      \node at (4.5,1)   (Q) {$\cirtab{ & & \\ \ & & \\ & & \ \\ \ & \ & \ \\\hline}$};
      \node at (3,1)     (R) {$\cirtab{ & & \\ & & \ \\ \ & & \\ \ & \ & \ \\\hline}$};
      \node at (4,0)     (S) {$\cirtab{ & & \\ & & \\ \ & & \ \\ \ & \ & \ \\\hline}$};
      \draw[thin] (A) -- (B) ;
      \draw[thin] (A) -- (C) ;
      \draw[thin] (A) -- (D) ;
      \draw[thin] (B) -- (E) ;
      \draw[thin] (B) -- (J) ;
      \draw[thin] (C) -- (F) ;
      \draw[thin] (C) -- (G) ;
      \draw[thin] (D) -- (E) ;
      \draw[thin] (D) -- (H) ;
      \draw[thin] (E) -- (I) ;
      \draw[thin] (E) -- (J) ;
      \draw[thin] (F) -- (O) ;
      \draw[thin] (F) -- (K) ;
      \draw[thin] (G) -- (K) ;
      \draw[thin] (G) -- (P) ;
      \draw[thin] (H) -- (I) ;
      \draw[thin] (H) -- (P) ;
      \draw[thin] (H) -- (L) ;
      \draw[thin] (I) -- (M) ;
      \draw[thin] (I) -- (N) ;
      \draw[thin] (J) -- (O) ;
      \draw[thin] (K) -- (O) ;
      \draw[thin] (K) -- (R) ;
      \draw[thin] (L) -- (M) ;
      \draw[thin] (L) -- (P) ;
      \draw[thin] (M) -- (Q) ;
      \draw[thin] (N) -- (S) ;
      \draw[thin] (P) -- (R) ;
      \draw[thin] (Q) -- (S) ;
      \draw[thin] (R) -- (S) ;
    \end{tikzpicture}
    \caption{\label{fig:kohnert-poset}The poset of Kohnert moves on Kohnert diagrams for the top diagram.}
  \end{center}
\end{figure}
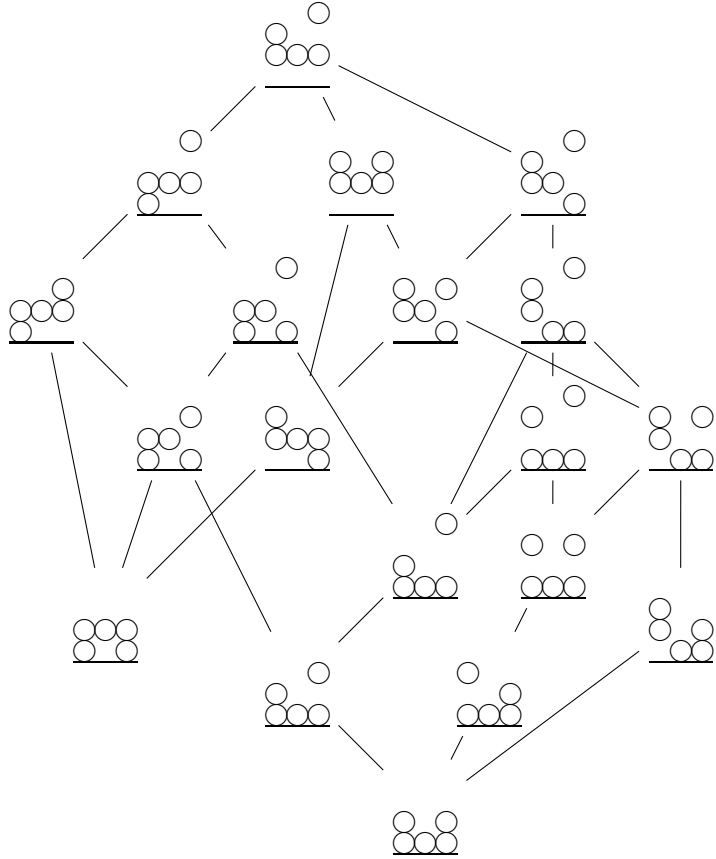

The \newword{weight} of a diagram $D$, denoted by $\wt(D)$, is the weak composition whose $i$th part is the number of cells of $D$ in row $i$.

Kohnert's model for Demazure characters began with \newword{composition diagrams}, the unique left-justified diagrams for each given weight. He proved the following.

\begin{theorem}[\cite{Koh91}]
  For a weak composition $\comp{a}$, the Demazure character $\key_{\comp{a}}$ is given by
  \[ \key_{\comp{a}} = \sum_{T \in \KD(\D(\comp{a}))} x_1^{\wt(T)_1} \cdots x_n^{\wt(T)_n}, \]
  where $\D(\comp{a})$ denotes the composition diagram of weight $\comp{a}$.
  \label{thm:kohnert}
\end{theorem}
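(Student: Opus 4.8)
The plan is to show that the Kohnert generating polynomial
\[
  \kohnert_{\comp{a}} := \sum_{T\in\KD(\D(\comp{a}))} x_1^{\wt(T)_1}\cdots x_n^{\wt(T)_n}
\]
satisfies exactly the two properties that recursively characterize the Demazure character $\key_{\comp{a}}$, and then to invoke uniqueness. The base case is that for a partition $\lambda$ we have $\KD(\D(\lambda))=\{\D(\lambda)\}$, so that $\kohnert_{\lambda}=x_1^{\lambda_1}\cdots x_n^{\lambda_n}=\key_{\lambda}$. This holds because $\lambda$ is weakly decreasing: the rightmost cell of row $i$ in $\D(\lambda)$ sits in column $\lambda_i$, and every lower row $j<i$ has $\lambda_j\ge\lambda_i$ cells and hence occupies column $\lambda_i$, so no cell of $\D(\lambda)$ has an empty position below it and no Kohnert move is available. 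The second property is the Demazure recursion: since $\key_{s_i\cdot\comp{a}}=\pi_i\,\key_{\comp{a}}$ whenever $a_i>a_{i+1}$, I will establish the matching identity $\kohnert_{s_i\cdot\comp{a}}=\pi_i\,\kohnert_{\comp{a}}$. Granting both, induction on the length of the minimal $w\in\Sym_{\infty}$ with $w\cdot\lambda=\comp{a}$, peeling off one descent and hence one $\pi_i$ at a time, gives $\kohnert_{\comp{a}}=\key_{\comp{a}}$ for every weak composition $\comp{a}$; well-definedness of the outcome is already guaranteed by the braid relations for the $\pi_i$.

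Everything thus reduces to the recursion $\kohnert_{s_i\cdot\comp{a}}=\pi_i\,\kohnert_{\comp{a}}$ for a descent $a_i>a_{i+1}$. Writing $\comp{b}=s_i\cdot\comp{a}$, we have $b_{i+1}=a_i>a_{i+1}=b_i$, so $\D(\comp{b})$ has more cells in row $i+1$ than in row $i$, and its extra rightmost cells in row $i+1$ lie above empty positions in row $i$. The first step is to verify that these cells can be dropped by Kohnert moves to recover $\D(\comp{a})$, so that $\D(\comp{a})\in\KD(\D(\comp{b}))$ and hence $\KD(\D(\comp{a}))\subseteq\KD(\D(\comp{b}))$. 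The main work is then to organize $\KD(\D(\comp{b}))$ into \emph{$i$-strings}: maximal families of diagrams that agree outside rows $i$ and $i+1$ and differ only by moving a single cell vertically between these two rows within one column. I would prove that (i) $\KD(\D(\comp{b}))$ is the disjoint union of such strings; (ii) the bottom element of each string, the one with row $i$ maximally filled, lies in $\KD(\D(\comp{a}))$, and conversely every diagram of $\KD(\D(\comp{a}))$ arises as such a bottom, necessarily with $p\ge q$ cells in rows $i$ and $i+1$; and (iii) a string whose bottom has weight contribution $x_i^{p}x_{i+1}^{q}$ (with $p\ge q$) from rows $i,i+1$ and a fixed contribution $m$ from the remaining rows has generating polynomial $m\cdot\big(x_i^{p}x_{i+1}^{q}+x_i^{p-1}x_{i+1}^{q+1}+\cdots+x_i^{q}x_{i+1}^{p}\big)=m\cdot\pi_i(x_i^{p}x_{i+1}^{q})$. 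Summing over all strings then yields $\sum \pi_i(x^{\wt})=\pi_i\,\kohnert_{\comp{a}}$, as required.

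The crux, and the step I expect to be the main obstacle, is precisely (i)--(iii): proving that the global Kohnert closure factors cleanly through these purely vertical redistributions between rows $i$ and $i+1$. The difficulty is that a Kohnert move in a single column can send a cell past both rows into a strictly lower row, altering the weights of rows other than $i$ and $i+1$, so the $i$-string equivalence does not obviously commute with the closure operation. I would attack this with a factorization statement asserting that any sequence of Kohnert moves can be rearranged so the vertical $i/(i{+}1)$ moves decouple from the moves affecting other rows, together with the claim that no string is truncated by the unavailability of a move. The latter is exactly where the hypothesis $a_i>a_{i+1}$ (equivalently, $\comp{b}$ having its larger part on top) is essential: it guarantees that each string runs through its entire $\pi_i$-range, so that the seed monomials all have $p\ge q$ and no cancellation occurs. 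Concretely, I would induct on the number of cells strictly below row $i$, tracking column by column which positions in rows $i$ and $i+1$ are free; this bookkeeping is delicate and is the point at which the classical arguments demand the greatest care.
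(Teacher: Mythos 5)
Your overall strategy (verify the base case for partitions, then establish $\kohnert_{s_i\cdot\comp{a}}=\pi_i\,\kohnert_{\comp{a}}$ at a descent and induct) is the classical route, but the proposal has a genuine gap, and one of your three key claims is false as stated. The ``conversely'' in your step (ii) --- that every diagram of $\KD(\D(\comp{a}))$ arises as the bottom of a string --- fails: the sets $\KD(\D(\comp{a}))$ can contain \emph{entire} $i$-strings, not just their row-$i$-maximal ends. Concretely, take $\comp{a}=(0,2,1)$ and $i=2$, so $a_2>a_3$. Starting from $\D(\comp{a})=\{(1,2),(2,2),(1,3)\}$, the Kohnert moves $(2,2)\to(2,1)$ and then $(1,2)\to(1,1)$ produce $T=\{(1,1),(2,1),(1,3)\}\in\KD(\D(\comp{a}))$ of weight $(2,0,1)$; but $T$ is not the bottom of its $2$-string, since its head $T'=\{(1,1),(2,1),(1,2)\}$ (weight $(2,1,0)$) also lies in $\KD(\D(\comp{a}))$ (drop $(2,2)\to(2,1)$, then $(1,3)\to(1,1)$). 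With your accounting, summing $m\cdot\pi_i(x_i^px_{i+1}^q)$ over string bottoms cannot equal $\pi_i\,\kohnert_{\comp{a}}$ via a bottoms-to-terms bijection, because $x_1^2x_3=x^{\wt(T)}$ is a term of $\kohnert_{\comp{a}}$ that is not a head monomial. The identity survives only through the correct trichotomy --- each $i$-string of the ambient crystal meets $\KD(\D(\comp{a}))$ in the empty set, its head alone, or its entirety --- together with the facts that $\pi_i$ fixes the symmetric character of a full string and annihilates the lower remainders (here $\pi_2(x_1^2x_3)=0$). Your step (i) also needs repair: the equivalence classes under vertical redistribution between rows $i$ and $i+1$ are not linear strings (two free cells in columns $c_1<c_2$ give four configurations splitting into a three-element string and a singleton), so a pairing/bracketing rule is required even to define the decomposition.

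Beyond these misstatements, the crux you explicitly defer --- that the Kohnert closure $\KD(\D(\comp{b}))$ is a union of such strings and that no string is truncated --- is precisely the hard content, and it is exactly where the historical arguments foundered (Kohnert's original sketch and Winkel's contested proofs \cite{Win99,Win02}); waving at ``a factorization statement'' and ``delicate bookkeeping'' does not discharge it. Note also that the paper offers no proof of this theorem: it is cited to \cite{Koh91}, and the modern complete proofs proceed differently, via the bijection with Kohnert tableaux of Assaf and Searles \cite{AS18} or, within this paper's framework, by combining the weight-preserving bijection $\psi:\SSKT(\comp{a})\xrightarrow{\sim}\KD(\D(\comp{a}))$ of Section~\ref{sec:crystal-koh} with Theorem~\ref{thm:crystal-dem}; the string-closure property you need is essentially Theorem~\ref{thm:closed} plus Kashiwara's Demazure string property, i.e., the machinery of Sections~\ref{sec:crystal}--\ref{sec:label}. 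As it stands, your proposal is a plausible outline whose central steps are unproven and, in the case of (ii), incorrectly formulated.
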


To each permutation $w$, we may associate the \newword{Rothe diagram} given by
\begin{equation}
  \D(w) = \{ (i,w_j) \mid i<j \mbox{ and } w_i > w_j \} .
  \label{e:rothe}
\end{equation}
Visually, we may write the word for $w$ along the $y$-axis, then place a cell in row $i$, column $w_j$ when $w_i>w_j$ for $w_j$ above row $i$. For example, see Fig.~\ref{fig:rothe}.

\begin{figure}[ht]
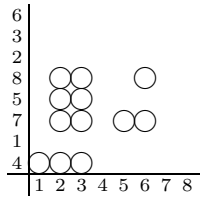

  \begin{displaymath}
    \nulltab{6 \\ 3 \\ 2 \\ 8 \\ 5 \\ 7 \\ 1 \\ 4 \\\hline }\hss%
    \vline\nulltab{ & \\ & \\ & \\ 
      & \circify{\ } & \circify{\ } & & & \circify{\ } \\
      & \circify{\ } & \circify{\ } \\
      & \circify{\ } & \circify{\ } & & \circify{\ } & \circify{\ } \\
      & \\
      \circify{\ } & \circify{\ } & \circify{\ } \\\hline 
      1 & 2 & 3 & 4 & 5 & 6 & 7 & 8}
  \end{displaymath}
   \caption{\label{fig:rothe}The Rothe diagram $\D(w)$ for the permutation $w = 41758236$.}
\end{figure}

Kohnert asserted Kohnert moves on the Rothe diagram $\D(w)$ generate the Schubert polynomial $\schubert_w$, giving the first conjectured combinatorial formula for Schubert polynomials. Winkel \cite{Win99,Win02} gives two proofs, the second an improvement on the first after it faced broad criticism, though neither the original nor revised proof is widely accepted given the intricate and opaque nature of the arguments. In \cite{Ass-KR}, we give a direct, bijective proof of Kohnert's rule for Schubert polynomials.

\begin{theorem}[\cite{Ass-KR}]
  For a permutation $w$, the Schubert polynomial $\schubert_{w}$ is given by
  \[ \schubert_{w} = \sum_{T \in \KD(\D(w))} x_1^{\wt(T)_1} \cdots x_n^{\wt(T)_n}, \]
  where $\D(w)$ denotes the Rothe diagram of $w$.
  \label{thm:kohnertrule}
\end{theorem}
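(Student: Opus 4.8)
The plan is to reduce the statement to a weight-preserving correspondence by combining the two results already in hand: the Demazure expansion of Theorem~\ref{thm:LS} together with Kohnert's rule for Demazure characters, Theorem~\ref{thm:kohnert}. Theorem~\ref{thm:LS} writes $\schubert_w = \sum_{\rho} \key_{\des(\lift(\rho))}$, summed over increasing reduced words $\rho \in \Red(w)$, and Theorem~\ref{thm:kohnert} rewrites each $\key_{\comp{a}}$ as the generating polynomial of $\KD(\D(\comp{a}))$ over the left-justified composition diagram $\D(\comp{a})$. Thus the theorem is equivalent to the generating-function identity
\[
  \sum_{T \in \KD(\D(w))} \prod_i x_i^{\wt(T)_i}
  \;=\;
  \sum_{\substack{\rho \in \Red(w) \\ \rho = \mathrm{row}(T)}}\ \sum_{S \in \KD(\D(\des(\lift(\rho))))} \prod_i x_i^{\wt(S)_i}.
\]
Because the composition diagrams on the right are left-justified while the Rothe diagram $\D(w)$ is not, the two sides consist of diagrams of genuinely different shapes; so this identity will be realized not by an identification of diagrams but by a weight-preserving bijection that rearranges cells horizontally, sorting $\KD(\D(w))$ into one Kohnert poset for each term of the Demazure expansion.

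I would prove the identity by downward induction on length, anchored at the longest permutation $w_0 = n\cdots 21$. For the base case, $\D(w_0)$ is the full staircase, which admits no Kohnert move, so $\KD(\D(w_0))$ is the single diagram of weight $(n-1,n-2,\ldots,1,0)$ and its generating polynomial is $x_1^{n-1}\cdots x_{n-1} = \schubert_{w_0}$, matching Definition~\ref{def:schubert}. For the inductive step, given $u \neq w_0$ choose an ascent $i$ (so $u_i < u_{i+1}$) and set $w = u s_i$, a permutation of length $\ell(u)+1$ with a descent at $i$, so that $\schubert_u = \partial_i \schubert_w$. Assuming the rule for $w$, I must show that applying $\partial_i$ to the generating polynomial of $\KD(\D(w))$ yields that of $\KD(\D(u))$. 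The mechanism is to partition $\KD(\D(w))$ into $\mathfrak{sl}_2$-strings governed by rows $i$ and $i+1$, on each of which $\partial_i$ acts as the familiar reflection returning the lower half of the string, and to match this output, string by string, against the Kohnert diagrams of the smaller Rothe diagram $\D(u) = \D(ws_i)$.

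The hard part is precisely this inductive step: constructing the row-$i$/row-$(i{+}1)$ string structure on $\KD(\D(w))$ and proving it compatible with the change of shape from $\D(w)$ to $\D(ws_i)$ under right multiplication by $s_i$, so that the cancellations inherent in $\partial_i$ are exactly accounted for and the surviving terms are in weight-preserving bijection with $\KD(\D(u))$. This demands a careful analysis of how Kohnert moves interact with the pair of rows being reflected, and it is here that the combinatorics of Rothe diagrams must be reconciled with the Coxeter--Knuth data underlying Theorem~\ref{thm:LS}. A fully bijective alternative, which I expect to be more illuminating though no easier, is to define an explicit left-justifying straightening $T \mapsto \rect(T)$ sending each $T \in \KD(\D(w))$ to the composition diagram recording its component, together with the record of horizontal shifts needed to invert the map; proving that this straightening produces exactly the compositions $\des(\lift(\rho))$ with the correct fibers is the same obstacle seen through a bijective rather than a recursive lens.
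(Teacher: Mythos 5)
There is a genuine gap, and it is worth being precise about where. First, note that this paper does not actually prove Theorem~\ref{thm:kohnertrule}: it imports the result from \cite{Ass-KR}, which the paper describes as a direct, bijective proof, explicitly contrasting it with the recursion-based attempts of Winkel \cite{Win99,Win02} that are ``not widely accepted given the intricate and opaque nature of the arguments.'' Your reduction to a generating-function identity via Theorems~\ref{thm:LS} and~\ref{thm:kohnert} is correct, and your base case at $w_0$ is fine, but these are the trivial parts. The inductive step --- endowing $\KD(\D(w))$ with row-$i$/row-$(i{+}1)$ strings on which $\partial_i$ acts correctly and matching the output against $\KD(\D(ws_i))$ --- is declared rather than constructed, and you concede as much. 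Since that step \emph{is} the theorem (and is exactly where Winkel's two proofs foundered), the proposal as written proves nothing beyond the base case; it is a research plan pointing at the known obstruction, not a proof.

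There is also a concrete technical problem with the mechanism you propose. The natural string structure on $\KD(\D(w))$ in rows $i,i+1$ is the one this paper constructs (Definition~\ref{def:KD-raise}), and the paper is explicit that these operators are \emph{not} Kohnert moves, that the lowering operators are only partially defined and depend on membership in $\KD(D)$ (Definition~\ref{def:KD-lower}), and that the resulting structure is a \emph{Demazure} crystal (Theorem~\ref{thm:main}), so the $i$-strings are truncated at varying lengths. Consequently $\partial_i$ does not act on each string ``as the familiar reflection returning the lower half'': $\partial_i$ annihilates complete (symmetric) strings, and on a truncated string its effect depends on the truncation point, so a string-by-string match with $\KD(\D(us_i))$ requires controlling exactly where every string is cut --- precisely the analysis you defer. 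Finally, even if you grant this paper's main results, Theorem~\ref{thm:main} only yields that $\kohnert_{\D(w)}$ is \emph{some} nonnegative sum of Demazure characters; identifying that sum with the Edelman--Greene expansion $\sum_\rho \key_{\des(\lift(\rho))}$ of Theorem~\ref{thm:LS} is an additional nontrivial identification that your bijective variant (the straightening $T \mapsto \rect(T)$ together with its fibers) names but does not carry out. In short: correct framing, correct base case, but the entire mathematical content of the theorem remains open in both of your proposed routes, and the route actually taken in \cite{Ass-KR} is bijective rather than recursive.
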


Assaf and Searles \cite[Definition~2.2]{AS19} considered applying Kohnert's model in greater generality, creating more general \emph{Kohnert polynomials} as defined below.

\begin{definition}[\cite{AS19}]
    For a diagram $D$, the \newword{Kohnert polynomial indexed by $D$} is 
  \begin{equation}
    \kohnert_{D} = \sum_{T \in \KD(D)} x_1^{\wt(T)_1} \cdots x_n^{\wt(T)_n}.
  \end{equation}
  \label{def:kohnert_poly}
\end{definition}

Assaf and Searles \cite[Theorem~3.7]{AS19} showed Kohnert polynomials expand nonnegatively into the monomial slide polynomial basis \cite[Definition~3.3]{AS17}, and they \cite[Theorem~4.14]{AS19} characterized those diagrams for which the Kohnert polynomials expand nonnegatively into the fundamental slide polynomial basis \cite[Definition~3.6]{AS17}. Furthermore, they conjectured a characterization for diagrams whose corresponding Kohnert polynomials expand nonnegatively into the key polynomial basis.

\begin{definition}[\cite{AS19}]
  A diagram $D$ is \newword{southwest} if for every pair of cells $(c_1,r_2)$, $(c_2,r_1)$ in $D$ with $r_1<r_2$ and $c_1<c_2$, the cell $(c_1,r_1)$ is also in $D$.
  \label{def:diagram-sw}
\end{definition}

Graphically, for any two cells positioned with one strictly northwest of the other, then there must a cell at their southwest corner, i.e. in the column of the northwestern cell and the row of the southeastern cell. 

\begin{conjecture}[\cite{AS19}]
  Given a southwest diagram $D$, the Kohnert polynomial $\kohnert_D$ expands non-negatively into Demazure characters.
  \label{conj:demazure}
\end{conjecture}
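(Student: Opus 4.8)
The plan is to endow $\KD(D)$ with a crystal structure whose connected components are Demazure crystals, and then read off the expansion from characters. Since $\kohnert_D = \sum_{T \in \KD(D)} x^{\wt(T)}$ is by definition the weight generating function of $\KD(D)$, once $\wt$ is identified with the crystal weight it suffices to show that $\KD(D)$ partitions, under the crystal operators, into components each of which is a Demazure crystal: by Kashiwara's theorem the character of a Demazure crystal is a single Demazure character $\key_{\comp{a}}$, so summing over components yields a nonnegative (indeed multiplicity-free) expansion $\kohnert_D = \sum \key_{\comp{a}}$.

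First I would define raising and lowering operators $\Ke_i, \Kf_i$ directly on diagrams (Definition~\ref{def:KD-raise}) by a signature rule that pairs the cells of row $i+1$ against those of row $i$, so that $\Kf_i$ moves a suitable unpaired cell from row $i+1$ down to row $i$ and $\Ke_i$ reverses it; this makes $\wt$ equivariant in the standard way. The first substantive step is to show these operators preserve $\KD(D)$ when $D$ is southwest (Theorem~\ref{thm:closed}). The difficulty is that membership in $\KD(D)$ is defined dynamically by Kohnert moves and is expensive to test directly, so I would first prove a static characterization of $\KD(D)$ in terms of admissible cell labelings, i.e. Kohnert tableaux (Theorems~\ref{thm:label-necessary} and \ref{thm:label-sufficient}), and then verify that $\Ke_i$ and $\Kf_i$ carry admissibly labeled diagrams to admissibly labeled diagrams. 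This is exactly where the southwest hypothesis enters: the guaranteed southwest corner cell forces the target column of the moving cell to support a valid labeling, so closure holds precisely (outside a few isolated cases) for southwest $D$.

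Next, given the crystal on $\KD(D)$, I must identify each connected component. The key tool is rectification, an operation $\Rect$ that pushes the cells of a diagram leftward and embeds a connected component into a genuine highest weight crystal $\B(\lambda)$ on semistandard objects while intertwining $\Ke_i, \Kf_i$ with the Kashiwara operators $\e_i, \f_i$ (Theorem~\ref{thm:commute}). Having realized the component as a subset $X \subseteq \B(\lambda)$ that is closed under the raising operators and connected to the highest weight vertex $u_\lambda$, I would then invoke the characterization of Demazure subcrystals (Kashiwara, in the operational form of Assaf--Schilling) to conclude $X = \B_w(\lambda)$ for the appropriate $w$, so that its character is $\key_{w\cdot\lambda}$ (Theorem~\ref{thm:main}). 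Summing the resulting Demazure characters over the components of $\KD(D)$ then proves the conjecture (Corollary~\ref{cor:main}).

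The main obstacle I expect is rectification itself (Theorem~\ref{thm:commute}): one must show $\Rect$ is well defined independently of the order in which cells are pushed, that it genuinely intertwines the two families of operators so that $i$-strings of diagrams map onto $i$-strings of $\B(\lambda)$, and --- the subtlest point --- that the image is an honest Demazure subset rather than merely a raising-closed subset, since raising-closure alone does not force the top-justified $i$-string segments to be coherent with a single reduced word. Controlling the highest weight vertices and the string structure, while tracking how Kohnert moves and the southwest condition constrain admissible cell positions, is where the real combinatorial work lies; the labeling characterization from the first step should be the engine that makes these verifications local and checkable.
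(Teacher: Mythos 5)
Your proposal follows essentially the same route as the paper: the same pairing-rule crystal operators on diagrams, closure of $\KD(D)$ under the southwest hypothesis (Theorem~\ref{thm:closed}), rectification intertwining the diagram operators with the Kashiwara operators (Theorem~\ref{thm:commute}), and the static Kohnert-labeling characterization of membership in $\KD(D)$ (Theorems~\ref{thm:label-necessary} and~\ref{thm:label-sufficient}) as the engine that pins each connected component down as an honest Demazure crystal $\B_w(\lambda)$ rather than a mere raising-closed subset (Theorem~\ref{thm:main}), after which the expansion is read off from characters (Corollary~\ref{cor:main}). Two minor caveats: the paper actually proves closure directly by simulating $\Ke_r$ with sequences of Kohnert and reverse Kohnert moves, deferring the labeling machinery to the Demazure identification rather than using it for closure as you propose (a workable reordering, since Lemma~\ref{lem:label-crystal} shows the operators preserve well-definedness and flaggedness of $\Label_D$); and your parenthetical claim that the expansion is multiplicity-free does not follow from the argument, since distinct components can carry Yamanouchi diagrams of equal weight, though this does not affect nonnegativity.
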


Left-justified diagrams, which by Theorem~\ref{thm:kohnert} generate Demazure characters, trivially satisfy the southwest condition as do Rothe diagrams, which by Theorem~\ref{thm:kohnertrule} generate Schubert polynomials. The diagram $D$ at the top of Fig.~\ref{fig:kohnert-poset} is also southwest, and expanding the Kohnert polynomial gives
\[ \kohnert_D = \key_{(0,3,2)} + \key_{(0,3,1,1)} . \]

We prove Conjecture~\ref{conj:demazure} by constructing for each southwest diagram $D$ a Demazure crystal whose character is the Kohnert polynomial $\kohnert_D$.

%
\section{Crystals}
%
\label{sec:crystal}

We harness the power of crystal graphs, certain combinatorial skeletons of representations, to prove positivity results for polynomials that arise as characters.

\subsection{Tableaux crystals}
\label{sec:crystal-tab}

Kashiwara \cite{Kas90,Kas91} introduced crystal bases in his study of the representation theory of quantized universal enveloping algebra $U_q(\mathfrak{g})$ for the Lie algebra $\mathfrak{g}$.
The character of a crystal coincides with the character of the representation, and so the existence of a crystal structure implies a positive expansion for the character in terms of the basis of irreducible characters.


A finite \newword{crystal} consists of the following data: a nonempty, finite set $\B$, a \newword{weight map} $\wt:\B \rightarrow \mathbb{N}^n$, and \newword{raising} and \newword{lowering operators} $\e_i, \f_i : \B \rightarrow \B \cup \{0\}$, for $1 \leq i < n$, satisfying $\e_i(b) = b'$ if and only if $\f_i(b')=b$ and in this case $\wt(b') = \wt(b) + \alpha_i$ , where $\alpha_i \in \mathbb{N}^n$ is the \newword{simple root} with $1$ in position $i$, $-1$ in position $i+1$, and $0$ elsewhere.

The \newword{crystal graph} for $\B$ is the directed, colored graph with vertex set $\B$ and a directed $i$-edge from $b$ to $\f_i(b)$ if $\f_i(b)\neq 0$, where all edges to $0$ are omitted.

Using the weight map, we define the \newword{character} of the crystal $\B$ by
\begin{equation}
  \mathrm{ch}(\B) = \sum_{b \in \B} x_1^{\wt(b)_1} x_2^{\wt(b)_2} \cdots x_{n}^{\wt(b)_{n}}.
  \label{e:char}
\end{equation}

Given a crystal $\B$, an element $b \in \B$ is a \newword{highest weight element} if $e_i(b)=0$ for all $i \geq 1$. For $\B$ connected, if $b$ is a highest weight element, then $\wt(b)$ is a dominant weight called the \newword{highest weight} of $\B$. 

Connected highest weight crystals are in one-to-one correspondence with irreducible representations of $U_q(\mathfrak{gl}_n)$, which are naturally indexed by dominant weights, or \emph{partitions} $\lambda$. The corresponding crystal bases $\B(\lambda)$ are naturally indexed by \emph{semistandard Young tableaux} of shape $\lambda$. Kashiwara and Nakashima \cite{KN94} and, independently, Littelmann \cite{Lit95} give an explicit combinatorial construction of the crystal graph on semistandard Young tableaux. 

The \newword{Young diagram} of a partition $\lambda$ has $\lambda_i$ left justified unit cells in row $i$. To differentiate between Young diagrams and diagrams used to define Kohnert polynomials, we draw square cells for the latter.

\begin{definition}
  For a partition $\lambda$, a \newword{semistandard Young tableau of shape $\lambda$} is a filling of the cells of $\lambda$ with positive integers such that entries weakly increase left to right along rows and strictly increase bottom to top along columns. 
  \label{def:SSYT}
\end{definition}

Given a partition $\lambda$, we let $\SSYT_n(\lambda)$ denote the set of semistandard Young tableaux of shape $\lambda$ with entries in $\{1,2,\ldots,n\}$. This serves as the underlying set for the crystal basis $\B(\lambda)$. We define the weight map by setting $\wt(T)_i$ to be the number of cells of $T$ with entry equal to $i$. 

\begin{definition}[\cite{KN94,Lit95}]
  For $T\in\SSYT_n(\lambda)$ and $1 \leq i < n$, the \newword{$i$-pairing} of cells of $T$ containing entries $i$ or $i+1$ is defined as follows:
  \begin{itemize}
  \item $i$-pair cells containing $i$ and $i+1$ whenever they appear in the same column,
  \item iteratively $i$-pair an unpaired $i+1$ with an unpaired $i$ to its right whenever all entries $i$ and $i+1$ that lie between are already $i$-paired.
  \end{itemize}
  \label{def:SSYT-pair}
\end{definition}



\begin{definition}[\cite{KN94,Lit95}]
  For $T\in\SSYT_n(\lambda)$ and $1 \leq i < n$, the \newword{lowering operator} $\f_i$ acts on $T$ as follows: if $T$ has no unpaired entries $i$, then $\f_i(T)=0$; else, change the rightmost unpaired $i$ to $i+1$ leaving all other entries unchanged.
  \label{def:SSYT-lower}
\end{definition}

\begin{figure}[ht]
  \begin{center}
    \begin{tikzpicture}[xscale=2.5,yscale=1.4]
      \node at (2,6)  (A) {$\tableau{ 2 & 2 \\ 1 & 1 & 1 \\\hline}$};
      \node at (2,5)  (B) {$\tableau{ 2 & 3 \\ 1 & 1 & 1 \\\hline}$};
      \node at (3,5)  (C) {$\tableau{ 2 & 2 \\ 1 & 1 & 2 \\\hline}$};
      \node at (2,4)  (D) {$\tableau{ 3 & 3 \\ 1 & 1 & 1 \\\hline}$};
      \node at (3.2,4)(E) {$\tableau{ 2 & 2 \\ 1 & 1 & 3 \\\hline}$};
      \node at (2.8,4)(F) {$\tableau{ 2 & 3 \\ 1 & 1 & 2 \\\hline}$};
      \node at (3.2,3)(G) {$\tableau{ 2 & 3 \\ 1 & 1 & 3 \\\hline}$};
      \node at (2.8,3)(H) {$\tableau{ 3 & 3 \\ 1 & 1 & 2 \\\hline}$};
      \node at (4,3)  (I){ $\tableau{ 2 & 3 \\ 1 & 2 & 2 \\\hline}$};
      \node at (3,2)  (J) {$\tableau{ 3 & 3 \\ 1 & 1 & 3 \\\hline}$};
      \node at (4.2,2)(K) {$\tableau{ 2 & 3 \\ 1 & 2 & 3 \\\hline}$};
      \node at (3.8,2)(L) {$\tableau{ 3 & 3 \\ 1 & 2 & 2 \\\hline}$};
      \node at (4,1)  (M) {$\tableau{ 3 & 3 \\ 1 & 2 & 3 \\\hline}$};
      \node at (5,1)  (N) {$\tableau{ 3 & 3 \\ 2 & 2 & 2 \\\hline}$};
      \node at (5,0)  (O) {$\tableau{ 3 & 3 \\ 2 & 2 & 3 \\\hline}$};
      \draw[thick,blue  ,->](A.south) -- (C)   node[midway,above]{$1$};
      \draw[thick,blue  ,->](B.south) -- (F)   node[midway,above]{$1$};
      \draw[thick,blue  ,->](D.south) -- (H)   node[midway,above]{$1$};
      \draw[thick,blue  ,->](F.south) -- (I)   node[midway,above]{$1$};
      \draw[thick,blue  ,->](G.south) -- (K)   node[midway,above]{$1$};
      \draw[thick,blue  ,->](H.south) -- (L)   node[midway,above]{$1$};
      \draw[thick,blue  ,->](J.south) -- (M)   node[midway,above]{$1$};
      \draw[thick,blue  ,->](L.south) -- (N)   node[midway,above]{$1$};
      \draw[thick,blue  ,->](M.south) -- (O)   node[midway,above]{$1$};
      \draw[thick,purple,->](A) -- (B)   node[midway,left ]{$2$};
      \draw[thick,purple,->](B) -- (D)   node[midway,left ]{$2$};
      \draw[thick,purple,->](C) -- (E)   node[midway,left ]{$2$};
      \draw[thick,purple,->](E) -- (G)   node[midway,left ]{$2$};
      \draw[thick,purple,->](F) -- (H)   node[midway,left ]{$2$};
      \draw[thick,purple,->](G) -- (J)   node[midway,left ]{$2$};
      \draw[thick,purple,->](I) -- (K)   node[midway,left ]{$2$};
      \draw[thick,purple,->](K) -- (M)   node[midway,left ]{$2$};
      \draw[thick,purple,->](N) -- (O)   node[midway,left ]{$2$};
    \end{tikzpicture}
    \caption{\label{fig:crystal}The crystal $\B(3,2,0)$ on $\SSYT_3(3,2)$, with crystal edges $\f_1 {\color{blue}\searrow}$, and $\f_2 {\color{purple}\downarrow}$.}
  \end{center}
\end{figure}

This implicitly defines the raising operators $\e_i$ as well by the property that for $T,T'\in\SSYT(\lambda)$, we have $\e_i(T) = T'$ if and only if $\f_i(T')=T$. See Fig.~\ref{fig:crystal}.

\begin{theorem}[\cite{KN94,Lit95}]
  The data $(\SSYT_n(\lambda),\wt,\{\e_i,\f_i\}_{1 \leq i < n})$ determines the irreducible, highest weight crystal $\B(\lambda)$ with highest weight $\lambda$.
\end{theorem}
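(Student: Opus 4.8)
The plan is to realize the combinatorially defined data on $\SSYT_n(\lambda)$ as a connected subcrystal of a tensor power of the standard crystal, and then invoke the uniqueness of connected highest weight crystals to identify it with $\B(\lambda)$. First I would establish the base case and the tensor product construction. The standard crystal $\B((1))$ has underlying set $\{1,\ldots,n\}$ with a single $i$-arrow $i \xrightarrow{\f_i} i+1$ and $\wt(i)$ the $i$th standard basis vector, which matches the definitions in the $\lambda=(1)$ case. Next I would record the tensor product rule: on $\B_1 \otimes \B_2$ one sets $\wt(b_1 \otimes b_2) = \wt(b_1)+\wt(b_2)$ and defines $\e_i,\f_i$ by a signature (bracketing) rule that reads off a sequence of signs from the two tensor factors and acts on the factor selected by the unique unbracketed sign. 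Verifying that this satisfies the crystal axiom, namely $\e_i(b)=b'$ if and only if $\f_i(b')=b$ with weight shift $\alpha_i$, is a direct check on the signature rule.

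Second, I would introduce a column reading word $w(T)$ of a tableau (reading each column top to bottom, columns left to right), which embeds $\SSYT_n(\lambda)$ into $\B((1))^{\otimes N}$ with $N=|\lambda|$. The central claim is that the $i$-pairing of Definition~\ref{def:SSYT-pair} coincides cell-for-cell with the bracketing of signs produced by the signature rule applied to $w(T)$: an $i$ and an $i+1$ in the same column are forced to bracket first, matching the first clause, while the iterative left-to-right pairing of the remaining unbracketed entries matches the second clause. Granting this, $\f_i$ on tableaux is exactly the restriction of the tensor $\f_i$ to reading words. The remaining local point is closure, that changing the rightmost unpaired $i$ to $i+1$ again yields a semistandard tableau: an unpaired $i$ can have neither an $i$ nor an $i+1$ immediately to its right (else they would pair) nor an $i+1$ immediately above it (same-column pairing), so column-strictness and row-weakness survive the change, and the same argument run in reverse handles $\e_i$ and shows the image is closed under both operators.

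Third, I would pin down the highest weight structure and invoke uniqueness. The unique element annihilated by every $\e_i$ is the superstandard tableau with every cell of row $i$ equal to $i$, whose weight is the dominant weight $\lambda$. Connectivity, that every $T$ can be raised to this element, follows from the embedding: the connected component of the superstandard tableau inside $\B((1))^{\otimes N}$ is a highest weight crystal of highest weight $\lambda$, and a weight-counting (or explicit raising) argument identifies it with the image of $\SSYT_n(\lambda)$. Since any two connected normal crystals of the same highest weight are isomorphic, by Kashiwara's uniqueness theorem or equivalently by verifying Stembridge's local axioms for simply-laced crystals, this component is $\B(\lambda)$, which completes the identification and yields irreducibility.

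I expect the main obstacle to be the precise matching of the combinatorial $i$-pairing with the tensor signature rule, together with the closure check. Both hinge on choosing the reading-word convention correctly so that it is compatible with the chosen convention for $\otimes$, and on the delicate observation that an unpaired $i$ has no obstructing neighbor, which is exactly where semistandardness is used.
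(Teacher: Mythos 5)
This theorem is imported: the paper cites \cite{KN94,Lit95} and gives no proof of its own, so there is no internal argument to compare against. Your proposal reconstructs the standard proof from the literature (as in Kashiwara--Nakashima and Littelmann, or the textbook treatments of Bump--Schilling and Hong--Kang): embed $\SSYT_n(\lambda)$ into $\B((1))^{\otimes N}$ via a reading word, match the $i$-pairing of Definition~\ref{def:SSYT-pair} with the tensor signature rule, check closure under the operators using semistandardness, locate the superstandard tableau as the unique highest weight element, and invoke uniqueness of connected normal crystals with a given highest weight. This outline is correct and is essentially the canonical route; you also correctly identify the two places where the real work lies (reading-word/tensor conventions, and the closure check).

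Two points deserve tightening. First, your parenthetical ``an unpaired $i$ can have neither an $i$ nor an $i+1$ immediately to its right (else they would pair)'' is misstated under this paper's convention: Definition~\ref{def:SSYT-pair} pairs an unpaired $i+1$ with an unpaired $i$ \emph{to its right}, so an unpaired $i$ may perfectly well have an $i+1$ in the adjacent cell to its right --- this is harmless, since after changing $i$ to $i+1$ row-weakness still holds, but the actual argument needed is that the right neighbor cannot contain $i$ (if it did, that $i$ would be paired with an $i+1$ whose column position, by the ``all entries between are paired'' clause and column-strictness, would force the changed cell to be paired, a contradiction) and the cell above cannot contain $i+1$ (same-column pairing). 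Second, the connectivity step should not rest on weight-counting, which risks circularity; the clean statement is that the \emph{only} tableau annihilated by every $\e_i$ is the superstandard one, whence the image of $\SSYT_n(\lambda)$, being closed under both operators, is a single connected component of $\B((1))^{\otimes N}$. Uniqueness then applies because that ambient tensor power is normal (it is the crystal of $V^{\otimes N}$, or one verifies Stembridge's axioms are preserved under tensor product, which is itself a nontrivial lemma) --- you flag this correctly, but note that seminormality alone would not suffice for the identification with $\B(\lambda)$.
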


\subsection{Demazure crystals}
\label{sec:crystal-dem}

Littelmann \cite{Lit95} conjectured a crystal structure for Demazure modules as certain truncations of highest weight crystals. Kashiwara \cite{Kas93} proved the result, giving a new proof of the Demazure character formula.

Given a subset $X \subseteq \B(\lambda)$, define \newword{Demazure operators} $\mathfrak{D}_i$ by
\begin{equation}
  \mathfrak{D}_i (X) = \{ b \in \B(\lambda) \mid \e_i^k(b) \in X \mbox{ for some } k \geq 0 \},
  \label{e:D}
\end{equation}
where $\e_i$ denotes the raising operator for $\B$. As with the divided difference operators on polynomials, these operators on crystals satisfy the commutation and Yang--Baxter relations for the symmetric group, and so we may define
\begin{equation}
  \mathfrak{D}_w = \mathfrak{D}_{\rho_\ell} \cdots \mathfrak{D}_{\rho_1}
  \label{e:Dw}
\end{equation}
for any reduced word $\rho = (\rho_\ell,\ldots,\rho_1)$ for the permutation $w$.

\begin{definition}[\cite{Lit95}]
  For $\lambda$ a partition of length $n$ and $w$ a permutation of $\mathcal{S}_n$, the \newword{Demazure crystal} $\B_w(\lambda)$ is given by
  \begin{equation}
    \B_w(\lambda) = \mathfrak{D}_{w} \{ u_\lambda \},
    \label{e:BwL}
  \end{equation}
  where $u_\lambda$ is the highest weight element in $\B(\lambda)$.
  \label{def:BwL}
\end{definition}

Notice the crystal $\B_{\mathrm{id}}(\lambda)$ consists solely of the highest weight element, and, for $w_0^{(n)}$ the longest element of $\Sym_n$, we have $\B_{w_0^{(n)}}(\lambda) = \B(\lambda)$. Fig.~\ref{fig:demazure-crystal} shows the Demazure crystals $\B_{123}(3,2,0)$, $\B_{132}(3,2,0)$ and $\B_{312}(3,2,0)$, which can be constructed from the crystal $\B_{321}(3,2,0) = \B(3,2,0)$ shown in Fig.~\ref{fig:crystal}.

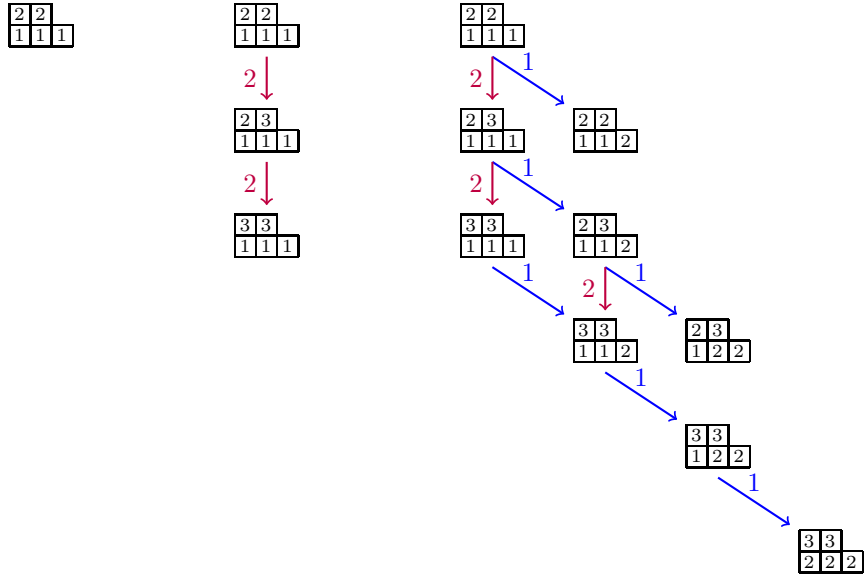
\begin{figure}[ht]
  \begin{center}
    \begin{tikzpicture}[xscale=1.5,yscale=1.4]
      \node at (-1,6) (AAA){$\tableau{ 2 & 2 \\ 1 & 1 & 1 \\\hline}$};
      \node at (1,6)  (AA) {$\tableau{ 2 & 2 \\ 1 & 1 & 1 \\\hline}$};
      \node at (1,5)  (BB) {$\tableau{ 2 & 3 \\ 1 & 1 & 1 \\\hline}$};
      \node at (1,4)  (DD) {$\tableau{ 3 & 3 \\ 1 & 1 & 1 \\\hline}$};
      \draw[thick,purple,->](AA) -- (BB)   node[midway,left ]{$2$};
      \draw[thick,purple,->](BB) -- (DD)   node[midway,left ]{$2$};
      \node at (3,6)  (A) {$\tableau{ 2 & 2 \\ 1 & 1 & 1 \\\hline}$};
      \node at (3,5)  (B) {$\tableau{ 2 & 3 \\ 1 & 1 & 1 \\\hline}$};
      \node at (4,5)  (C) {$\tableau{ 2 & 2 \\ 1 & 1 & 2 \\\hline}$};
      \node at (3,4)  (D) {$\tableau{ 3 & 3 \\ 1 & 1 & 1 \\\hline}$};
      \node at (4,4)(F)   {$\tableau{ 2 & 3 \\ 1 & 1 & 2 \\\hline}$};
      \node at (4,3)(H)   {$\tableau{ 3 & 3 \\ 1 & 1 & 2 \\\hline}$};
      \node at (5,3)  (I) {$\tableau{ 2 & 3 \\ 1 & 2 & 2 \\\hline}$};
      \node at (5,2)(L)   {$\tableau{ 3 & 3 \\ 1 & 2 & 2 \\\hline}$};
      \node at (6,1)  (N) {$\tableau{ 3 & 3 \\ 2 & 2 & 2 \\\hline}$};
      \draw[thick,blue  ,->](A.south) -- (C)   node[midway,above]{$1$};
      \draw[thick,blue  ,->](B.south) -- (F)   node[midway,above]{$1$};
      \draw[thick,blue  ,->](D.south) -- (H)   node[midway,above]{$1$};
      \draw[thick,blue  ,->](F.south) -- (I)   node[midway,above]{$1$};
      \draw[thick,blue  ,->](H.south) -- (L)   node[midway,above]{$1$};
      \draw[thick,blue  ,->](L.south) -- (N)   node[midway,above]{$1$};
      \draw[thick,purple,->](A) -- (B)   node[midway,left ]{$2$};
      \draw[thick,purple,->](B) -- (D)   node[midway,left ]{$2$};
      \draw[thick,purple,->](F) -- (H)   node[midway,left ]{$2$};
    \end{tikzpicture}
    \caption{\label{fig:demazure-crystal}The Demazure crystals $\B_{123}(3,2,0)$ (left), $\B_{132}(3,2,0)$ (middle) and $\B_{312}(3,2,0)$ (right) on semistandard Young tableaux with edges $\f_1 {\color{blue}\searrow}$, $\f_2 {\color{purple}\downarrow}$.}
  \end{center}
\end{figure}

\begin{theorem}[\cite{Kas93}]
  The character of the Demazure crystal $\B_w(\lambda)$ is the Demazure character $\key_{w \cdot \lambda}$.
\end{theorem}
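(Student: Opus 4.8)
The plan is to induct on the Coxeter length $\ell(w)$, running the induction along a reduced word so that the recursions $\mathfrak{D}_w = \mathfrak{D}_{\rho_\ell}\cdots\mathfrak{D}_{\rho_1}$ and $\pi_w = \pi_{\rho_\ell}\cdots\pi_{\rho_1}$ stay in lockstep. For the base case $w=\mathrm{id}$ we have $\B_{\mathrm{id}}(\lambda)=\{u_\lambda\}$, and since $\wt(u_\lambda)=\lambda$ is dominant its character is the single monomial $x_1^{\lambda_1}\cdots x_n^{\lambda_n}=\key_\lambda$. For the inductive step write $w=s_iv$ with $\ell(w)=\ell(v)+1$; the braid relations for the $\mathfrak{D}_i$ asserted above guarantee that $\B_w(\lambda)=\mathfrak{D}_i\B_v(\lambda)$ is independent of the chosen word, and likewise $\pi_w=\pi_i\pi_v$. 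Granting the inductive hypothesis $\mathrm{ch}(\B_v(\lambda))=\key_{v\cdot\lambda}=\pi_v(x_1^{\lambda_1}\cdots x_n^{\lambda_n})$, everything reduces to the single-operator identity
\begin{equation}
  \mathrm{ch}(\mathfrak{D}_i X)=\pi_i\bigl(\mathrm{ch}(X)\bigr), \qquad X=\B_v(\lambda),
\end{equation}
from which $\mathrm{ch}(\B_w(\lambda))=\pi_i\pi_v(x_1^{\lambda_1}\cdots x_n^{\lambda_n})=\pi_w(x_1^{\lambda_1}\cdots x_n^{\lambda_n})=\key_{w\cdot\lambda}$ follows at once.

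To prove the single-operator identity I would decouple it across $i$-strings. Write an $i$-string as $S=\{b_0,\f_i b_0,\ldots,\f_i^m b_0\}$, where $b_0$ is its $i$-highest weight element; the weights along $S$ are $\wt(b_0)-j\alpha_i$ for $0\le j\le m$, with $\wt(b_0)_i-\wt(b_0)_{i+1}=m\ge 0$. Because $b\in\mathfrak{D}_iX$ exactly when $\e_i^k(b)\in X$ for some $k\ge 0$, the operator $\mathfrak{D}_i$ acts on $S$ by replacing $X\cap S$ with the segment running from the topmost element of $X\cap S$ down to $\f_i^m b_0$. The identity then follows from two elementary facts about the action of $\pi_i$ on the monomial content of a single string, obtained directly from $\pi_i(x^\mu)=\partial_i(x_i x^\mu)$: first, $\pi_i\bigl(x^{\wt(b_0)}\bigr)=\sum_{j=0}^{m}x^{\wt(b_0)-j\alpha_i}$ is exactly the character of the full string $S$; and second, $\pi_i$ fixes the character of a full string, since $\pi_i^2=\pi_i$. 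Thus on any string that $X$ meets in either its top element alone or in the full string, both sides of the identity agree, the empty intersection being trivial.

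The genuine content, and the step I expect to be the main obstacle, is the structural claim that $X=\B_v(\lambda)$ meets every $i$-string in only one of these three ways: emptily, in the single top element $b_0$, or in the full string $S$. Without this, $\pi_i$ applied to the character of a proper partial string, say $\{b_0,\f_i b_0\}$ sitting inside a longer $S$, overcounts the interior weights and the identity fails; so the theorem genuinely rests on showing Demazure crystals have no ``ragged'' $i$-strings. This is precisely the crystal-theoretic heart of Kashiwara's argument: one shows, by a second induction on $\ell(v)$ together with an analysis of how $\mathfrak{D}_j$ interacts with $i$-strings, that each $\B_v(\lambda)$ is assembled entirely from full and top-only $i$-strings, using the compatibility of the crystal operators with the defining recursion and, ultimately, the existence of the underlying Demazure module. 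Once this structural lemma is in hand, the per-string computation of the previous paragraph completes the induction and yields $\mathrm{ch}(\B_w(\lambda))=\key_{w\cdot\lambda}$.
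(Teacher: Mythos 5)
There is nothing in the paper to compare your argument against: the paper does not prove this statement but imports it wholesale from Kashiwara \cite{Kas93}, using it as a black box (indeed the paper even asserts the braid relations for the $\mathfrak{D}_i$ without proof, which you correctly note you are entitled to use). Judged on its own merits, your reduction is the standard skeleton of Kashiwara's proof, and the parts you carry out are correct: the induction along a reduced word, the decomposition of $\B(\lambda)$ into $i$-strings, the identity $\pi_i\bigl(x^{\wt(b_0)}\bigr)=\sum_{j=0}^{m}x^{\wt(b_0)-j\alpha_i}=\mathrm{ch}(S)$ for a string $S$ of length $m=\wt(b_0)_i-\wt(b_0)_{i+1}$, the observation that $\mathfrak{D}_i$ replaces $X\cap S$ by the segment from the top element of $X\cap S$ down to $\f_i^m b_0$, and the use of $\pi_i^2=\pi_i$ on full strings. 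One small definitional wrinkle you elide: the paper's Definition~\ref{def:key} defines $\key_{\comp{a}}$ via the \emph{minimal} length $w$ with $w\cdot\lambda=\comp{a}$, so for non-minimal $w$ the final identification $\pi_w(x^\lambda)=\key_{w\cdot\lambda}$ needs the one-line remark that $\pi_u(x^\lambda)=x^\lambda$ for $u$ in the stabilizer of $\lambda$ (a standard parabolic, so its reduced words use only $s_i$ with $\lambda_i=\lambda_{i+1}$, each acting on $x^\lambda$ as the identity).

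The genuine gap is the one you yourself flag: the string property --- that $\B_v(\lambda)$ meets each $i$-string emptily, in its top element alone, or fully --- is asserted, not proved, and it \emph{is} the theorem; once it is granted, your per-string computation shows the rest is routine. Your sketch of how to get it (``a second induction on $\ell(v)$ together with an analysis of how $\mathfrak{D}_j$ interacts with $i$-strings'') understates the difficulty: the operator $\mathfrak{D}_i$ manifestly creates full $i$-strings, but applying $\mathfrak{D}_j$ for $j\neq i$ can a priori truncate or raggedly extend an $i$-string, and no purely formal manipulation of the recursion $\B_{s_jv}(\lambda)=\mathfrak{D}_j\B_v(\lambda)$ rules this out --- the paper's axioms for abstract crystals certainly do not. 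Kashiwara closes this by genuinely module-theoretic input (compatibility of the global/crystal basis of $V(\lambda)$ with the Demazure modules $V_w(\lambda)$), and Littelmann's path model gives an alternative combinatorial route; either way, new structure beyond what you invoke is required. So your proposal is a correct and well-organized reduction of the character formula to the string-property lemma, but it is not a proof of the theorem, since the lemma carrying all the content is left to citation.
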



Generalizing the tableaux crystals, Assaf and Schilling \cite[Definition~3.7]{ASc18} defined an explicit Demazure crystal structure on semistandard \emph{key tableaux} \cite{Ass18}, objects that correspond to Mason's semi-skyline augmented fillings \cite{Mas09}.

\begin{definition}[\cite{Ass18}]
  For a weak composition $\comp{a}$, a \newword{semistandard key tableau of shape $\comp{a}$} is a filling of the cells of the composition diagram for $\comp{a}$ with positive integers such that
  \begin{enumerate}
  \item entries weakly decrease left to right along rows, 
  \item if $i$ is above $k$ in the same column with $i<k$, then there exists $j>i$ to the right of $k$ in the same row, and
  \item entries in row $i$ are at most $i$.
  \end{enumerate}
  \label{def:SSKT}
\end{definition}

Given a weak composition $\comp{a}$, we let $\SSKT(\comp{a})$ denote the set of semistandard key tableaux of shape $\comp{a}$. Notice by the third condition of Definition~\ref{def:SSKT}, we do not need to restrict the values of the entries.

This serves as the underlying set for the crystal basis $\B_w(\lambda)$, where $\comp{a} = w \cdot \lambda$. We define the weight map as with tableaux by setting $\wt(T)_i$ to be the number of cells of $T$ with entry equal to $i$. We alter the $i$-pairing rule from Definition~\ref{def:SSYT-pair} to account for the change in rows from increasing to decreasing.

\begin{definition}[\cite{ASc18}]
  For $T\in\SSKT(\comp{a})$ and $1 \leq i < n$, the \newword{$i$-pairing} of cells of $T$ containing entries $i$ or $i+1$ is defined as follows:
  \begin{itemize}
  \item $i$-pair cells containing $i$ and $i+1$ whenever they appear in the same column,
  \item iteratively $i$-pair an unpaired $i$ with an unpaired $i+1$ to its right whenever all entries $i$ and $i+1$ that lie between are already $i$-paired.
  \end{itemize}
  \label{def:SSKT-pair}
\end{definition}

Our presentation of the pairing rule and raising operators is taken from \cite{AG} where it is shown to be equivalent to that in \cite{ASc18}.

\begin{definition}[\cite{ASc18}]
  For $T\in\SSKT(\comp{a})$ and $1 \leq i < n$, the \newword{raising operator} $\e_i$ acts on $T$ as follows: if $T$ does not have any unpaired entry $i+1$, then $e_i(T)=0$; otherwise, $e_i$ changes the rightmost unpaired $i+1$ to $i$ and swaps the entries $i$ and $i+1$ in each of the consecutive columns left of this entry that have an $i+1$ in the same row and an $i$ above.
  \label{def:SSKT-raise}
\end{definition}

\begin{figure}[ht]
  \begin{center}
    \begin{tikzpicture}[xscale=1.5,yscale=1.4]
      \node at (-1,6)  (AAA) {$\vline\tableau{ \\ 2 & 2 \\ 1 & 1 & 1 \\\hline}$};
      \node at (1,6)  (AA)   {$\vline\tableau{ 2 & 2 \\ \\ 1 & 1 & 1 \\\hline}$};
      \node at (1,5)  (BB)   {$\vline\tableau{ 3 & 2 \\ \\ 1 & 1 & 1 \\\hline}$};
      \node at (1,4)  (DD)   {$\vline\tableau{ 3 & 3 \\ \\ 1 & 1 & 1 \\\hline}$};
      \draw[thick,purple,->](AA) -- (BB)   node[midway,left ]{$2$};
      \draw[thick,purple,->](BB) -- (DD)   node[midway,left ]{$2$};
      \node at (3,6)  (A) {$\vline\tableau{ 2 & 2 \\ 1 & 1 & 1 \\ & \\\hline}$};
      \node at (3,5)  (B) {$\vline\tableau{ 3 & 2 \\ 1 & 1 & 1 \\ & \\\hline}$};
      \node at (4,5)  (C) {$\vline\tableau{ 1 & 1 \\ 2 & 2 & 2 \\ & \\\hline}$};
      \node at (3,4)  (D) {$\vline\tableau{ 3 & 3 \\ 1 & 1 & 1 \\ & \\\hline}$};
      \node at (4,4)  (F) {$\vline\tableau{ 3 & 2 \\ 2 & 1 & 1 \\ & \\\hline}$};
      \node at (4,3)  (H) {$\vline\tableau{ 3 & 3 \\ 2 & 1 & 1 \\ & \\\hline}$};
      \node at (5,3)  (I) {$\vline\tableau{ 3 & 1 \\ 2 & 2 & 2 \\ & \\\hline}$};
      \node at (5,2)  (L) {$\vline\tableau{ 3 & 3 \\ 2 & 2 & 1 \\ & \\\hline}$};
      \node at (6,1)  (N) {$\vline\tableau{ 3 & 3 \\ 2 & 2 & 2 \\ & \\\hline}$};
      \draw[thick,blue  ,->](A.south) -- (C)   node[midway,above]{$1$};
      \draw[thick,blue  ,->](B.south) -- (F)   node[midway,above]{$1$};
      \draw[thick,blue  ,->](D.south) -- (H)   node[midway,above]{$1$};
      \draw[thick,blue  ,->](F.south) -- (I)   node[midway,above]{$1$};
      \draw[thick,blue  ,->](H.south) -- (L)   node[midway,above]{$1$};
      \draw[thick,blue  ,->](L.south) -- (N)   node[midway,above]{$1$};
      \draw[thick,purple,->](A) -- (B)   node[midway,left ]{$2$};
      \draw[thick,purple,->](B) -- (D)   node[midway,left ]{$2$};
      \draw[thick,purple,->](F) -- (H)   node[midway,left ]{$2$};
    \end{tikzpicture}
    \caption{\label{fig:demazure-crystal-key}The Demazure crystals $\B_{123}(3,2,0)$ (left), $\B_{132}(3,2,0)$ (middle) and $\B_{312}(3,2,0)$ (right) on semistandard key tableaux with edges $\f_1 {\color{blue}\searrow}$, $\f_2 {\color{purple}\downarrow}$.}
  \end{center}
\end{figure}
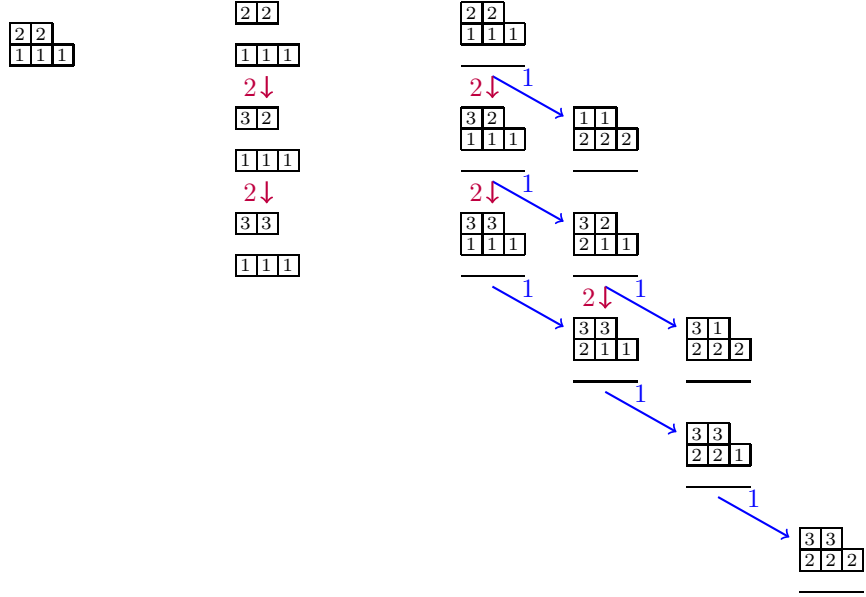

This implicitly defines the lowering operators $\f_i$. One advantage to using the semistandard key tableaux paradigm for Demazure crystals is that the Demazure truncation, instances where $\f_i(T)=0$ in the Demazure case but not in the full crystal, becomes evident from condition (3) of Definition~\ref{def:SSKT}.

\begin{theorem}[\cite{ASc18}]
  The data $(\SSKT(\comp{a}),\wt,\{\e_i,\f_i\}_{1 \leq i < n})$ determines the Demazure crystal $\B_w(\lambda)$ where $\comp{a} = w \cdot \lambda$.
  \label{thm:crystal-dem}
\end{theorem}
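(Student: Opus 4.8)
The plan is to realize $\SSKT(\comp{a})$ inside the highest weight crystal $\B(\lambda)$ through an explicit weight-preserving map, and then to identify the image with $\B_w(\lambda)$ by induction on $\ell(w)$ using the recursion built from the operators $\mathfrak{D}_i$. Concretely, I would define $\Phi \colon \SSKT(\comp{a}) \to \SSYT(\lambda)$ by sorting the entries of each column of $T$ into increasing order read bottom to top and then bottom-justifying the resulting columns to the partition shape $\lambda = \mathrm{sort}(\comp{a})$. Since the column lengths of the composition diagram of $\comp{a}$ already agree with those of the Young diagram of $\lambda$, this produces a genuine filling of $\lambda$; sorting creates and destroys no entries, so $\Phi$ is weight-preserving, and one checks directly that the output is column-strict and weakly increasing along rows, hence lies in $\SSYT(\lambda)$. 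A short argument recovering $T$ from its multiset of columns together with the shape $\comp{a}$ shows $\Phi$ is injective.

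The technical heart is to prove that $\Phi$ intertwines the raising operators, i.e.\ $\Phi(\e_i(T)) = \e_i(\Phi(T))$ whenever $\e_i(T) \neq 0$. Both the key-tableau pairing of Definition~\ref{def:SSKT-pair} and the Young-tableau pairing of Definition~\ref{def:SSYT-pair} pair cells containing $i$ with cells containing $i+1$, and the point is that after columns are sorted these two pairings agree. I would verify this by tracking a single $i$-string: the consecutive columns that get swapped by the $\SSKT$ raising operator of Definition~\ref{def:SSKT-raise} are precisely those carrying an $i$-paired $(i,i+1)$ that must be reordered, and these swaps become invisible after applying $\Phi$, leaving only the change of the unique rightmost unpaired $i+1$ to $i$ prescribed by Definition~\ref{def:SSYT-lower}. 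This reconciliation of the nonlocal column-swapping rule on key tableaux with the purely local rule on Young tableaux is the step I expect to be the main obstacle, as it requires a careful cell-by-cell analysis of how sorting interacts with the cascade of swaps.

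With the embedding established, I would identify $\mathrm{image}(\Phi)$ with $\B_w(\lambda)$ by induction on $\ell(w)$. In the base case $w = \mathrm{id}$ the composition $\comp{a} = \lambda$ is a partition and conditions (1)--(3) of Definition~\ref{def:SSKT} leave only the canonical filling $u_\lambda$, with row $i$ consisting entirely of the entry $i$, matching $\B_{\mathrm{id}}(\lambda) = \{u_\lambda\}$. For the inductive step choose $i$ with $a_i < a_{i+1}$ and set $\comp{b} = s_i \cdot \comp{a}$, so that $w = s_i w'$ with $\ell(w) = \ell(w')+1$ and $\comp{b} = w' \cdot \lambda$. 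By induction $\Phi(\SSKT(\comp{b})) = \B_{w'}(\lambda)$, and since $\mathfrak{D}_i \B_{w'}(\lambda) = \B_w(\lambda)$ it suffices to prove $\Phi(\SSKT(\comp{a})) = \mathfrak{D}_i(\Phi(\SSKT(\comp{b})))$. This reduces to an explicit bijection between $\SSKT(\comp{a})$ and the $i$-strings hanging below $\SSKT(\comp{b})$: the elements of $\SSKT(\comp{b})$ embed as the $i$-highest key tableaux, and repeatedly applying the lowering operator $\f_i$ fills out exactly those fillings of the rearranged shape $\comp{a}$ that are permitted by condition (3). The cap of condition (3), that entries in row $i$ are at most $i$, is what truncates each $i$-string at the correct length, making the Demazure truncation visible directly on key tableaux.

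Assembling these pieces, $\Phi$ is an injective, weight-preserving, $\e_i$-intertwining map with image $\B_w(\lambda)$, hence a crystal isomorphism $\SSKT(\comp{a}) \cong \B_w(\lambda)$; in particular the character of $\SSKT(\comp{a})$ equals $\key_{w \cdot \lambda}$. The two delicate points to watch are the compatibility of the pairing rules under column sorting (the obstacle flagged above) and, in the inductive step, the verification that the column-swap raising rule never moves an entry outside the range allowed by condition (3) except exactly when the corresponding element of $\SSYT(\lambda)$ leaves $\B_w(\lambda)$.
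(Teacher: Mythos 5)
The paper you were given does not prove this theorem at all --- it is imported verbatim from \cite{ASc18} --- so your proposal must stand on its own, and it fails at the very first step: the map $\Phi$ is not well-defined. Take $\comp{a}=(0,3,2)$, so $\lambda=(3,2,0)$, and let $T\in\SSKT(\comp{a})$ have row $2$ equal to $(2,2,2)$ and row $3$ equal to $(3,1)$; this $T$ appears explicitly in Figure~\ref{fig:demazure-crystal-key} as the element of $\B_{312}(3,2,0)$ of weight $(1,3,1)$. Its column entry sets are $\{2,3\}$, $\{1,2\}$, $\{2\}$, so sorting each column to increase bottom to top and bottom-justifying to shape $(3,2)$ produces bottom row $(2,1,2)$, which is not weakly increasing: $\Phi(T)\notin\SSYT_3(\lambda)$, contradicting your claim that one ``checks directly'' the output is semistandard. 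Worse, the defect cannot be repaired by any map that preserves column multisets: the weights occurring in $\B_{312}(3,2,0)$ are multiplicity-free, so the unique isomorphism matching Figures~\ref{fig:demazure-crystal} and~\ref{fig:demazure-crystal-key} must send this $T$ to the semistandard Young tableau with rows $(1,2,2)$ and $(2,3)$, whose columns are $\{1,2\}$, $\{2,3\}$, $\{2\}$ --- the contents of columns $1$ and $2$ are interchanged relative to $T$. Thus the true intertwiner necessarily moves entries between columns, which simultaneously invalidates your injectivity argument (recovering $T$ from its multiset of columns) and your central claim that the cascade of swaps in Definition~\ref{def:SSKT-raise} becomes ``invisible'' after sorting: those swaps do preserve each column's multiset, and that is precisely why a column-sorting map cannot see what the operators actually do. Note also that the nearest explicit map in the paper, $\varphi\circ\psi$ (Definition~\ref{def:embed-SSYT} composed with the proposition of Section~\ref{sec:crystal-koh}), sorts columns only after complementing entries $r\mapsto n-r+1$ and is weight-\emph{reversing}, so it cannot play the role of your $\Phi$ either.

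Your remaining architecture --- embed into $\B(\lambda)$, then induct on $\ell(w)$ via $\B_{s_iw'}(\lambda)=\mathfrak{D}_i\B_{w'}(\lambda)$, with condition (3) of Definition~\ref{def:SSKT} effecting the Demazure truncation --- is a reasonable outline and broadly the shape of the argument one expects for this theorem. But both steps you yourself flag as delicate are exactly where the content lies, and neither survives as written: the pairing-compatibility analysis has no valid map to be compatible with, and the asserted bijection between $\SSKT(\comp{a})$ and the $i$-strings hanging below the image of $\SSKT(\comp{b})$ is stated, not proved. To salvage the plan you would need an intertwiner that genuinely redistributes entries among columns --- for instance the one forced abstractly by sending highest weight to highest weight and propagating along lowering operators, or the rectification machinery of Sections~\ref{sec:proof} and~\ref{sec:label} specialized to left-justified diagrams --- and then redo the string-by-string analysis against that map rather than against column sorting.
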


\subsection{Diagram crystals}
\label{sec:crystal-koh}

Given a weak composition $\comp{a}$ and the partition $\lambda$ to which it sorts, Assaf and Searles \cite[Definition~4.5]{AS18} give an injective map from Kohnert diagrams of $\comp{a}$ to semistandard Young tableaux of shape $\lambda$. 

\begin{definition}[\cite{AS18}]
  Given a weak composition $\comp{a}$ of length $n$ that sorts to the partition $\lambda$, define $\varphi: \KD(\D(\comp{a})) \rightarrow \SSYT_n(\lambda)$ by replacing each cell in row $i$ with entry $n-i+1$ and sorting the columns to increase bottom to top.
  \label{def:embed-SSYT}
\end{definition}

Inspired by this, we construct Demazure crystal operators on diagrams using the following pairing rule.

\begin{definition}
  For $T$ a diagram and $i \geq 1$ an integer, the \newword{$i$-pairing} of cells of $T$ in rows $i$ and $i+1$ is defined as follows:
  \begin{itemize}
  \item $i$-pair cells in rows $i$ and $i+1$ whenever they appear in the same column,
  \item iteratively $i$-pair an unpaired cell in row $i$ with an unpaired cell in row $i+1$ to its right whenever all cells in rows $i$ and $i+1$ that lie strictly between them are already $i$-paired.
  \end{itemize}
  \label{def:KD-pair}
\end{definition}

Given the asymmetry between raising and lowering operators for Demazure crystals, it is more natural to define the raising operators on diagrams directly and the lowering operators implicitly.

\begin{definition}
  For $T$ a diagram and $i \geq 1$ an integer, the \newword{raising operator} $\Ke_i$ acts on $T$ as follows: if $T$ has no unpaired cell in row $i+1$, then $\Ke_i(T)=0$; else, move the rightmost unpaired cell in row $i+1$ down to row $i$, staying within its column, leaving all other cells unmoved.
  \label{def:KD-raise}
\end{definition}

\begin{figure}[ht]
  \begin{center}
    \begin{tikzpicture}[xscale=1.6,yscale=1.6]
      \node at (9,0)  (A) {$\cirtab{ & & \ \\ \ & & \\ \ & \ & \ \\ & & \\\hline}$};
      \node at (5,1)  (B) {$\cirtab{ & & \\ \ & & \ \\ \ & \ & \ \\ & & \\\hline}$};
      \node at (8,2)  (C) {$\cirtab{ & & \ \\ & & \\ \ & \ & \ \\ \ & & \\\hline}$};
      \node at (8,1)  (D) {$\cirtab{ & & \ \\ \ & & \\ \ & \ & \\ & & \ \\\hline}$};
      \node at (4,2)  (E) {$\cirtab{ & & \\ \ & & \ \\ \ & \ & \\ & & \ \\\hline}$};
      \node at (9,3)  (F) {$\cirtab{ & & \\ & & \ \\ \ & \ & \ \\ \ & & \\\hline}$};
      \node at (7,3)  (G) {$\cirtab{ & & \ \\ & & \\ \ & \ & \\ \ & & \ \\\hline}$};
      \node at (7,2)  (H) {$\cirtab{ & & \ \\ \ & & \\ \ & & \\ & \ & \ \\\hline}$};
      \node at (3,3)  (I) {$\cirtab{ & & \\ \ & & \ \\ \ & & \\ & \ & \ \\\hline}$};
      \node at (4,3)  (J) {$\cirtab{ & & \\ \ & & \\ \ & \ & \ \\ & & \ \\\hline}$};
      \node at (8,4)  (K) {$\cirtab{ & & \\ & & \ \\ \ & \ & \\ \ & & \ \\\hline}$};
      \node at (6,3)  (L) {$\cirtab{ & & \ \\ \ & & \\ & & \\ \ & \ & \ \\\hline}$};
      \node at (2,4)  (M) {$\cirtab{ & & \\ \ & & \ \\ & & \\ \ & \ & \ \\\hline}$};
      \node at (3,4)  (N) {$\cirtab{ & & \\ \ & & \\ \ & & \ \\ & \ & \ \\\hline}$};
      \node at (3,5)  (O) {$\cirtab{ & & \\ & & \\ \ & \ & \ \\ \ & & \ \\\hline}$};
      \node at (6,4)  (P) {$\cirtab{ & & \ \\ & & \\ \ & & \\ \ & \ & \ \\\hline}$};
      \node at (2,5)  (Q) {$\cirtab{ & & \\ \ & & \\ & & \ \\ \ & \ & \ \\\hline}$};
      \node at (7,5)  (R) {$\cirtab{ & & \\ & & \ \\ \ & & \\ \ & \ & \ \\\hline}$};
      \node at (2,6)  (S) {$\cirtab{ & & \\ & & \\ \ & & \ \\ \ & \ & \ \\\hline}$};
      \draw[thick,blue  ,->](A) -- (D)   node[midway,above]{$1$};
      \draw[thick,blue  ,->](D) -- (H)   node[midway,above]{$1$};
      \draw[thick,blue  ,->](H) -- (L)   node[midway,above]{$1$};
      \draw[thick,blue  ,->](C) -- (G)   node[midway,above]{$1$};
      \draw[thick,blue  ,->](G) -- (P)   node[midway,above]{$1$};
      \draw[thick,blue  ,->](F) -- (K)   node[midway,above]{$1$};
      \draw[thick,blue  ,->](K) -- (R)   node[midway,above]{$1$};
      \draw[thick,blue  ,->](B) -- (E)   node[midway,above]{$1$};
      \draw[thick,blue  ,->](E) -- (I)   node[midway,above]{$1$};
      \draw[thick,blue  ,->](I) -- (M)   node[midway,above]{$1$};
      \draw[thick,blue  ,->](J) -- (N)   node[midway,above]{$1$};
      \draw[thick,blue  ,->](N) -- (Q)   node[midway,above]{$1$};
      \draw[thick,blue  ,->](O) -- (S)   node[midway,above]{$1$};
      \draw[thick,purple,->](L) -- (P)   node[midway,left ]{$2$};
      \draw[thick,purple,->](I) -- (N)   node[midway,left ]{$2$};
      \draw[thick,purple,->](M) -- (Q)   node[midway,left ]{$2$};
      \draw[thick,purple,->](Q) -- (S)   node[midway,left ]{$2$};
      \draw[thick,violet,->](C) -- (F)   node[midway,above]{$3$};
      \draw[thick,violet,->](G) -- (K)   node[midway,above]{$3$};
      \draw[thick,violet,->](P) -- (R)   node[midway,above]{$3$};
    \end{tikzpicture}
    \caption{\label{fig:kohnert-crystal}The Kohnert crystal on the Kohnert diagrams for the topmost diagram, with crystal edges $\Ke_1 {\color{blue}\nwarrow}$, $\Ke_2 {\color{purple}\uparrow}$, $\Ke_3 {\color{violet}\nearrow}$.}
  \end{center}
\end{figure}

As initial motivation for this construction, we have the following observation.

\begin{proposition}
  For a weak composition $\comp{a}$, the map $\psi$ sending $T\in\SSKT(\comp{a})$ to the diagram with a cell in position $(c,r)$ of $\psi(T)$ if and only if there is an entry $r$ in column $c$ of $T$ is a weight-preserving bijection $\SSKT(\comp{a}) \stackrel{\sim}{\rightarrow} \KD(\D(\comp{a}))$ satisfying $\psi(\e_i(T)) = \Ke_i(\psi(T))$ for all $i \geq 0$.
\end{proposition}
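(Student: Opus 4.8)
The plan is to exploit the transparent dictionary that $\psi$ sets up between the \emph{value} of a cell of $T$ and the \emph{row} of the corresponding cell of $\psi(T)$: a cell of value $v$ in column $c$ of $T$ becomes the cell $(c,v)$ of $\psi(T)$, so the horizontal coordinate is preserved and the label ($v$ versus row) is transferred verbatim. First I would record the basic structural fact that the entries in any fixed column of a semistandard key tableau are distinct; this is exactly what guarantees that two cells of $T$ never collide at the same position of $\psi(T)$. Granting this, weight-preservation is immediate: the number of cells of $T$ with value $v$ equals the number of columns of $T$ containing the value $v$, which is the number of cells of $\psi(T)$ in row $v$, whence $\wt(\psi(T)) = \wt(T)$.

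The heart of the statement is the intertwining $\psi(\e_i(T)) = \Ke_i(\psi(T))$, and here the dictionary makes the two pairing rules coincide. Under $\psi$ the cells of value $i$ (resp.\ $i+1$) of $T$ are carried bijectively, preserving columns, to the cells in row $i$ (resp.\ row $i+1$) of $\psi(T)$. Since the $i$-pairing of Definition~\ref{def:SSKT-pair} and the $i$-pairing of Definition~\ref{def:KD-pair} each depend only on the columns of the participating cells and on which of the two labels they carry, I would verify that the two pairings are literally the same pairing read through $\psi$ (this includes matching the ``lie strictly between'' clauses, which refer to columns in both cases). In particular the rightmost unpaired $i+1$ of $T$ sits in the same column $c^\ast$ as the rightmost unpaired cell of row $i+1$ of $\psi(T)$, and $\e_i(T)=0$ if and only if $\Ke_i(\psi(T))=0$. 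The remaining point is that the lateral part of $\e_i$ is invisible to $\psi$: each leftward swap in Definition~\ref{def:SSKT-raise} exchanges an $i$ and an $i+1$ within a single column, leaving that column's set of values unchanged, hence leaving $\psi$ unchanged. Thus $\psi(\e_i(T))$ and $\psi(T)$ differ only in column $c^\ast$, where the value $i+1$ has been replaced by $i$ --- precisely the effect of moving the cell $(c^\ast,i+1)$ down to $(c^\ast,i)$, which is $\Ke_i(\psi(T))$.

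For the bijection I would argue as follows. Injectivity follows from the intertwining together with connectedness of the Demazure crystal: by Theorem~\ref{thm:crystal-dem} the set $\SSKT(\comp{a})$ is $\B_w(\lambda)$, which has a unique highest weight element $u_\lambda$; since raising is synchronized with $\Ke_i$, two tableaux with the same image reach highest weight along the same sequence of raisings, land on the single tableau in the highest-weight fibre, and hence agree after undoing the (deterministic) raisings. For the image I would use that both sides carry the same generating function --- $\mathrm{ch}\,\B_w(\lambda) = \key_{\comp{a}}$ by Theorem~\ref{thm:crystal-dem} and $\sum_{D \in \KD(\D(\comp{a}))} x^{\wt(D)} = \key_{\comp{a}}$ by Theorem~\ref{thm:kohnert} --- so once $\psi$ is shown to land in $\KD(\D(\comp{a}))$, comparing coefficients weight-by-weight upgrades the weight-preserving injection to a bijection onto $\KD(\D(\comp{a}))$.

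The step I expect to be the main obstacle is precisely showing that the image lies in $\KD(\D(\comp{a}))$, i.e.\ that every $\psi(T)$ is reachable from the composition diagram $\D(\comp{a})$ by Kohnert moves. I would anchor this on the identity $\psi(T^{\max}) = \D(\comp{a})$, where $T^{\max}$ is the tableau filling each cell of row $r$ with the value $r$, and then either (a) match $\psi$ with the embedding $\varphi$ of Definition~\ref{def:embed-SSYT} from \cite{AS18}, which already identifies $\KD(\D(\comp{a}))$ with this Demazure crystal inside $\SSYT_n(\lambda)$; or (b) construct the inverse directly by showing that the column value-sets of any $D \in \KD(\D(\comp{a}))$ admit a (unique) filling of the shape $\comp{a}$ satisfying Definition~\ref{def:SSKT}, the delicate point being existence for every Kohnert diagram. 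Everything else --- the pairing correspondence and the invisibility of the lateral swaps --- is routine once the column-preserving dictionary is in place.
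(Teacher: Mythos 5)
Your treatment of the intertwining is correct and is essentially the paper's own argument: the paper likewise observes that the pairing rules of Definitions~\ref{def:SSKT-pair} and~\ref{def:KD-pair} correspond exactly under the column-preserving dictionary, that the entry changed in Definition~\ref{def:SSKT-raise} sits in the same column as the cell moved in Definition~\ref{def:KD-raise}, and that the lateral swaps of Definition~\ref{def:SSKT-raise} only permute entries within a column and hence are invisible to $\psi$. Your preliminary remark that column entries of a semistandard key tableau are distinct is also the right thing to record (it is part of the definition in \cite{Ass18}, though Definition~\ref{def:SSKT} as stated here omits it; without it $\psi$ is not even well defined).

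Where your proposal diverges from the paper is the bijection, and this is where the genuine gap sits. The paper does not prove this part at all: it cites \cite[Definition~3.14, Theorem~3.15]{Ass-W} for the weight-preserving bijection. Your injectivity argument (raise a common image to a diagram killed by all $\Ke_i$; by the intertwining both preimages are killed by all $\e_i$, hence both equal the unique highest weight element of $\B_w(\lambda)$ supplied by Theorem~\ref{thm:crystal-dem}; undo the raisings) is sound, and so is the counting step via $\mathrm{ch}\,\B_w(\lambda)=\key_{\comp{a}}$ and Theorem~\ref{thm:kohnert} --- but both rest on the containment $\psi(\SSKT(\comp{a}))\subseteq\KD(\D(\comp{a}))$, which you correctly identify as the main obstacle and then leave unexecuted. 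Your route (a) does not work as stated: $\varphi$ of Definition~\ref{def:embed-SSYT} maps Kohnert diagrams \emph{into} $\SSYT_n(\lambda)$, so ``matching $\psi$ with $\varphi$'' does not by itself locate the image of $\psi$. Your route (b) is the right idea, but it is precisely the content of \cite[Theorem~3.15]{Ass-W}, so asserting it is assuming the proposition. To actually close it with the tools in this paper: since rows of $T$ weakly decrease, the cell-by-cell map from column $c+1$ to column $c$ within each row shows that for every $r$ the number of entries $\geq r$ in column $c$ dominates that in column $c+1$, so $\psi(T)$ satisfies Eq.~\eqref{e:2.2} and is rectified by Lemma~\ref{lem:2.2}; then labeling the cell of $\psi(T)$ coming from a cell in row $j$ of the shape $\comp{a}$ by $j$ produces a candidate Kohnert labeling whose axioms \ref{i:label}--\ref{i:invert} of Definition~\ref{def:kohnert-tableaux} must be checked against conditions (1)--(3) of Definition~\ref{def:SSKT} (the flag condition (3) gives flaggedness, and the column-inversion condition (2) gives \ref{i:invert}); by \cite[Theorem~2.8]{AS18} this places $\psi(T)$ in $\KD(\D(\comp{a}))$ specifically, not merely in $\KD(\D(\comp{b}))$ for some $\comp{b}$. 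Without this verification --- or the citation the paper uses --- your argument establishes the intertwining but not the bijection.
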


\begin{proof}
  The map $\psi$ corresponds to the map in \cite[Definition~3.14]{Ass-W} that is proved to be a weight-preserving bijection in \cite[Theorem~3.15]{Ass-W}. To see the intertwining of the crystal operators, notice first that the pairing rules correspond exactly, and the column of the entry that changes in Definition~\ref{def:SSKT-raise} agrees with the column of the cell that moves in Definition~\ref{def:KD-raise}. The result now follows by noticing the set of entries within every other column remains constant in Definition~\ref{def:SSKT-raise}.
\end{proof}

To justify our construction, we will prove the Kohnert crystal on $\KD(D)$ for southwest diagrams $D$ is a Demazure crystal. Taking the character, this resolves Conjecture~\ref{conj:demazure}.

%
\section{Kohnert crystals}
%
\label{sec:proof}

We embed Kohnert crystals into tableaux crystals to prove each connected component of the Kohnert crystal is a subset of a highest weight crystal.

\subsection{Closure}
\label{sec:closed}

From the pairing rule, the proposed raising operators $\Ke_i$ are well-defined on all diagrams. However, crystal operators are maps $\e_i,\f_i:\B \rightarrow \B\cup\{0\}$. Thus we must show that these operators do not leave the set of Kohnert diagrams in which we begin, a fact that holds whenever the initial diagram is southwest.

Comparing the poset of Kohnert moves (Fig.~\ref{fig:kohnert-poset}) with the crystal structure on Kohnert diagrams (Fig.~\ref{fig:kohnert-crystal}), it is clear that, while the raising operators $\Ke_i$ lower cells from row $i+1$ down to row $i$, these are not, in general, Kohnert moves. Nevertheless, when we begin with a southwest diagram $D$, each application of the raising operator on $T\in\KD(D)$ can be reconstructed from a sequence of Kohnert moves and reverse Kohnert moves \emph{within} $\KD(D)$.

\begin{theorem}
  For $D$ a southwest diagram and $T \in \KD(D)$, if $\Ke_r(T) \neq 0$ for some positive row index $r$, then $\Ke_r(T) \in \KD(D)$.
  \label{thm:closed}
\end{theorem}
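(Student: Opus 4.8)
The plan is to simulate a single application of $\Ke_r$ by a sequence of ordinary Kohnert moves (Definition~\ref{def:kohnert_move}) and their inverses, every intermediate diagram of which stays inside $\KD(D)$; the southwest hypothesis (Definition~\ref{def:diagram-sw}) will be exactly what prevents the sequence from leaving the set.

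First I would fix notation and extract the local structure of the move. Let $x=(c,r+1)$ be the rightmost unpaired cell of row $r+1$, so that $\Ke_r$ sends $x$ to $(c,r)$ (Definition~\ref{def:KD-raise}). Reading the $r$-pairing of Definition~\ref{def:KD-pair} as a bracket matching (cells of row $r$ as open brackets, cells of row $r+1$ as close brackets), I would record three facts: (i) $(c,r)$ is empty, since otherwise $x$ would be column-paired; (ii) the unpaired cells of row $r+1$ all lie weakly left of $x$, while the unpaired cells of row $r$ lie strictly right of $c$; and (iii) the cells of rows $r,r+1$ strictly right of column $c$ form a balanced block, every row-$(r+1)$ cell among them being paired with a row-$r$ cell also strictly right of $c$. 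I would also use the elementary invariant that a Kohnert move never changes a cell's column, so all diagrams of $\KD(D)$ share the column multiplicities of $D$; in particular $\Ke_r(T)$ has the correct multiplicities, and the cell $x$ must be rerouted downward within its own column.

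Next I would construct the simulating sequence. Since $x$ is in general not the rightmost cell of row $r+1$---the balanced block of (iii) sits to its right---the bare move $x\mapsto(c,r)$ is not a Kohnert move. The idea is to lift the obstructing cells by reverse Kohnert moves to a common ancestor $\hat T$ from which $T$ is recovered by one batch of Kohnert moves and $\Ke_r(T)$ is produced by another; the balanced structure in (iii) together with the column invariant is what makes this lift-and-drop coherent, dictating precisely which heights are free. This realizes $\Ke_r$ as the advertised sequence of forward and reverse Kohnert moves.

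The hard part will be verifying that $\hat T$---equivalently, every diagram appearing in the sequence---actually belongs to $\KD(D)$, and this is the unique point at which southwestness is needed. The plan here is to prove a supporting lemma: for southwest $D$, the diagram obtained from $T\in\KD(D)$ by the lifting step is again in $\KD(D)$. The southwest condition supplies, at each relevant northwest/southeast pair, the cell at the southwest corner; these corner cells are exactly the row-$r$ cells that support the rearrangement of the balanced block and certify that the lifted configuration is still reachable from $D$ by Kohnert moves. I expect the bookkeeping of this reachability---tracking how the lifted cells redrop and pinning down the minimal southwest consequence that guarantees a valid landing for each---to be the most delicate and error-prone part of the argument, with the rest following by composing the forward moves and reading off that the net effect is $\Ke_r(T)$.
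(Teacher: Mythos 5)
Your overall strategy coincides with the paper's: factor $\Ke_r$ as a sequence of reverse Kohnert moves lifting the obstructing cells of row $r+1$ to the right of $x$, followed by one genuine Kohnert move dropping $x$, followed by Kohnert moves undoing the lifts, so that $\Ke_r(T)$ is exhibited inside $\KD(D)$. Your structural facts (i)--(iii) about the $r$-pairing are correct and match the paper's setup, where $y$ is the leftmost row-$r$ cell strictly right of $x$'s column and $z$ is the leftmost row-$(r+1)$ cell strictly right of it, with $y$ weakly left of $z$.

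However, the proposal stops exactly where the theorem begins. The ``supporting lemma'' you defer --- that the lifted diagram $\hat T$, and every intermediate diagram of the simulating sequence, lies in $\KD(D)$ --- is the entire content of the statement, and you offer no argument for it beyond the expectation that southwestness will supply it. Moreover, the mechanism you sketch points the wrong way: you propose to find southwest corner cells inside $T$ that ``certify reachability,'' but the paper's lever is the opposite observation, namely that the configuration of $x$ at $(c,r+1)$ and $y$ at $(c_1,r)$ with $(c,r)$ empty makes $T$ itself \emph{fail} the southwest condition. Since $D$ is southwest, this forces $T \neq D$, so reverse Kohnert moves from $T$ exist, and any reverse path restoring southwestness is constrained: filling $(c,r)$ by a reverse move is blocked by the presence of $y$, so $y$ must eventually be lifted to row $r+1$ or above, which is in turn blocked by $z$; iterating, there must be a reverse sequence within $\KD(D)$ lifting $z$ and every row-$(r+1)$ cell to its right strictly above row $r+1$ (in the case $c_1 = c_2$ one uses the indistinguishability of cells to trade lifting $y$ for lifting $z$). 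Only then is $x$ the rightmost cell of row $r+1$, so dropping it is an honest Kohnert move, after which the lifts are undone. Without a deduction of this kind --- from $T \in \KD(D)$, $D$ southwest, and $T$ non-southwest to the existence of the lifting sequence --- your plan assumes precisely what is to be proved. A smaller inaccuracy: the row-$r$ cells of your ``balanced block'' never move in the simulation; only the row-$(r+1)$ cells right of $x$ are lifted, so those row-$r$ cells are not part of the bookkeeping at all.
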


\begin{proof}
  Suppose $\Ke_r$ acts on $T$ by lowering the cell $x$ in column $c_0$ from row $r+1$ down to row $r$. If $x$ is the rightmost cell in row $r+1$, then this is a Kohnert move, and so $\Ke_r(T) \in \KD(T) \subseteq \KD(D)$. Otherwise, let $z$ be the leftmost cell in row $r+1$, say in column $c_2$, that lies strictly right of column $c_0$, so that $c_0 < c_2$. In order for $\Ke_r$ to act by moving $x$, there must be no cell in column $c_0$, row $r$. Let $y$ be the leftmost cell in row $r$, say in column $c_1$, that lies strictly right of column $c_0$, so that $c_0 < c_1$. Then we must have $c_1 \leq c_2$ as well, else $\Ke_i$ would act by moving $z$ instead of $x$. See Fig.~\ref{fig:raise-kohnert} for an illustration.

  \begin{figure}[ht]
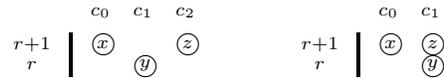

    \begin{displaymath}
      \begin{array}{rl}
        & \nulltab{ & c_0 & & c_1 & & c_2 } \\
        \nulltab{r+1 \\ r} & \vline\cirtab{ & x & & & & z \\ & & & y }
      \end{array}
      \hspace{4\cellsize}
      \begin{array}{rl}
        & \nulltab{ & c_0 & & c_1 } \\
        \nulltab{r+1 \\ r} & \vline\cirtab{ & x & & z \\ & & & y }
      \end{array}
    \end{displaymath}
    \caption{\label{fig:raise-kohnert}An illustration of the two possible scenarios when the raising operator $\Ke_r$ does not act by a Kohnert move.}
  \end{figure}
  
  Consider the first scenario in Fig.~\ref{fig:raise-kohnert}. Here, $T$ is not southwest as evidenced by the cells $x,y$. Since $T \in \KD(D)$ for a southwest diagram $D$, this means that $T \neq D$, and so we must be able to perform reverse Kohnert moves on $T$. In particular, there must be some sequence of reverse Kohnert moves that results in $y$ being lifted to the same row as (or above) $x$, since the position of $y$ precludes moving a cell in column $c_0$ up to row $r$ by reverse Kohnert moves. Then, since raising $y$ is not currently a reverse Kohnert move due to the position of $z$, there must exist some sequence of reverse Kohnert moves that results in $z$ (and, by iterating the argument, all cells to the right of this in row $r+1$) being lifted to a row strictly above $r+1$. Let $S$ denote the resulting Kohnert diagram of $D$. Then we apply a Kohnert move to $x$, still in row $r+1$, column $c_0$ in $S$, followed by Kohnert moves that undo the reverse Kohnert moves lifting $z$ to above row $r+1$. The result is $\Ke_i(T)$, obtained by a sequence of Kohnert moves on $S$, and so $\Ke_i(T) \in \KD(S)\subseteq \KD(D)$.
  
  Now consider the second scenario in Fig.~\ref{fig:raise-kohnert}. Again, $T$ is not southwest due to the cells $x,y$. As before, there must be some sequence of reverse Kohnert moves that results in the cell $y$ being lifted to row $r+1$ (or above). Equivalently, since cells are indistinguishable, there is a sequence of reverse Kohnert moves that lifts the cell $z$ above row $r+1$. Inductively, all cells in row $r+1$ to the right of column $c_0$ may be lifted above row $r+1$ by a sequence of reverse Kohnert moves, after which we may use a Kohnert move to push $x$ down to row $r$, then a series of Kohnert moves undoing the previous reverse Kohnert moves. The result is $\Ke_i(T)$, obtained by a sequence of Kohnert moves on a diagram in $\KD(D)$.
\end{proof}

Conversely to Theorem~\ref{thm:closed}, if $D$ is not southwest, then it can happen that $\Ke_i(T) \not\in \KD(D)$ for some $T\in\KD(D)$ for which $\Ke_i(T)\neq 0$. Thus we restrict our attention henceforth to southwest diagrams.

\begin{definition}
  Given a southwest diagram $D$, the \newword{Kohnert crystal on $\KD(D)$} consists of the following data: the set $\KD(D)$, the weight map $\wt:\KD(D)\rightarrow\mathbb{N}^n$, crystal raising operators $\Ke_i$, and crystal lowering operators $\Kf_i$ defined by $\Kf_i(T) = T'$ whenever $\Ke_i(T')=T$ and $\Kf_i(T) = 0$ otherwise. 
  \label{def:kohnert-crystal}
\end{definition}

For example, Fig.~\ref{fig:kohnert-crystal} shows the Kohnert crystal for the topmost diagram. Notice there are two connected components, one of which is isomorphic as a directed, colored graph to $\B_{312}(3,2,0)$ from Fig.~\ref{fig:demazure-crystal}.

The as yet undiscussed lowering operators must be inverse to the well-defined raising operators when both are nonzero. The following reveals their definition.

\begin{lemma}
  Let $T$ be a diagram and $i\geq 1$ an integer such that $\Ke_i(T) \neq 0$. Then, for $r=i,i+1$, a cell $(c,r) \in T$ is $i$-paired in $T$ if and only if $(c,r) \in \Ke_i(T)$ and is $i$-paired in $\Ke_i(T)$. Moreover, if $\Ke_i$ acts on $T$ by pushing the cell in position $(c,i+1)$ down to position $(c,i)$, then $(c,i)$ is the leftmost cell in row $i$ of $\Ke_i(T)$ that is not $i$-paired.
  \label{lem:KD-pairing}
\end{lemma}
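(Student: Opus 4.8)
The plan is to recast the $i$-pairing of Definition~\ref{def:KD-pair} as an instance of classical bracket matching and then exploit its confluence. Reading the columns of $T$ from left to right, I would record each cell in row $i$ as an open bracket and each cell in row $i+1$ as a close bracket (a column containing cells in both rows contributes an adjacent open-close pair, matched immediately by the first bullet of Definition~\ref{def:KD-pair}). Since the iterative rule pairs an unpaired row-$i$ cell with an unpaired row-$(i+1)$ cell to its right whenever everything strictly between is already paired, the $i$-pairing coincides exactly with the non-crossing bracket matching obtained by repeatedly cancelling adjacent open-close pairs. First I would verify this equivalence, noting in particular that adjacent-pair cancellation is confluent, so both the set of matched pairs and the unpaired subword are independent of the order of cancellation.

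Under this dictionary the unpaired cells, read left to right, always form a word of the shape (close brackets)(open brackets); that is, all unpaired row-$(i+1)$ cells precede all unpaired row-$i$ cells in column order. Because $\Ke_i(T)\neq 0$ there is at least one unpaired row-$(i+1)$ cell, and the rightmost such cell, say in column $c$, sits immediately to the left of the block of unpaired row-$i$ cells; this is precisely the cell that $\Ke_i$ moves from $(c,i+1)$ down to $(c,i)$.

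The key step is to compare the matchings of $T$ and $\Ke_i(T)$. The moved cell is unpaired in $T$, so the sequence of adjacent-pair cancellations reducing $T$ never touches it. I would therefore apply that same sequence of cancellations to $\Ke_i(T)$: every other cell occupies the same column and row as in $T$, so the cancellations remain valid and produce the same unpaired word as for $T$, except that the single symbol at column $c$ has flipped from a close bracket to an open bracket. This turns (closers)(openers) into (one fewer closer)(one more opener), which is again fully reduced, so by confluence it is the bracket matching of $\Ke_i(T)$. Hence $\Ke_i(T)$ has exactly the same matched pairs as $T$, and any cell in rows $i,i+1$ other than the moved one is $i$-paired in $T$ if and only if it survives into $\Ke_i(T)$ and is $i$-paired there; the moved cell itself is unpaired in $T$ and absent from row $i+1$ of $\Ke_i(T)$, so both sides of the biconditional fail for it, completing the first assertion. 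Moreover the flipped symbol is now the leftmost open bracket in the reduced word, so $(c,i)$ is the leftmost unpaired cell in row $i$ of $\Ke_i(T)$, which is the second assertion.

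I expect the only genuine obstacle to be justifying the equivalence between the paper's iterative pairing and bracket matching, and with it the claim that flipping a single unpaired symbol cannot cascade to alter any existing pair. The confluence of adjacent-pair cancellation resolves both at once: since the flipped cell is unpaired, it lies outside every cancellation used for $T$, so the unique reduced form is obtained by reusing those cancellations verbatim, and no new pairing can form because the inserted open bracket has no unpaired close bracket anywhere to its right.
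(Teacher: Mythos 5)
Your proof is correct, and it reaches the conclusion by a different formal route than the paper. The paper argues directly from Definitions~\ref{def:KD-pair} and~\ref{def:KD-raise}: since the moved cell $(c,i+1)$ is the rightmost unpaired cell in row $i+1$, every row-$(i+1)$ cell to its right is paired with a row-$i$ cell also to its right, so the new cell $(c,i)$ has no available partner and is unpaired in $\Ke_i(T)$; it then simply \emph{asserts} that all cells outside column $c$ are paired in $\Ke_i(T)$ exactly as in $T$, and obtains the leftmost-unpaired claim by contradiction (an unpaired cell $(c',i)$ with $c'<c$ in $\Ke_i(T)$ would have been unpaired in $T$ and hence would have paired with $(c,i+1)$). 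Your recasting as bracket matching turns the tersest step of the paper's proof --- the invariance of all other pairings --- into a consequence of a standard fact, the confluence of the cancellation $() \to \varepsilon$, and it simultaneously shows that the pairing of Definition~\ref{def:KD-pair} is well defined independently of the order in which pairs are formed, something the paper leaves implicit; the reduced-word shape $)^a(^b$ then yields the leftmost-unpaired claim directly rather than by contradiction. What each approach buys: the paper's argument is shorter and stays entirely inside the definition, while yours localizes all the combinatorial content in one reusable lemma and makes the ``no cascade'' worry rigorous (the surviving cell never lies strictly between, nor participates in, any cancellation used for $T$, so the same sequence is valid for $\Ke_i(T)$, and $)^{a-1}(^{b+1}$ is already reduced). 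The one convention you should state explicitly is that a doubly occupied column must be encoded with the row-$i$ symbol before the row-$(i+1)$ symbol, i.e.\ as the factor $()$, so that the first bullet of Definition~\ref{def:KD-pair} corresponds to an immediate cancellation (and note the target $(c,i)$ is vacant in $T$, since otherwise $(c,i+1)$ would be column-paired); with these points made explicit, your argument is complete.
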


\begin{proof}
  Suppose $\Ke_i$ acts on $T$ by pushing the cell in position $(c,i+1)$ down to position $(c,i)$. By Definition~\ref{def:KD-raise}, $(c,i+1)$ is the rightmost cell in row $i+1$ that is unpaired, and by the $i$-pairing algorithm, every cell in row $i+1$ to the right of column $c$ is $i$-paired with a cell in row $i$ that is also to the right of column $c$. Thus there is no cell in row $i+1$ to which the cell in position $(c,i)$ of $\Ke_i(T)$ can $i$-pair, ensuring it is unpaired in $\Ke_i(T)$. The other cells not in column $c$ remain as they were, and so the same set of cells in rows $i$ and $i+1$ not in column $c$ are $i$-paired in $T$ and $\Ke_i(T)$ the same way. Thus if some cell in row $i$, say in column $c'<c$, is not $i$-paired in $\Ke_i(T)$, then it is also not $i$-paired in $T$, and so, by Definition~\ref{def:KD-raise}, it would be $i$-paired with $(c,i+1)$ in $T$, a contradiction. Therefore the $(c,i)$ is indeed the leftmost cell in row $i$ of $\Ke_i(T)$ that is not $i$-paired.
\end{proof}

In contrast with the raising operators, the lowering operator applied to $T\in\KD(D)$ might result in a diagram not in $\KD(D)$, even for $D$ southwest.

\begin{definition}
  Given a diagram $D$, for $T\in\KD(D)$ and $i \geq 1$ an integer, the \newword{lowering operator} $\Kf_i$ acts on $T$ as follows. If $T$ has an unpaired cell in row $i$, then let $T'$ be the result of moving the leftmost unpaired cell in row $i$ up to row $i+1$, staying within its column, leaving all other cells unmoved. If $T$ has no unpaired cell in row $i$ or if $T'\not\in\KD(D)$, then set $\Kf_i(T)=0$; else set $\Kf_i(T)=T'$.
  \label{def:KD-lower}
\end{definition}

By Lemma~\ref{lem:KD-pairing}, the raising and lowering operators are inverse when nonzero.

\begin{proposition}
  Let $D$ be a southwest diagram. Then for $S,T\in\KD(D)$, we have $\Ke_i(T) = S$ if and only if $\Kf_i(S)=T$.
  \label{prop:inverse}
\end{proposition}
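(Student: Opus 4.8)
The plan is to peel off the membership conditions and reduce the statement to the purely local fact that the cell-moving operations underlying $\Ke_i$ and $\Kf_i$ are mutually inverse. Write $\widehat{\Ke}_i$ and $\widehat{\Kf}_i$ for the \emph{raw} versions of these maps, so that $\widehat{\Ke}_i(T)$ moves the rightmost unpaired cell in row $i+1$ down (exactly as in Definition~\ref{def:KD-raise}) and $\widehat{\Kf}_i(T)$ moves the leftmost unpaired cell in row $i$ up (as in Definition~\ref{def:KD-lower} but \emph{without} the test $T'\in\KD(D)$). Note that $\Ke_i=\widehat{\Ke}_i$ always, while $\Kf_i(T)$ equals $\widehat{\Kf}_i(T)$ when the latter lies in $\KD(D)$ and is $0$ otherwise. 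Thus it suffices to prove the raw equivalence $\widehat{\Ke}_i(T)=S\neq 0 \iff \widehat{\Kf}_i(S)=T\neq 0$, and then to check that the hypotheses $S,T\in\KD(D)$ make the raw and genuine operators agree where it matters.

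First I would prove the forward raw implication, which is exactly Lemma~\ref{lem:KD-pairing}: if $\widehat{\Ke}_i$ sends $T$ to $S$ by moving $(c,i+1)$ to $(c,i)$, the lemma identifies $(c,i)$ as the leftmost unpaired cell of row $i$ in $S$, so $\widehat{\Kf}_i(S)$ moves precisely that cell back up, returning $T$. For the backward raw implication I would exploit the symmetry of the pairing rule under the involution $\sigma$ reversing the column order and interchanging rows $i$ and $i+1$. Since the matching in Definition~\ref{def:KD-pair} is a symmetric relation and the condition ``all cells strictly between are paired'' is preserved by $\sigma$, one checks that $\sigma$ carries the $i$-pairing of $T$ to the $i$-pairing of $\sigma(T)$; under $\sigma$ the rightmost unpaired cell of row $i+1$ becomes the leftmost unpaired cell of row $i$, whence $\widehat{\Kf}_i=\sigma\,\widehat{\Ke}_i\,\sigma$. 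Given $\widehat{\Kf}_i(S)=T$, applying $\sigma$ yields $\widehat{\Ke}_i(\sigma S)=\sigma T\neq 0$; feeding this into the already-established Lemma~\ref{lem:KD-pairing} gives $\widehat{\Kf}_i(\sigma T)=\sigma S$, and since $\widehat{\Kf}_i(\sigma T)=\sigma\,\widehat{\Ke}_i(T)$ this rearranges to $\widehat{\Ke}_i(T)=S$.

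With the raw equivalence in hand, the membership bookkeeping is routine. For the forward direction of the proposition, $\Ke_i(T)=S$ is literally $\widehat{\Ke}_i(T)=S$, so $\widehat{\Kf}_i(S)=T$; since $T\in\KD(D)$ by hypothesis, Definition~\ref{def:KD-lower} returns $\Kf_i(S)=\widehat{\Kf}_i(S)=T$. For the backward direction, $\Kf_i(S)=T\neq 0$ forces $\widehat{\Kf}_i(S)=T$ with $T\in\KD(D)$, whence $\widehat{\Ke}_i(T)=S$, that is, $\Ke_i(T)=S$, and $S\in\KD(D)$ by hypothesis (in agreement with Theorem~\ref{thm:closed}, which guarantees that $\Ke_i$ does not leave $\KD(D)$ for southwest $D$).

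The main obstacle is the backward raw implication: one must know not only that moving the leftmost unpaired cell of row $i$ up produces an unpaired cell in row $i+1$, but that it produces the \emph{rightmost} such cell, with every other pairing in rows $i,i+1$ left undisturbed, so that a subsequent $\widehat{\Ke}_i$ selects exactly that cell. I expect the cleanest route to be the symmetry argument above, which lets Lemma~\ref{lem:KD-pairing} do all of the combinatorial work; the alternative is a direct but more tedious analysis of how the greedy bracket matching of Definition~\ref{def:KD-pair} responds to relocating a single unpaired cell, which I would avoid.
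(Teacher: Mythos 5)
Your proof is correct, and its skeleton matches the paper's: the forward direction via Lemma~\ref{lem:KD-pairing} (the moved cell becomes the leftmost unpaired cell of row $i$, all other pairings untouched, so lowering returns $T$), plus the same membership bookkeeping, using that $T\in\KD(D)$ makes the test in Definition~\ref{def:KD-lower} pass and that $\Kf_i(S)=T\neq 0$ certifies $T\in\KD(D)$. Where you genuinely diverge is the converse. The paper disposes of it in one sentence --- ``the argument in Lemma~\ref{lem:KD-pairing} is easily reversed'' --- i.e.\ it re-runs the pairing analysis with the roles of $S$ and $T$ swapped, leaving the details to the reader. You instead derive the converse formally from the forward direction by conjugating with the involution $\sigma$ (reverse column order within a bounding rectangle, swap rows $i$ and $i+1$): since Definition~\ref{def:KD-pair} is $\sigma$-equivariant --- same-column pairs map to same-column pairs, and ``unpaired row-$i$ cell pairs with unpaired row-$i+1$ cell to its right with everything strictly between paired'' is carried to the same rule --- one gets $\widehat{\Kf}_i=\sigma\,\widehat{\Ke}_i\,\sigma$ on raw operators, and the backward implication falls out of Lemma~\ref{lem:KD-pairing} applied to $\sigma S$. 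This is the standard word-reversal symmetry for bracket matchings, and it buys you a complete argument with no new case analysis, at the cost of introducing $\sigma$ and of relying on the matching produced by the iterative rule being canonical (independent of the order of iteration), so that ``the $i$-pairing of $\sigma(T)$'' is well-defined as the $\sigma$-image of the $i$-pairing of $T$; that canonicity is standard and is implicitly assumed by the paper as well (e.g.\ already in Lemma~\ref{lem:KD-pairing}), so it is a fair assumption, though a fully self-contained write-up should state it. The only other point to make explicit is that $\sigma$ depends on a choice of bounding column width $N$; since pairing and both operators depend only on the relative column order and move cells within their columns, the choice is immaterial and $\sigma$ is an involution, so the conjugation identity is sound.
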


\begin{proof}
  Suppose $\Ke_i(T)=S$ for some $T\in\KD(D)$. Then $S\in\KD(D)$ and, by Lemma~\ref{lem:KD-pairing}, the only difference between $T$ and $S$ is that the rightmost unpaired cell in row $i+1$ of $T$ moves down to become the leftmost unpaired cell in row $i$ of $S$. Therefore $\Kf_i(S)=T$. Conversely, if $\Kf_i(S)=T$ for some $S\in\KD(D)$, then by Definition~\ref{def:KD-lower}, we have $T \in \KD(D)$. The argument in Lemma~\ref{lem:KD-pairing} is easily reversed to see that the difference between $S$ and $T$ is again characterized in the same way, and so $\Ke_i(T)=S$.
\end{proof}

Notice in Definition~\ref{def:KD-lower} $\Kf_i$ depends on the diagram $D$, whereas Definition~\ref{def:KD-raise} is independent of $D$.

\subsection{Rectification}
\label{sec:rectify}

To see the raising operators $\Ke_i$ have the basic structure of crystal operators, we define an injective map from $\KD(D)$ to a disjoint union of tableaux crystals that intertwines the crystal operators. Taking advantage of the map in Definition~\ref{def:embed-SSYT} from the Kohnert diagrams of a composition diagram to tableaux, it is enough to map each diagram in $\KD(D)$ to a diagram in $\KD(\D(\comp{a}))$ for some weak composition $\comp{a}$. For this, we use a characterization of diagrams obtainable by Kohnert moves on composition diagrams given in \cite[Lemma~2.2]{AS18}.

\begin{lemma}[\cite{AS18}]
  A diagram $T$ can be obtained via a series of Kohnert moves on a composition diagram if and only if for column index $c\geq 1$ and every row index $r\geq 1$, we have
  \begin{equation}
    \#\{ (c,s) \in T \mid s \geq r \} \geq \#\{ (c+1,s) \in T \mid s \geq r \}.
    \label{e:2.2}
  \end{equation}
  \label{lem:2.2}
\end{lemma}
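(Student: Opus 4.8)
\textbf{The plan is} to characterize membership in $\KD(\D(\comp{a}))$ for a composition diagram by a purely local inequality on column counts, and to do so by proving the two directions separately. First I would establish the forward direction (necessity of the inequality) by showing that the condition in \eqref{e:2.2} is preserved under a single Kohnert move and holds for the initial composition diagram; since $\KD(\D(\comp{a}))$ is generated by iterating Kohnert moves, induction on the number of moves then gives the claim for all diagrams in the set. The base case is immediate: a composition diagram $\D(\comp{a})$ is left-justified, so in each column $c$ the cells occupy an initial segment near the bottom, and the number of cells in column $c$ weakly dominates the number in column $c+1$ at every height threshold $r$.

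For the inductive step, suppose \eqref{e:2.2} holds for a diagram $T$ and let $T'$ be obtained by a single Kohnert move, say moving the rightmost cell of some row down within its column. The key observation is that a Kohnert move affects the column counts $\#\{(c,s)\in T \mid s\geq r\}$ in a controlled way: moving a cell down within a fixed column $c_0$ can only decrease, by at most one, the partial column-sum $\#\{(c_0,s)\mid s\geq r\}$ for intermediate values of $r$, and leaves it unchanged for $r$ below the landing row and above the starting row. I would check that the Kohnert move's defining feature—that the cell selected is the \emph{rightmost} in its row—is exactly what guarantees the inequality between adjacent columns $c_0$ and $c_0\pm 1$ cannot be violated by this decrease. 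Here the rightmost condition ensures that to the right of the moving cell the relevant rows are empty, so the neighbor-column counts are already small enough to absorb the change.

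The reverse direction (sufficiency) is the step I expect to be \textbf{the main obstacle}. Here I would argue that any diagram $T$ satisfying \eqref{e:2.2} can be reached from $\D(\wt\text{-appropriate }\comp{a})$ by some sequence of Kohnert moves, or equivalently, can be brought up to a composition diagram by reverse Kohnert moves while staying inside the set cut out by the inequality. The natural strategy is a greedy lifting argument: repeatedly apply a reverse Kohnert move (lifting the bottom cell of a column up, or equivalently lifting a cell to fill a gap) chosen so as to decrease some monotonically-decreasing statistic—for instance, the total vertical displacement $\sum_{(c,r)\in T} r$ relative to the left-justified target, or a lexicographic measure of how far $T$ is from being left-justified. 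The inequality \eqref{e:2.2} should be precisely the invariant that certifies, at each step, that a legal reverse Kohnert move exists: whenever $T$ is not itself a composition diagram there is a column $c$ and a height at which the cells fail to form a bottom-justified segment, and \eqref{e:2.2} forces the configuration to the right to admit lifting a cell so as to repair the gap. The delicate point is verifying that such a reverse move can always be performed in a way that preserves \eqref{e:2.2}, so that the induction hypothesis applies to the lifted diagram; I would handle this by choosing the reverse move canonically (e.g. acting on the leftmost offending column) and checking the inequality for the single pair of adjacent columns that the move touches.

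Since this lemma is quoted from \cite{AS18} rather than reproven, the cleanest presentation would simply cite that reference for the full argument; but were I to supply a self-contained proof, the two-part induction above is the route I would take, with the sufficiency direction—constructing an explicit unlifting sequence governed by \eqref{e:2.2}—being where the real work lies.
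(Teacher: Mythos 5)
First, a framing point: the paper does not prove this lemma at all --- it is quoted from \cite{AS18}, so there is no in-paper argument to compare against, and your closing remark that the cleanest course is simply to cite that reference is exactly what the paper does. Your two-direction architecture (preservation of \eqref{e:2.2} under Kohnert moves, plus a greedy unlifting for the converse) is the natural and correct route, and your base case is right. However, your justification of the inductive step contains a concrete error: when the cell moves from $(c_0,r_0)$ down to the first gap $(c_0,r_1)$, the rightmost-in-its-row hypothesis guarantees only that row $r_0$ is empty to the right of column $c_0$; the rows strictly between $r_1$ and $r_0$ may perfectly well contain cells of column $c_0+1$, so it is false that ``to the right of the moving cell the relevant rows are empty.'' The inequality nevertheless survives, but for a different reason: for each threatened threshold $r$ with $r_1 < r \le r_0$, column $c_0$ is fully occupied in rows $r,\dots,r_0$ (that is what ``first available position'' buys you), so $\#\{(c_0,s) : s\ge r\} = (r_0-r+1) + \#\{(c_0,s) : s > r_0\}$, while column $c_0+1$ misses row $r_0$, so $\#\{(c_0+1,s) : s\ge r\} \le (r_0-r) + \#\{(c_0+1,s) : s>r_0\}$; combining with \eqref{e:2.2} at threshold $r_0+1$ gives a \emph{strict} pre-move inequality at threshold $r$, which absorbs the decrement. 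Without some such counting step the necessity direction does not go through as written.

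Second, the sufficiency direction, which you rightly identify as the real work, is missing its key ingredients rather than merely being delicate. Your proposed decreasing statistic, ``total vertical displacement relative to the left-justified target,'' is not well-defined from $T$ alone: Kohnert moves preserve column counts, not row counts, so the weight of $T$ does not determine the source composition (your phrase ``$\wt$-appropriate $\comp{a}$'' has no meaning here), and a given $T$ can lie in $\KD(\D(\comp{a}))$ for several distinct $\comp{a}$, so there is no canonical target against which to measure displacement. Since reverse Kohnert moves increase row sums, termination requires an honest monovariant --- for instance, showing each chosen lift strictly reduces the number of empty positions below cells in their columns. More importantly, a lift in column $c$ endangers \eqref{e:2.2} against the \emph{left} neighbor, the pair $(c-1,c)$, not the right one, since it increases the partial counts of column $c$ at intermediate thresholds; and one must separately verify that the lift is a legal reverse Kohnert move, i.e., that the cell lands as the rightmost cell of its new row with its column packed beneath it. Your plan to check ``the single pair of adjacent columns that the move touches'' both looks at the wrong endangered pair and omits the legality check. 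These verifications --- that every non-composition diagram satisfying \eqref{e:2.2} admits a legal, invariant-preserving reverse move, together with termination --- constitute essentially the entire content of the converse, and they remain a plan rather than a proof in your proposal.
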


We can think of Eq.~\eqref{e:2.2} as a diagram analog of the weakly decreasing condition on weak compositions in the sense that we will define crystal-like operators that take a given diagram to a canonical diagram satisfying Eq.~\eqref{e:2.2}. Given this, it is helpful to introduce terminology for the diagrams characterized by Eq.~\eqref{e:2.2}.

\begin{definition}
  A diagram $T$ is \newword{rectified} if it satisfies Eq.~\eqref{e:2.2} or, equivalently, if $T \in \KD(\D(\comp{a}))$ for some weak composition $\comp{a}$.
  \label{def:rectified}
\end{definition}

\begin{figure}[ht]
  \begin{center}
    \begin{tikzpicture}[xscale=0.3,yscale=0.3,
      every node/.style={inner sep=-0.4\cellsize}]
      \node at (3,5) (A35) {$\cball{red}{}$};
      \node at (4,5) (A45) {$\circify{\ }$};
      \node at (1,4) (A14) {$\circify{\ }$};
      \node at (3,4) (A34) {$\cball{red}{}$};
      \node at (4,3) (A43) {$\circify{\ }$};
      \node at (5,3) (A53) {$\circify{\ }$};
      \node at (7,3) (A73) {$\cball{red}{}$};
      \node at (1,2) (A12) {$\circify{\ }$};
      \node at (2,2) (A22) {$\circify{\ }$};
      \node at (3,2) (A32) {$\circify{\ }$};
      \node at (5,2) (A52) {$\circify{\ }$};
      \node at (6,2) (A62) {$\circify{\ }$};
      \node at (4,1) (A41) {$\circify{\ }$};
      \node at (8,1) (A81) {$\circify{\ }$};
      \draw[black] (0.5,5.5) -- (0.5,0.5) -- (8.5,0.5) ;
      \draw[violet] (A35) -- (A45) ;
      \draw[violet] (A34) -- (A43) ;
      \draw[violet] (A43) -- (A53) ;
      \draw[violet] (A12) -- (A22) ;
      \draw[violet] (A22) -- (A32) ;
      \draw[violet] (A45) -- (A52) ;
      \draw[violet] (A52) -- (A62) ;
      \draw[violet] (A32) -- (A41) ;
      \draw[violet] (A73) -- (A81) ;
      \node at (13,5) (B35) {$\circify{\ }$};
      \node at (14,5) (B45) {$\circify{\ }$};
      \node at (13,4) (B34) {$\circify{\ }$};
      \node at (14,4) (B44) {$\circify{\ }$};
      \node at (15,3) (B53) {$\circify{\ }$};
      \node at (16,3) (B63) {$\circify{\ }$};
      \node at (17,3) (B73) {$\circify{\ }$};
      \node at (13,2) (B32) {$\circify{\ }$};
      \node at (14,2) (B42) {$\circify{\ }$};
      \node at (15,2) (B52) {$\circify{\ }$};
      \node at (16,2) (B62) {$\circify{\ }$};
      \node at (17,2) (B72) {$\circify{\ }$};
      \node at (15,1) (B51) {$\circify{\ }$};
      \node at (18,1) (B81) {$\circify{\ }$};
      \draw[thin,black] (12.5,5.5) -- (12.5,0.5) -- (18.5,0.5) ;
      \draw[thin,violet] (B35) -- (B45) ;
      \draw[thin,violet] (B34) -- (B44) ;
      \draw[thin,violet] (B44) -- (B53) ;
      \draw[thin,violet] (B53) -- (B63) ;
      \draw[thin,violet] (B63) -- (B73) ;
      \draw[thin,violet] (B32) -- (B42) ;
      \draw[thin,violet] (B42) -- (B52) ;
      \draw[thin,violet] (B52) -- (B62) ;
      \draw[thin,violet] (B62) -- (B72) ;
      \draw[thin,violet] (B45) -- (B51) ;
      \draw[thin,violet] (B72) -- (B81) ;
    \end{tikzpicture}
    \caption{\label{fig:lemma2.2}The column pairing rule on an unrectified (left) and a rectified (right) diagram, where cells right of the first column not column paired to the left are highlighted.}
  \end{center}
\end{figure}
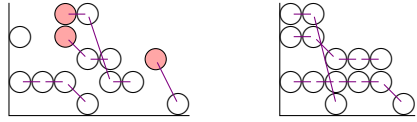

For example, in Fig.~\ref{fig:lemma2.2}, the diagram on the left is rectified whereas the diagram on the right is not. To define our rectification operators, we begin with a column pairing rule that is precisely the transpose of the row $i$-pairing rule in Definition~\ref{def:KD-pair}.

\begin{definition}
  For $T$ a diagram and $i \geq 1$ an integer, the \newword{column $i$-pairing} of cells of $T$ in columns $i$ and $i+1$ is defined as follows:
  \begin{itemize}
  \item $i$-pair cells in columns $i$ and $i+1$ whenever they appear in the same row,
  \item iteratively $i$-pair an unpaired cell in column $i+1$ with an unpaired cell in column $i$ above it whenever all cells in columns $i$ and $i+1$ that lie strictly between them are already $i$-paired.
  \end{itemize}
  \label{def:KD-cpair}
\end{definition}

Given the upward direction for column $i$-pairing, there will be an unpaired cell in column $i+1$ if and only if Eq.~\eqref{e:2.2} fails. For example, Fig.~\ref{fig:lemma2.2} shows the column pairings for two diagram, one rectified and one not.

\begin{figure}[ht]
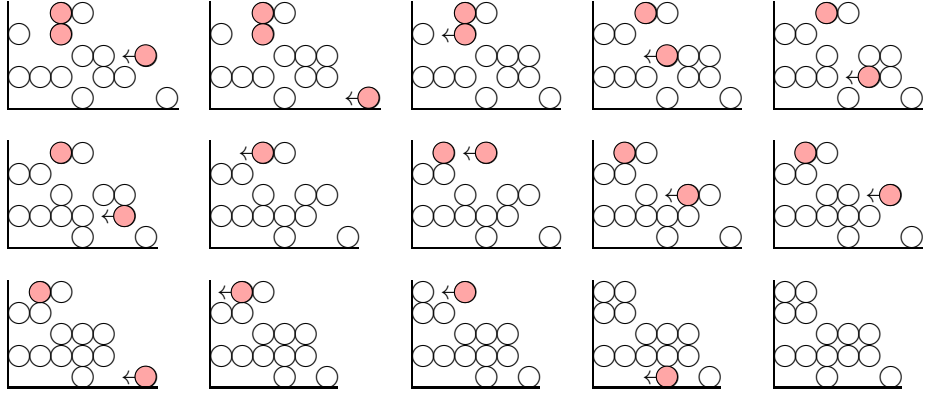

  \begin{displaymath}
    \arraycolsep=0.75\cellsize
    \begin{array}{lllll}
    \cirtab{%
      & & \cball{red}{} & \ \\
      \ & & \cball{red}{} \\
      & & & \ & \ & & \leftball{red}{} \\
      \ & \ & \ & & \ & \ \\
      & & & \ & & & & \ \\\hline } &
    \cirtab{%
      & & \cball{red}{} & \ \\
      \ & & \cball{red}{} \\
      & & & \ & \ & \ & \\
      \ & \ & \ & & \ & \ \\
      & & & \ & & & & \leftball{red}{} \\\hline } &
    \cirtab{%
      & & \cball{red}{} & \ \\
      \ & & \leftball{red}{} \\
      & & & \ & \ & \ & \\
      \ & \ & \ & & \ & \ \\
      & & & \ & & & \ \\\hline } &
    \cirtab{%
      & & \cball{red}{} & \ \\
      \ & \ \\
      & & & \leftball{red}{} & \ & \ & \\
      \ & \ & \ & & \ & \ \\
      & & & \ & & & \ \\\hline } &
    \cirtab{%
      & & \cball{red}{} & \ \\
      \ & \ \\
      & & \ & & \ & \ & \\
      \ & \ & \ & & \leftball{red}{} & \ \\
      & & & \ & & & \ \\\hline } \\ \\
    \cirtab{%
      & & \cball{red}{} & \ \\
      \ & \ \\
      & & \ & & \ & \ & \\
      \ & \ & \ & \ &  & \leftball{red}{} \\
      & & & \ & & & \ \\\hline } &
    \cirtab{%
      & & \leftball{red}{} & \ \\
      \ & \ \\
      & & \ & & \ & \ & \\
      \ & \ & \ & \ & \ \\
      & & & \ & & & \ \\\hline } &
    \cirtab{%
      & \cball{red}{} & & \leftball{red}{} \\
      \ & \ \\
      & & \ & & \ & \ & \\
      \ & \ & \ & \ & \ \\
      & & & \ & & & \ \\\hline } &
    \cirtab{%
      & \cball{red}{} & \ \\
      \ & \ \\
      & & \ & & \leftball{red}{} & \ & \\
      \ & \ & \ & \ & \ \\
      & & & \ & & & \ \\\hline } &
    \cirtab{%
      & \cball{red}{} & \ \\
      \ & \ \\
      & & \ & \ & & \leftball{red}{} & \\
      \ & \ & \ & \ & \ \\
      & & & \ & & & \ \\\hline } \\ \\
    \cirtab{%
      & \cball{red}{} & \ \\
      \ & \ \\
      & & \ & \ & \ & \\
      \ & \ & \ & \ & \ \\
      & & & \ & & & \leftball{red}{} \\\hline } &
    \cirtab{%
      & \leftball{red}{} & \ \\
      \ & \ \\
      & & \ & \ & \ & \\
      \ & \ & \ & \ & \ \\
      & & & \ & & \ \\\hline } &
    \cirtab{%
      \ & & \leftball{red}{} \\
      \ & \ \\
      & & \ & \ & \ & \\
      \ & \ & \ & \ & \ \\
      & & & \ & & \ \\\hline } &
    \cirtab{%
      \ & \ \\
      \ & \ \\
      & & \ & \ & \ & \\
      \ & \ & \ & \ & \ \\
      & & & \leftball{red}{} & & \ \\\hline } &
    \cirtab{%
      \ & \ \\
      \ & \ \\
      & & \ & \ & \ & \\
      \ & \ & \ & \ & \ \\
      & & \ & & & \ \\\hline } 
    \end{array}
  \end{displaymath}  
  \caption{\label{fig:rectification}The rectification of a diagram, where the cells right of the first column not column paired to the left are highlighted and the cell to rectify at each step is indicated with arrows.}
\end{figure}

\begin{definition}
  For $T$ a diagram and $i \geq 1$ an integer, the \newword{rectification operator} $\Rect_i$ acts on $T$ as follows: if $T$ has no unpaired cell in column $i+1$, then $\Rect_i(T)=T$; else, move the lowest unpaired cell in column $i+1$ left to column $i$, staying within its row, leaving all other cells unmoved.
  \label{def:KD-rect}
\end{definition}

\begin{figure}[ht]
  \begin{center}
    \begin{tikzpicture}[xscale=1.6,yscale=1.6]
      \node at (9,0)  (A) {$\cirtab{ \ \\ \ & & \\ \ & \ & \ \\ & & \\\hline}$};
      \node at (5,1)  (B) {$\cirtab{ & & \\ \ & \ \\ \ & \ & \ \\ & & \\\hline}$};
      \node at (8,2)  (C) {$\cirtab{ \ \\ & & \\ \ & \ & \ \\ \ & & \\\hline}$};
      \node at (8,1)  (D) {$\cirtab{ \ \\ \ & & \\ \ & \ & \\ & & \ \\\hline}$};
      \node at (4,2)  (E) {$\cirtab{ & & \\ \ & \ \\ \ & \ & \\ & & \ \\\hline}$};
      \node at (9,3)  (F) {$\cirtab{ & & \\ \ \\ \ & \ & \ \\ \ & & \\\hline}$};
      \node at (7,3)  (G) {$\cirtab{ \ \\ & & \\ \ & \ & \\ \ & & \ \\\hline}$};
      \node at (7,2)  (H) {$\cirtab{ \ \\ \ & & \\ \ & & \\ & \ & \ \\\hline}$};
      \node at (3,3)  (I) {$\cirtab{ & & \\ \ & \ \\ \ & & \\ & \ & \ \\\hline}$};
      \node at (4,3)  (J) {$\cirtab{ & & \\ \ & & \\ \ & \ & \ \\ & \ \\\hline}$};
      \node at (8,4)  (K) {$\cirtab{ & & \\ \ \\ \ & \ & \\ \ & & \ \\\hline}$};
      \node at (6,3)  (L) {$\cirtab{ \ \\ \ & & \\ & & \\ \ & \ & \ \\\hline}$};
      \node at (2,4)  (M) {$\cirtab{ & & \\ \ & \ \\ & & \\ \ & \ & \ \\\hline}$};
      \node at (3,4)  (N) {$\cirtab{ & & \\ \ & & \\ \ & \ \\ & \ & \ \\\hline}$};
      \node at (3,5)  (O) {$\cirtab{ & & \\ & & \\ \ & \ & \ \\ \ & \ \\\hline}$};
      \node at (6,4)  (P) {$\cirtab{ \ \\ & & \\ \ & & \\ \ & \ & \ \\\hline}$};
      \node at (2,5)  (Q) {$\cirtab{ & & \\ \ & & \\ & \ \\ \ & \ & \ \\\hline}$};
      \node at (7,5)  (R) {$\cirtab{ & & \\ \ \\ \ & & \\ \ & \ & \ \\\hline}$};
      \node at (2,6)  (S) {$\cirtab{ & & \\ & & \\ \ & \ \\ \ & \ & \ \\\hline}$};
      \draw[thick,blue  ,->](A) -- (D)   node[midway,above]{$1$};
      \draw[thick,blue  ,->](D) -- (H)   node[midway,above]{$1$};
      \draw[thick,blue  ,->](H) -- (L)   node[midway,above]{$1$};
      \draw[thick,blue  ,->](C) -- (G)   node[midway,above]{$1$};
      \draw[thick,blue  ,->](G) -- (P)   node[midway,above]{$1$};
      \draw[thick,blue  ,->](F) -- (K)   node[midway,above]{$1$};
      \draw[thick,blue  ,->](K) -- (R)   node[midway,above]{$1$};
      \draw[thick,blue  ,->](B) -- (E)   node[midway,above]{$1$};
      \draw[thick,blue  ,->](E) -- (I)   node[midway,above]{$1$};
      \draw[thick,blue  ,->](I) -- (M)   node[midway,above]{$1$};
      \draw[thick,blue  ,->](J) -- (N)   node[midway,above]{$1$};
      \draw[thick,blue  ,->](N) -- (Q)   node[midway,above]{$1$};
      \draw[thick,blue  ,->](O) -- (S)   node[midway,above]{$1$};
      \draw[thick,purple,->](L) -- (P)   node[midway,left ]{$2$};
      \draw[thick,purple,->](I) -- (N)   node[midway,left ]{$2$};
      \draw[thick,purple,->](M) -- (Q)   node[midway,left ]{$2$};
      \draw[thick,purple,->](Q) -- (S)   node[midway,left ]{$2$};
      \draw[thick,violet,->](C) -- (F)   node[midway,above]{$3$};
      \draw[thick,violet,->](G) -- (K)   node[midway,above]{$3$};
      \draw[thick,violet,->](P) -- (R)   node[midway,above]{$3$};
    \end{tikzpicture}
    \caption{\label{fig:kohnert-crystal-rectified}The Demazure crystals $\B_{312}(3,2,0)$ (left) and $\B_{4123}(3,1,1,0)$ (right) on the Kohnert diagrams for composition diagrams, with crystal edges $\Ke_1 {\color{blue}\nwarrow}$, $\Ke_2 {\color{purple}\uparrow}$, $\Ke_3 {\color{violet}\nearrow}$.}
  \end{center}
\end{figure}

For example, Fig.~\ref{fig:rectification} shows the steps in the rectification of a diagram, where we choose the rightmost cell to rectify at each step. This choice does not affect the rectified diagram, as we prove in Lemma~\ref{lem:rectification} below. Foreshadowing Theorem~\ref{thm:embed}, Fig.~\ref{fig:kohnert-crystal-rectified} shows the rectification of the Kohnert diagrams from Fig.~\ref{fig:kohnert-crystal}.

Comparing Definition~\ref{def:KD-rect} with Definition~\ref{def:KD-raise}, the rectification of $T$ is obtained by transposing $T$ along the line $y=x$, applying the raising operator (if nonzero), and transposing back. Perhaps, then, it comes as little surprise that rectification and raising operators commute, in the following sense.

\begin{theorem}
  Given a diagram $T$, a row index $r \geq 1$ and a column index $c \geq 1$, $\Ke_r(T) \neq 0$ if and only if $\Ke_r(\Rect_c(T)) \neq 0$, and, in this case, $\Ke_r(\Rect_c(T)) = \Rect_c(\Ke_r(T))$.
  \label{thm:commute}
\end{theorem}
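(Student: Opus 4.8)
The plan is to exploit the fact that $\Ke_r$ reads and alters only rows $r,r+1$ while $\Rect_c$ reads and alters only columns $c,c+1$, so that the two operators can interact only through the four positions of the square window $W=\{(c,r),(c+1,r),(c,r+1),(c+1,r+1)\}$, which I label $A,B,C,D$ (bottom-left, bottom-right, top-left, top-right). I first record the two candidate moves: if $\Ke_r(T)\neq 0$ it lowers the rightmost row-unpaired cell of row $r+1$, say in column $\gamma$, from $(\gamma,r+1)$ to $(\gamma,r)$; and $\Rect_c$ shifts the lowest column-unpaired cell of column $c+1$, say in row $\rho$, from $(c+1,\rho)$ to $(c,\rho)$ (and does nothing if there is no such cell).

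The easiest case is $\gamma\notin\{c,c+1\}$ together with $\rho\notin\{r,r+1\}$. Here $\Ke_r$ leaves columns $c,c+1$ untouched, so the column pairing, and hence the entire action of $\Rect_c$, is literally the same on $T$ and on $\Ke_r(T)$; symmetrically $\Rect_c$ leaves rows $r,r+1$ untouched, so $\Ke_r$ acts identically on $T$ and on $\Rect_c(T)$. The two moves then act on disjoint positions, so they commute, and neither operator's nonvanishing is affected by the other.

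The substance of the proof is the observation that $\Rect_c$ can change the row pairing only when it moves a window cell (that is, $\rho\in\{r,r+1\}$), and $\Ke_r$ can change the column pairing only when it moves a window cell ($\gamma\in\{c,c+1\}$); in each such instance the change amounts to sliding a single cell one row down or one column left, i.e.\ an adjacent transposition of two cells of opposite type at $W$ in the other operator's pairing sequence. The key lemma I would prove is that such a slide never disturbs the \emph{other} operator's distinguished unpaired cell. Concretely, for $\gamma\in\{c,c+1\}$ and $\rho\notin\{r,r+1\}$ I would trace the column pairing through the move $(\gamma,r+1)\to(\gamma,r)$ and check that every column-$(c+1)$ cell retains its paired/unpaired status with any relocation confined to rows $r,r+1$; since the hypothesis $\rho\notin\{r,r+1\}$ together with ``lowest'' keeps the distinguished cell strictly below (or above) the window, $\Rect_c$ still moves the same cell, and the two composites again reduce to disjoint moves. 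This bookkeeping is the main obstacle: one must rule out the seemingly dangerous configurations in which a slide would appear to free or capture an unpaired cell and thereby alter the partner's move. It is dispatched by noting that the very conditions enabling $\Ke_r$ to move a window cell constrain the column pairing just enough (for instance, lowering $C$ forces the cell $D$ to its right to be row-paired, hence $C,D$ to be column-paired, so breaking that pair merely relocates an already-paired cell). The transposed argument handles $\rho\in\{r,r+1\}$ with $\gamma\notin\{c,c+1\}$.

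It remains to treat $\gamma\in\{c,c+1\}$ and $\rho\in\{r,r+1\}$, where both operators act inside $W$. I would enumerate the four combinations of moves. Two are immediately impossible, since each demands that a single window position be simultaneously occupied and empty: $\Ke_r\colon C\to A$ paired with $\Rect_c\colon D\to C$ needs $C$ both present and absent, and $\Ke_r\colon D\to B$ paired with $\Rect_c\colon B\to A$ needs $B$ both present and absent. The two surviving combinations, $\{C\to A,\ B\to A\}$ and $\{D\to B,\ D\to C\}$, are the genuine interactions; recomputing the pairings after the first move shows that in either order the resulting diagram is the same one, in which the corner position $(c+1,r+1)$ has been vacated and $(c,r)$ filled, all other window cells unchanged (realized through $B$ in one order and through $C$ in the other). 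Finally, the equivalence $\Ke_r(T)\neq 0 \iff \Ke_r(\Rect_c(T))\neq 0$ follows from the same pairing-preservation statements, since they show $\Rect_c$ preserves the existence of an unpaired cell in row $r+1$. Assembling the cases yields both the nonvanishing equivalence and the identity $\Ke_r(\Rect_c(T))=\Rect_c(\Ke_r(T))$.
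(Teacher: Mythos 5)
Your proposal is correct and follows essentially the same route as the paper: an exhaustive case analysis on the relative positions of the two moved cells, with pairing-preservation bookkeeping showing each operator's distinguished cell survives the other's slide, and your two surviving window interactions (where the composites move different cells in the two orders yet produce the same diagram, with $(c+1,r+1)$ vacated and $(c,r)$ filled) are exactly the paper's hard subcases $j=r+1$ and $j=r$. The only cosmetic difference is organizational: the paper splits off the case $\Rect_c(T)=T$ first and settles it by the counting inequality of Lemma~\ref{lem:2.2}, then cases on the row $j$ of the rectified cell, whereas you case on window membership of the two moved cells and handle the trivial-rectification situation uniformly via your pairing-status lemma.
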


\begin{proof}
  If $\Rect_c(T) = T$, then the first assertion is trivial. For the second, suppose $\Ke_r$ acts on $T$ by moving the cell in position $(i,r+1)$ down to position $(i,r)$. If $i \neq c,c+1$, then cells in columns $c,c+1$ remain unmoved, so $\Rect_c(\Ke_r(T)) = \Ke_r(T) = \Ke_r(\Rect_c(T))$ as desired. If $i=c+1$, then the left hand side of Eq.~\eqref{e:2.2} is the same for $T$ as for $\Ke_r(T)$ while the right hand side is one smaller for $r+1$ and the same for other rows. In particular, since Eq.~\eqref{e:2.2} holds for $T$ at column $c$, it also holds for $\Ke_r(T)$ at column $c$, and so again $\Rect_c(\Ke_r(T)) = \Ke_r(T) = \Ke_r(\Rect_c(T))$. The interesting case occurs at $i=c$. If there is no cell in position $(c+1,r+1)$ in $T$, then the column $c$-pairing for $T$ and $\Ke_r(T)$ will be the same, once again giving $\Rect_c(\Ke_r(T)) = \Ke_r(T) = \Ke_r(\Rect_c(T))$. Otherwise, since $\Ke_r$ acts on the cell in position $(c,r+1)$, it must not be $r$-paired in $T$ and so there is no cell in position $(c,r)$. Moreover, the cell in position $(c+1,r+1)$ must be $r$-paired, and it must be with the cell in position $(c+1,r)$. Now, since Eq.~\eqref{e:2.2} holds for $T$ in column $c$ \emph{for all rows}, the inequality at row $r$ forces a \emph{strict} inequality at row $r+1$. In passing to $\Ke_r(T)$, the right hand side of Eq.~\eqref{e:2.2} for column $c$ is unchanged and the left hand side decreases by $1$ at row $r+1$ and remains the same elsewhere. Therefore the inequality holds for $\Ke_r(T)$ in column $c$, once again ensuring $\Rect_c(\Ke_r(T)) = \Ke_r(T) = \Ke_r(\Rect_c(T))$.

  Now suppose $\Rect_c$ acts nontrivially on $T$, say moving the cell in position $(c+1,j)$ left to position $(c,j)$ for some row index $j$. If $\Ke_r(T) = 0$, then, using the relationship between the definitions for raising and rectification, transposing the above argument ensures $\Ke_r(\Rect_c(T)) = 0$ as well, proving the first statement. For the second, we have four main cases to consider based on the position of $j$ relative to $r+1,r$.

    \textsc{Case} ($j>r+1$): Let $x$ denote the cell of $T$ in position $(c+1,j)$ that moves right to position $(c,j)$ under $\Rect_c$. Suppose $\Ke_r$ acts on $T$ by moving the cell $z$ in position $(i,r+1)$ down to position $(i,r)$. We have four subcases depicted in Fig.~\ref{fig:commute-r++}.
  \begin{itemize}
  \item If $i \neq c,c+1$, then the movement of $x$ does not affect rows $r,r+1$ and the movement of $z$ does not affect columns $c,c+1$, so $\Rect_c(\Ke_r(T)) = \Ke_r(\Rect_c(T))$. 
  \item (Fig.~\ref{fig:commute-r++} left) If $i=c+1$, then since $j>r+1$, $z$ is below $x$ and so is column $c$-paired in $T$, say with $y$, and since $z$ is in column $c+1$ and moves down, it remains column $c$-paired with $y$ in $\Ke_r(T)$. Thus $\Rect_c$ acts on $x$ in both $T$ and $\Ke_r(T)$ without affecting $r$-pairings, so $\Rect_c(\Ke_r(T)) = \Ke_r(\Rect_c(T))$. 
  \item (Fig.~\ref{fig:commute-r++} middle) If $i=c$ and there is no cell right of $z$ in position $(c+1,r+1)$, then the column $c$-pairings are the same for $\Ke_r(T)$ as for $T$, and so $\Rect_c(\Ke_r(T)) = \Ke_r(\Rect_c(T))$. 
  \item (Fig.~\ref{fig:commute-r++} right) If $i=c$ and there is a cell $y$ right of $z$ in position $(c+1,r+1)$, then since $z$ is the rightmost cell not $r$-paired in $T$, $y$ must be $r$-paired with the cell $w$ below it in position $(c+1,r)$. Since $\Rect_c$ acts on $T$ at $x$, the cell $w$ must be column $c$-paired with some cell $v$ above $z$. Then $v$ and $y$ are column $c$-paired in $\Ke_r(T)$, ensuring $\Rect_c$ acts on $x$ in $\Ke_r(T)$ as well. Thus $\Rect_c(\Ke_r(T)) = \Ke_r(\Rect_c(T))$. 
  \end{itemize}

    \begin{figure}[ht]
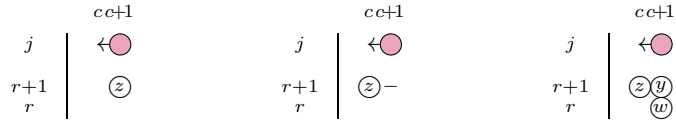

    \begin{displaymath}
      \begin{array}{rl}
        & \nulltab{ & c & c\!+\!1 } \\
        \nulltab{j \\ \\ r+1 \\ r} & \vline\nulltab{ & & \leftball{purple}{} \\ \\ & & \circify{z} \\ & & }
      \end{array}
      \hspace{5em}
      \begin{array}{rl}
        & \nulltab{ & c & c\!+\!1 } \\
        \nulltab{j \\ \\ r+1 \\ r} & \vline\nulltab{ & & \leftball{purple}{} \\ \\ & \circify{z} & - \\ & & }
      \end{array}
      \hspace{5em}
      \begin{array}{rl}
        & \nulltab{ & c & c\!+\!1 } \\
        \nulltab{j \\ \\ r+1 \\ r} & \vline\nulltab{ & & \leftball{purple}{} \\ \\ & \circify{z} & \circify{ y } \\ & & \circify{w} }
      \end{array}
    \end{displaymath}
    \caption{\label{fig:commute-r++}The subcases when $\Rect_c$ acts on $T$ by moving the cell $x$ in position $(c+1,j)$ left to position $(c,j)$, and $\Ke_r$ acts on $T$ by moving the cell $z$ in position $(i,r+1)$ down to position $(i,r)$.}
  \end{figure}
  
  \textsc{Case} ($j=r+1$): Let $x$ denote the cell of $T$ in position $(c+1,r+1)$ that moves left under $\Rect_c$. We have three subcases depicted in Fig.~\ref{fig:commute-r+}.
  \begin{itemize}
  \item (Fig.~\ref{fig:commute-r+} left) If there is no cell below $x$ in position $(c+1,r)$, but $x$ is  $r$-paired in $T$, then $x$ is $r$-paired to the same cell in $\Rect_c(T)$, making the action of $\Ke_r$ the same for both. Thus $\Rect_c(\Ke_r(T)) = \Ke_r(\Rect_c(T))$.
  \item (Fig.~\ref{fig:commute-r+} left) If there is no cell below $x$ in position $(c+1,r)$ and $x$ is not $r$-paired in $T$, then there is also no cell in position $(c,r)$. Therefore $x$ remains unpaired in $\Rect_c(T)$ as well, and so $\Ke_r$ acts on the same cell in both $T$ and $\Rect_c(T)$. Moreover, if $\Ke_r$ acts on $T$ by moving $x$ down, then since there is no cell in position $(c,r)$, $x$ remains the highest cell in $\Ke_r(T)$ with no column $c$-pairing, and so $\Rect_c$ acts on $\Ke_r(T)$ by pushing $x$ left. Either way, $\Rect_c(\Ke_r(T)) = \Ke_r(\Rect_c(T))$.
  \item (Fig.~\ref{fig:commute-r+} right) Finally, if there is a cell below $x$ in position $(c+1,r)$, then $x$ is $r$-paired with this cell. Moreover, the cell below $x$ must be column $c$-paired, and so there must be a cell $z$ in position $(c,r)$ as well. If $z$ is $r$-paired in $T$, say with some cell $y$ right of $x$, then the cell in position $(c+1,r)$ is $r$-paired with $y$ in $\Rect_c(T)$. Consequently, the $r$-pairings are the same for $T$ and $\Rect_c(T)$, and $\Ke_r$ acts in some column $i \neq c,c+1$, showing once again $\Rect_c(\Ke_r(T)) = \Ke_r(\Rect_c(T))$.
  \end{itemize}
    
    \begin{figure}[ht]
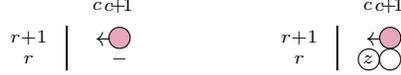

    \begin{displaymath}
      \begin{array}{rl}
        & \nulltab{ & c & c\!+\!1 } \\
        \nulltab{r+1 \\ r} & \vline\nulltab{ & & \leftball{purple}{} \\ & & - }
      \end{array}
      \hspace{5em}
      \begin{array}{rl}
        & \nulltab{ & c & c\!+\!1 } \\
        \nulltab{r+1 \\ r} & \vline\nulltab{ & & \leftball{purple}{} \\ & \circify{z} & \circify{ \ } }
      \end{array}
    \end{displaymath}
    \caption{\label{fig:commute-r+}The subcases when $\Rect_c$ acts on $T$ by moving the cell in position $(c+1,r+1)$ left to position $(c,r+1)$.}
  \end{figure}
  
  \textsc{Case} ($j=r$): Let $x$ denote the cell of $T$ in position $(c+1,r)$ that moves left under $\Rect_c$. We have four subcases depicted in Fig.~\ref{fig:commute-r}.
  \begin{itemize}
  \item (Fig.~\ref{fig:commute-r} left) If there is no cell above $x$ in position $(c+1,r+1)$, then either $x$ is not $r$-paired or it is $r$-paired with some cell strictly to its right. In either case, moving $x$ left to position $(c,r)$ does not change to which cell, if any, it is $r$-paired, and so the $r$-pairing is the same on $T$ as on $\Rect_c(T)$. Furthermore, since $x$ is not column $c$-paired, there is no cell in position $(c,r)$ nor in position $(c,r+1)$, since then $x$ would necessarily column pair with it. Therefore $\Ke_r$ acts on a column $i \neq c,c+1$, and so $\Rect_c(\Ke_r(T)) = \Ke_r(\Rect_c(T))$.
  \item (Fig.~\ref{fig:commute-r} middle) If there is a cell above $x$, then necessarily it is $r$-paired with $x$. If there is no cell in position $(c,r+1)$, then the cells in positions $(c,r)$ and $(c+1,r+1)$ are $r$-paired in $\Rect_c(T)$, ensuring that $\Ke_r$ acts in the same way in both since it cannot act in columns $c,c+1$. Therefore $\Rect_c(\Ke_r(T)) = \Ke_r(\Rect_c(T))$.
  \item (Fig.~\ref{fig:commute-r} right) Again with the cell above $x$ being $r$-paired with $x$, suppose there is a cell $z$ in position $(c,r+1)$. If $z$ is $r$-paired in $T$, say with some cell $y$, then $y$ is $r$-paired with the cell above $x$ in $\Rect_c(T)$, leaving all $r$-pairings unchanged and the action of $\Ke_r$ not in columns $c,c+1$, so $\Rect_c(\Ke_r(T)) = \Ke_r(\Rect_c(T))$.
  \item (Fig.~\ref{fig:commute-r} right) Finally, suppose the cell above $x$ exists, and the cell $z$ in position $(c,r+1)$ exists, and $z$ is not $r$-paired in $T$. If there is another cell right of $z$ not $r$-paired, then the action of $\Ke_r$ will be on that cell in both $T$ and $\Rect_c(T)$ giving $\Rect_c(\Ke_r(T)) = \Ke_r(\Rect_c(T))$. Otherwise, $\Ke_r$ acts on $T$ by moving $z$ down to position $(c,r)$, and then $\Rect_c$ acts on $\Ke_r(T)$ by moving the cell above $x$ left to position $(c,r+1)$. In the other direction, $\Ke_r$ acts on $\Rect_c(T)$ by moving the cell above $x$ down to position $(c+1,r)$. The end results agree, and so $\Rect_c(\Ke_r(T)) = \Ke_r(\Rect_c(T))$.
  \end{itemize}

    \begin{figure}[ht]
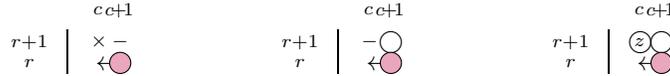

    \begin{displaymath}
      \begin{array}{rl}
        & \nulltab{ & c & c\!+\!1 } \\
        \nulltab{r+1 \\ r} & \vline\nulltab{ & \times & - \\ & & \leftball{purple}{} }
      \end{array}
      \hspace{5em}
      \begin{array}{rl}
        & \nulltab{ & c & c\!+\!1 } \\
        \nulltab{r+1 \\ r} & \vline\nulltab{ & - & \circify{ \ } \\ & & \leftball{purple}{} }
      \end{array}
      \hspace{5em}
      \begin{array}{rl}
        & \nulltab{ & c & c\!+\!1 } \\
        \nulltab{r+1 \\ r} & \vline\nulltab{ & \circify{ z } & \circify{ \ }  \\ & & \leftball{purple}{} }
      \end{array}
    \end{displaymath}
    \caption{\label{fig:commute-r}The subcases when $\Rect_c$ acts on $T$ by moving the cell in position $(c+1,r)$ left to position $(c,r)$.}
  \end{figure}
  
  \textsc{Case} ($j<r$): The cell in position $(c+1,j)$ is the lowest cell not column $c$-paired in $T$, and since $r>j$, this holds true for $\Ke_r(T)$ as well, giving $T$ and $\Ke_r(T)$ the same action of $\Rect_c$. Moreover, since $j<r$, the cells in rows $r,r+1$ coincide for $T$ and for $\Rect_c(T)$, giving them the same $r$-pairings and so, too, the same action of $\Ke_r$. Therefore $\Rect_c(\Ke_r(T)) = \Ke_r(\Rect_c(T))$ in this case as well.
\end{proof}

While rectification can be performed in basic steps, we often wish to consider the rectification of an entire column of a diagram. Extending notation, for $T$ a diagram and $c$ a column index, let $\Rect_{c}^*(D)$ denote $\Rect_{c}^m(D)$ for any (equivalently, the smallest) $m$ such that $\Rect_{c}(\Rect_{c}^m(D)) = \Rect_{c}^m(D)$. With this notation, we observe the southwest property of a diagram is preserved under rectification.

\begin{proposition}
  For $D$ a southwest diagram and $c$ any column index, $\Rect_{c}^*(D)$ is also southwest. In particular, $\rect(D)$ is a composition diagram.
  \label{prop:sw}
\end{proposition}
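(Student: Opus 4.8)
The plan is to determine the precise net effect of the full column rectification $\Rect_c^*$ on a southwest diagram $D$, to verify directly that the southwest property survives this cumulative action, and then to read off the composition-diagram statement from the interaction of Eq.~\eqref{e:2.2} with Definition~\ref{def:diagram-sw}. A guiding warning is that a \emph{single} step $\Rect_c$ need not preserve southwest: moving one unpaired cell of column $c+1$ to the left can strand a northwest/southeast pair whose required corner was exactly the vacated cell. So I would not attempt a step-by-step induction, but instead analyze $\Rect_c^*$ as a whole.

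First I would describe the action of $\Rect_c^*$ on a southwest $D$. Let $a$ be the topmost occupied row of column $c$ (with $a=0$ if column $c$ is empty). By Definition~\ref{def:diagram-sw}, every cell of column $c+1$ in a row weakly below $a$ has a cell of column $c$ in the same row, hence is column $c$-paired, while the cells of column $c+1$ strictly above row $a$ have no cell of column $c$ above them and so are exactly the unpaired cells. Running $\Rect_c$ repeatedly, these high cells migrate into column $c$ from the bottom upward: each move raises the top of column $c$ past one high cell, turning the next-lowest high cell into the new lowest unpaired cell, so the column $c$-pairing stays transparent even though the intermediate diagrams may no longer be southwest. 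The upshot is that $\Rect_c^*(D)$ leaves every column other than $c$ and $c+1$ unchanged, deletes from column $c+1$ all cells above row $a$, and inserts each of them (in the same row) into column $c$.

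With this net effect in hand, I would verify that $D^\ast := \Rect_c^*(D)$ is southwest. Since only column $c$ gained cells and only column $c+1$ lost cells, any failure of Definition~\ref{def:diagram-sw} in $D^\ast$ must feature a changed cell as the missing corner, as the northwest cell, or as the southeast cell, and each case closes using the southwest property of $D$. If the missing corner is a deleted cell $(c+1,b)$, a violating northwest cell would have to lie in column $c+1$ above row $b$, but column $c+1$ of $D^\ast$ has no cell above row $a<b$, so the case is vacuous. If an inserted cell $(c,b)$ is the southeast cell with northwest cell $(p,P_r)$ in some column $p<c$, then applying southwest in $D$ to $(p,P_r)$ and $(c+1,b)$ yields the corner $(p,b)\in D$, which sits in an unchanged column. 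If an inserted cell $(c,b)$ is the northwest cell with southeast cell $(q,s)$ where $q>c$, then either $q=c+1$ (so $(c+1,s)=(q,s)\in D$) or $q>c+1$ (so southwest in $D$ applied to $(c+1,b)$ and $(q,s)$ gives $(c+1,s)\in D$); in both cases $(c+1,s)\in D$ with $s<b$, and whether $s>a$ (so this cell was moved into column $c$) or $s\le a$ (so its same-row partner $(c,s)$ lies in column $c$), the corner $(c,s)$ lies in $D^\ast$. Every remaining potential violation has its corner in an unchanged column and so persists from $D$.

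Finally I would deduce the composition-diagram statement. By Lemma~\ref{lem:rectification} the fully rectified diagram $\rect(D)$ is independent of the order of moves, so it may be computed as an iterated sequence of full column rectifications $\Rect_{c}^*$; by the first part each of these preserves southwest, hence $\rect(D)$ is southwest and, being fully rectified, satisfies Eq.~\eqref{e:2.2} in every column. A rectified southwest diagram is necessarily left-justified: if $(c,r)\in\rect(D)$ with $c\ge 2$ but $(c-1,r)\notin\rect(D)$, then Eq.~\eqref{e:2.2} for columns $c-1,c$ at row $r$ forces a cell $(c-1,s)$ with $s>r$, and southwest applied to $(c-1,s)$ and $(c,r)$ forces $(c-1,r)\in\rect(D)$, a contradiction. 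Since a left-justified diagram is the composition diagram of its weight, $\rect(D)=\D(\wt(\rect(D)))$. The main obstacle is the very first step: recognizing that single rectification steps break southwest and therefore that one must track the cumulative action $\Rect_c^*$ and prove that all cells above the top of column $c$ migrate into column $c$. Once that net effect is established, the southwest verification and the left-justification argument become routine applications of Definition~\ref{def:diagram-sw} and Eq.~\eqref{e:2.2}.
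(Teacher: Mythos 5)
Your proof is correct, but it is organized differently from the paper's. The paper argues step-by-step: it classifies the two ways a single application of $\Rect_c$ could create a southwest violation, rules out case (a) (a cell $y$ moving over a paired cell $x$ with no cell immediately left of $x$) directly, and for case (b) --- exactly the vacated-corner pitfall you flag at the outset --- it shows that no cell above the mover in column $c+1$ can be column $c$-paired, so all such cells also migrate and the temporary violation is healed by the end of $\Rect_c^*$ (``rectifying all cells above $x$ restores the southwest condition''). You instead prove up front a closed-form description of the net effect of $\Rect_c^*$ on a southwest diagram --- every cell of column $c+1$ strictly above the top occupied row $a$ of column $c$ is unpaired and migrates left in its row, all else is fixed --- and then verify Definition~\ref{def:diagram-sw} for the terminal diagram by an exhaustive analysis of which changed cell (deleted corner, inserted northwest cell, inserted southeast cell) could witness a violation. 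The two proofs rest on the same structural fact (in a southwest diagram the unpaired cells of column $c+1$ are precisely those above the top of column $c$, which I verified: same-row partners exist weakly below row $a$ by southwest applied with $(c,a)$, and no candidate partner exists above), but your net-effect formulation makes the ``temporary violations heal'' phenomenon fully rigorous where the paper relies on a terse remark, at the cost of a longer case check; conversely the paper's local analysis is shorter. For the second assertion your argument via Eq.~\eqref{e:2.2} is the same in substance as the paper's column-pairing argument. Two cosmetic points: the notation $(p,P_r)$ in your case (ii-a) should read $(p,r_2)$, and your appeal to Lemma~\ref{lem:rectification} is a forward reference in the paper's ordering, but it is not circular (that lemma's proof does not use this proposition), and the paper's own ``in particular'' clause implicitly needs the same iterated-sweep observation.
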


\begin{proof}
  A violation of the southwest property can happen under rectification in one of two ways: (a) cells $y$ above $x$ in column $c+1$ could have $y$ move left with no cell immediately left of $x$; or (b) cells $y$ above $x$ in column $c+1$ could have $x$ move left with some cell right of $x$ in its row. For (a), we must have $x$ column $c$-paired else it would move under rectification instead of $y$, and so there exists some cell $w$ in column $c$ weakly above the row of $x$. Since there is no cell immediately left of $x$, $w$ lies strictly above $x$, and so $w,x$ violate the southwest condition for $D$, a contradiction. For (b), if $z$ lies strictly right of $x$ in its row and $x$ moves left under rectification, then if any cell $y$ above $x$ in column $c+1$ is column $c$-paired, there exists a cell $w$ in column $c$ strictly above the row of $x$. In this case, $w,z$ violate the southwest condition for $D$, a contradiction. Thus rectifying all cells above $x$ restores the southwest condition, proving the first statement.

  Now consider a diagram $D$ that is both southwest and rectified. Suppose $D$ has a cell $x$ in row $r$, column $c+1$, where $c \geq 1$. Since $D$ is rectified, $x$ must be column $c$-paired with some cell $y$ in some row $s \geq r$, column $c$. Since $D$ is southwest, we must have $y$ in row $r$, column $c$. Therefore $D$ is a composition diagram. 
\end{proof}

As we shall see in Section~\ref{sec:re-label}, it often happens that $T \in \KD(D)$ but $\Rect_{c}^*(T) \not\in \KD(\Rect_{c}^*(D))$, which is to say rectification does not commute with Kohnert moves.

\subsection{Highest weights}
\label{sec:hwt}

Using the commutativity between rectification and raising operators, we can now prove the uniqueness of highest weight elements for each connected component of the Kohnert crystal of a southwest diagram by showing they rectify to composition diagrams of partition weight.

\begin{lemma}
  Let $D$ be a southwest diagram and $U \in \KD(D)$ such that $\Ke_r(U) = 0$ for all $r \geq 1$. Then $\wt(U)$ is a partition and, if $U' = \Rect_{c_k} \circ \cdots \Rect_{c_1}(U)$ is a rectified diagram, then $U'$ is a composition diagram of partition weight $\wt(U)$. In particular, $U'$ is independent of the column sequence $c_k,\ldots,c_1$.
  \label{lem:hwt-rect}
\end{lemma}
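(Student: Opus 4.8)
The plan is to dispatch the weight statement first, then reduce everything to a single structural claim about rectified highest-weight diagrams, which I prove by downward induction on rows. For the weight, I would argue directly from the pairing rule: since $\Ke_r(U)=0$ means there is no unpaired cell in row $r+1$ (Definition~\ref{def:KD-raise}), every cell of row $r+1$ is $r$-paired (Definition~\ref{def:KD-pair}) with a distinct cell of row $r$, and as $r$-pairing is a partial matching between the two rows this forces $\wt(U)_{r+1}\le\wt(U)_r$ for all $r$; hence $\wt(U)$ is weakly decreasing, i.e. a partition. To reduce the rectification statement, note that each basic step $\Rect_c$ moves a cell only within its row, so $\wt(U')=\wt(U)=:\lambda$; and applying Theorem~\ref{thm:commute} one step at a time along $c_1,\ldots,c_k$ gives $\Ke_r(\Rect_c(\cdot))\neq 0 \iff \Ke_r(\cdot)\neq 0$, so $\Ke_r(U')=0$ for all $r$. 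It therefore suffices to prove the structural claim that \emph{a rectified diagram $U'$ with $\Ke_r(U')=0$ for all $r$ and $\wt(U')=\lambda$ a partition equals the Young diagram $\D(\lambda)$}; since $\D(\lambda)$ depends only on $\lambda=\wt(U)$, this simultaneously yields independence of the column sequence.

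For the structural claim I would show by downward induction on the row index that every row is left-justified, i.e. row $s$ consists of columns $1,\ldots,\lambda_s$, whence $U'=\D(\lambda)$. Because $\wt(U')$ is a partition, the nonempty rows are exactly $1,\ldots,M$ where $M=\ell(\lambda)$. For the base case, at height $r=M$ the left-hand side of Eq.~\eqref{e:2.2} is simply the indicator of a cell in the top row $M$, and its weak decrease in the column index forces row $M$ to be $\{1,\ldots,\lambda_M\}$.

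For the inductive step, assume rows $\ge r$ are left-justified and consider row $r-1$. Here two hypotheses combine. First, \emph{highest weight} forces $\{1,\ldots,\lambda_r\}$ to be contained in the columns of row $r-1$: if $c\le\lambda_r$ were the least missing column, then columns $1,\ldots,c-1$ of row $r-1$ would each be same-column $r$-paired with their row-$r$ cells, leaving the cell $(c,r)$ with no available leftward partner and hence unpaired, contradicting $\Ke_{r-1}(U')=0$. Second, \emph{rectification together with the inductive hypothesis} rules out any gap farther right: a missing $(e,r-1)$ with a cell $(d,r-1)$ for $e<d$ would, by Eq.~\eqref{e:2.2} applied in column $e$, force a cell of column $e$ strictly above row $r-1$; by the inductive hypothesis that cell lies in some row $s\ge r$ with $e\le\lambda_s\le\lambda_r$, and then the first part shows $(e,r-1)\in U'$, a contradiction. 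Together these show row $r-1=\{1,\ldots,\lambda_{r-1}\}$, completing the induction.

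The main obstacle is precisely this interlocking of the two conditions at the inductive step: neither hypothesis alone suffices, since highest weight permits a cell overhanging to the right while rectification permits gaps filled from above, and each must be used to defuse exactly the configuration the other allows. Once the structural claim holds, the lemma follows at once: $U'=\D(\lambda)$ is a composition diagram of partition weight $\wt(U)$, and being equal to $\D(\wt(U))$ it is independent of the column sequence $c_k,\ldots,c_1$.
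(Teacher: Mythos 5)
Your proof is correct, and at the decisive step it takes a genuinely different route from the paper's. The paper's proof is three lines long and leans on prior machinery: since $U'$ is rectified it is fixed by every $\Rect_c$, Theorem~\ref{thm:commute} transfers $\Ke_i(U') = \Ke_i(U) = 0$, and then Theorem~\ref{thm:crystal-dem} --- the Assaf--Schilling Demazure crystal structure on semistandard key tableaux, transported to $\KD(\D(\comp{a}))$ via the bijection $\psi$ --- is invoked to conclude that a rectified highest weight diagram must be a composition diagram of partition weight; weight preservation under $\Rect_c$ then yields both that $\wt(U)$ is a partition and the independence of the column sequence. You use Theorem~\ref{thm:commute} in exactly the same way, but you replace the citation of Theorem~\ref{thm:crystal-dem} with a self-contained structural argument: a downward induction on rows showing that any rectified diagram annihilated by all $\Ke_r$ equals the left-justified diagram $\D(\lambda)$, where highest-weightedness forces columns $1,\ldots,\lambda_r$ into row $r-1$ (the least missing column $c \leq \lambda_r$ would leave $(c,r)$ with no leftward partner, hence unpaired) and Eq.~\eqref{e:2.2} together with the inductive hypothesis excludes any gap farther right; you also extract the partition condition on $\wt(U)$ directly from the injectivity of $r$-pairing rather than obtaining it as a byproduct. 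Both arguments are sound, and yours buys something real: it is elementary, avoids $\psi$ and the key-tableaux crystal theorem entirely, and in fact never uses the southwest hypothesis or the assumption $U \in \KD(D)$, so it establishes the conclusion for an arbitrary diagram $U$ with $\Ke_r(U) = 0$ for all $r$ --- a strictly more general statement, at the cost of roughly a page of case analysis the paper dispatches by citation. Two cosmetic points to fix: the same-column pairings between rows $r-1$ and $r$ are $(r-1)$-pairings, not ``$r$-pairings,'' and in ruling out a gap $(e,r-1)$ with $(d,r-1)$ present you need Eq.~\eqref{e:2.2} chained across columns $e, e+1, \ldots, d-1$ at threshold $r-1$, not merely ``applied in column $e$.''
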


\begin{proof}
  By Lemma~\ref{lem:2.2} and the definition of the rectification operators, $\Rect_c(U') = U'$ for all column indices $c \geq 1$. By Theorem~\ref{thm:commute}, we have $\Ke_i(U') = \Ke_i(U) = 0$ for all $i \geq 0$, ensuring $U'$ is also a highest weight element. Thus, by Theorem~\ref{thm:crystal-dem}, $U'$ is a composition diagrams of partition weight. However, since rectification operators do not change the rows in which the cells lie, we must have $\wt(U)=\wt(U')$.
\end{proof}

\begin{theorem}
  Let $D$ be a southwest diagram and $T \in \KD(D)$. Then there exists a unique $U \in \KD(D)$ such that $U = \Ke_{r_m} \circ \cdots \circ \Ke_{r_1} (T)$ for some sequence of row indices $r_1,\ldots,r_m$ and such that $\Ke_r(U) = 0$ for all row indices $r \geq 1$.
  \label{thm:hwt-Kohnert}
\end{theorem}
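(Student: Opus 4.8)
The plan is to prove existence by a monovariant and uniqueness by transporting the problem, via rectification, into a genuine highest weight crystal. For existence I would track the statistic $N(T)=\sum_{(c,r)\in T} r$, the sum of the row indices over all cells of $T$. Each raising operator $\Ke_i$ moves a single cell from row $i+1$ down to row $i$, so $N$ strictly decreases by $1$ under every nonzero $\Ke_i$. Since $N$ is a nonnegative integer, any sequence of raising operators applied to $T$ terminates, necessarily at a diagram $U$ with $\Ke_r(U)=0$ for all $r\ge 1$. By Theorem~\ref{thm:closed} each intermediate diagram, and hence $U$, lies in $\KD(D)$, which produces a highest weight element of the required form.

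For uniqueness the key tool is Theorem~\ref{thm:commute}: rectification commutes with the raising operators, including the equivalence $\Ke_r(T)\neq 0$ iff $\Ke_r(\Rect_c(T))\neq 0$. Iterating over columns, full rectification $\rect$ commutes with every $\Ke_r$ and sends highest weight diagrams to highest weight diagrams. Thus if $U=\Ke_{r_m}\circ\cdots\circ\Ke_{r_1}(T)$ is highest weight, then $\rect(U)=\Ke_{r_m}\circ\cdots\circ\Ke_{r_1}(\rect(T))$ is a highest weight element reachable from $\rect(T)$ by raising. Now $\rect(T)$ is rectified, so it lies in $\KD(\D(\comp{a}))$ for some weak composition $\comp{a}$ sorting to a partition $\lambda$; by the bijection $\psi$ and Theorem~\ref{thm:crystal-dem}, the set $\KD(\D(\comp{a}))$ together with the operators $\Ke_i$ is the Demazure crystal $\B_w(\lambda)$, a connected highest weight crystal in which the raising operators stay inside (Theorem~\ref{thm:closed}, since composition diagrams are southwest) and carry every element to the \emph{unique} highest weight element. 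Consequently $\rect(U)$ equals this one element $V$ for every highest weight $U$ reachable from $T$; in particular, by Lemma~\ref{lem:hwt-rect}, all such $U$ share the partition weight $\lambda=\wt(V)$ and the rectification $\rect(U)=V=\D(\lambda)$.

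The remaining, and hardest, step is to upgrade $\rect(U)=\rect(U')=V$ to $U=U'$. This does not follow from highest weight-ness alone: a single cell in position $(2,1)$ is a highest weight diagram that is not rectified, so a highest weight element is not in general recovered from its rectification. What saves us is the southwest hypothesis, and the plan is to show that $\rect$ is injective on $\KD(D)$ for $D$ southwest, equivalently that each connected component of the Kohnert crystal embeds via rectification into a single connected highest weight crystal. Proposition~\ref{prop:sw} is the mechanism, since it propagates the southwest condition through rectification and thereby constrains the horizontal placement of cells that the row operators and Lemma~\ref{lem:hwt-rect} leave undetermined; this is precisely what forbids two distinct highest weight diagrams from sharing a rectification within the set reachable from a common $T$. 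I expect establishing this injectivity to be the main obstacle, and it is the one place where the southwest hypothesis is essential.

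Alternatively, since the monovariant $N$ already guarantees termination, uniqueness reduces by a Newman-type argument to local confluence of the raising operators. For $|i-j|\ge 2$ the operators $\Ke_i$ and $\Ke_j$ act on disjoint pairs of rows and therefore commute, closing the diamond immediately; the substantive case is $j=i+1$, where $\Ke_i$ and $\Ke_{i+1}$ share row $i+1$ and the diamond must be closed by a direct analysis of the pairing rule of Definitions~\ref{def:KD-pair} and~\ref{def:KD-raise}, in the spirit of the case analysis of Theorem~\ref{thm:commute}. On this route the adjacent-index confluence is the main obstacle.
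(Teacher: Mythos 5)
Your existence argument via the monovariant $N(T)=\sum_{(c,r)\in T}r$ is correct, and in fact more explicit than the paper, which leaves termination implicit; your reduction of uniqueness also tracks the paper's route faithfully up to a point: rectify, invoke the commutation of Theorem~\ref{thm:commute}, and use the Demazure crystal structure on rectified diagrams (the paper packages exactly this as Lemma~\ref{lem:hwt-rect}) to conclude $\rect(U)=\rect(U')$. Your observation that highest-weightness alone does not recover $U$ from $\rect(U)$ --- your single cell at $(2,1)$ is a good witness --- correctly identifies the pressure point. But there the proposal stops: you announce a ``plan'' to prove injectivity of $\rect$ on components and defer it as ``the main obstacle,'' so the decisive step is never carried out. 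Moreover, the tool you nominate for it is misdirected: Proposition~\ref{prop:sw} (southwestness is preserved under rectification) plays no role in the paper's proof of this theorem and does not by itself constrain two distinct highest weight diagrams from sharing a rectification. What the paper actually does is fix \emph{one} sequence $R=\Rect_{c_k}\circ\cdots\circ\Rect_{c_1}$ that simultaneously rectifies $T$, $U$, and $U'$, identify $R(U)=R(U')$ via Lemma~\ref{lem:hwt-rect}, and then descend using the commutation of this fixed $R$ with every $\Ke_r$ together with the injectivity of the raising operators --- the same cancellation pattern used in the proof of Lemma~\ref{lem:rectification}, where applying a common word of operators to both sides lets injectivity peel the composition. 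Your proposal contains no substitute for this descent.

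Your fallback route through Newman's lemma is worse than unfinished: the one-step diamond for adjacent indices is simply false in crystals. Already in $\B(\lambda)$ for $\mathfrak{gl}_3$ there are elements $b$ with $\e_1(b),\e_2(b)\neq 0$ but $\e_1\e_2(b)\neq\e_2\e_1(b)$; the join exists only via the longer Verma-type relation $\e_1\e_2^2\e_1(b)=\e_2\e_1^2\e_2(b)$, and since the operators $\Ke_i$ intertwine with the $\e_i$ under rectification, the same failure occurs for diagrams. So the adjacent-index case cannot be ``closed by a direct analysis of the pairing rule'' at length two; establishing local confluence with longer joins amounts to verifying Stembridge-type axioms for $\Ke_i$ on diagrams, a substantially larger undertaking than the theorem itself and not the paper's method. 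In short: existence is proved, the first half of uniqueness matches the paper, but the final step --- upgrading $\rect(U)=\rect(U')$ to $U=U'$ --- is a genuine gap in both of your proposed completions, and it is exactly the step the paper closes with its fixed common rectification sequence and operator injectivity.
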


\begin{proof}
  By Theorem~\ref{thm:closed}, for a southwest diagram $D$, the raising operators partition $\KD(D)$ into connected components. Suppose $U = \Ke_{i_m} \circ \cdots \circ \Ke_{i_1} (T)$ and $U' = \Ke_{j_n} \circ \cdots \circ \Ke_{j_1} (T)$ with $\Ke_r(U) = \Ke_r(U') = 0$ for all $r \geq 1$. By Lemma~\ref{lem:2.2}, we may apply some fixed sequence of rectification operators $R = \Rect_{c_k} \circ \cdots \Rect_{c_1}$ such that $R(T),R(U),R(U')$ are all rectified diagrams. In particular, by Lemma~\ref{lem:hwt-rect}, we have $R(U) = R(U')$. By Lemma~\ref{lem:rectification}, each of these rectification operators, and so, too, their composition $R$, commutes with $\Ke_r$ for all $r \geq 1$, and so by injectivity of raising operators, we must have $U=U'$ as desired.
\end{proof}

Next we show the rectified diagram of $T$ is is independent of the order in which the rectification operators are applied.

\begin{lemma}
  Let $T$ be a diagram. If $S = \Rect_{i_k} \circ \cdots \circ \Rect_{i_1} (T)$ and $S' = \Rect_{j_l} \circ \cdots \circ \Rect_{j_1} (T)$ are rectified diagrams for some sequences of column indices $i_1,\ldots,i_k\geq 1$ and $j_1,\ldots,j_l\geq 1$, then $S = S'$. 
  \label{lem:rectification}
\end{lemma}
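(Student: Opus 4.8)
The plan is to recognize Definition~\ref{def:KD-rect} as a terminating rewriting system on diagrams and to prove the result by establishing \emph{local confluence}, after which uniqueness of the normal form follows from Newman's Lemma (the Diamond Lemma). Call a step $T \mapsto \Rect_i(T)$ \emph{nontrivial} when $\Rect_i(T) \neq T$; a diagram is rectified precisely when no nontrivial step applies, so the hypotheses say that $S$ and $S'$ are two normal forms reached from $T$. Since $\Rect_i$ is a function (it moves the unique lowest unpaired cell of column $i+1$), the only source of nondeterminism is the choice of index $i$, so it suffices to reconcile the outcomes of distinct choices.

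First I would prove termination. Each nontrivial application of $\Rect_i$ moves a single cell from column $i+1$ to column $i$ within its row, so the nonnegative integer statistic $\sum_{(c,r)\in T} c$ strictly decreases by $1$. Hence every sequence of nontrivial steps is finite and normal forms exist; by Newman's Lemma it then suffices to show that whenever $\Rect_i(T)\neq T$ and $\Rect_j(T)\neq T$ with $i\neq j$, the diagrams $\Rect_i(T)$ and $\Rect_j(T)$ are \emph{joinable}, meaning some further sequences of rectification operators carry both to a common diagram. The case $|i-j|\geq 2$ is immediate: the operators $\Rect_i$ and $\Rect_j$ consult and alter only cells in the disjoint column pairs $\{i,i+1\}$ and $\{j,j+1\}$, so the column pairings of Definition~\ref{def:KD-cpair} used by one are untouched by the other. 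Thus $\Rect_j(\Rect_i(T)) = \Rect_i(\Rect_j(T))$, closing the diamond in a single step on each side.

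The substantive case is $|i-j|=1$, say $j=i+1$, where both operators involve the shared column $i+1$ among the three columns $i,i+1,i+2$. Here I would track the cell $a$ that $\Rect_i$ removes from column $i+1$ and the cell $b$ that $\Rect_{i+1}$ removes from column $i+2$, together with the monotonicity principle that drives the analysis: within a column pair, deleting a cell from the left column (or inserting one into the right column) can only enlarge the set of unpaired cells in the right column, so that the lowest unpaired cell moves weakly downward, while the reverse operations can only shrink that set. Removing $a$ from column $i+1$ is a deletion from the left column of the pair $\{i+1,i+2\}$, hence weakly lowers the cell that $\Rect_{i+1}$ subsequently moves; symmetrically, inserting $b$ into column $i+1$ is an insertion into the right column of the pair $\{i,i+1\}$, hence weakly lowers the cell that $\Rect_i$ subsequently moves. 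Organizing the argument by the positions of $a$ and $b$ relative to one another and to column $i+1$, exactly in the spirit of the case analysis in the proof of Theorem~\ref{thm:commute}, I would show $\Rect_i(T)$ and $\Rect_{i+1}(T)$ are joined after a short braid-like sequence of rectifications.

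This adjacent case is the main obstacle, being the only place where the two operators genuinely interfere; the non-adjacent and termination steps are routine. Combining termination with local confluence, Newman's Lemma yields a unique normal form reachable from $T$, and since both $S$ and $S'$ are such normal forms we conclude $S=S'$. As a byproduct, this justifies writing $\rect(T)$ and $\Rect_c^*$ without reference to the order of operations, as used in Lemma~\ref{lem:hwt-rect} and Theorem~\ref{thm:hwt-Kohnert}.
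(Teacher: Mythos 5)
Your overall strategy (termination plus local confluence plus Newman's Lemma) is coherent and genuinely different from the paper's argument, which never analyzes interactions between rectification operators directly: the paper instead raises $T$ to a highest weight diagram $U$, uses Theorem~\ref{thm:commute} to commute the two rectification sequences past the raising operators, observes that a rectified highest weight diagram must be a composition diagram of partition weight (via Theorem~\ref{thm:crystal-dem}), notes that rectification preserves rows so this composition diagram is determined by $\wt(T)$ alone, and then descends by injectivity of the raising operators. Your termination argument (the statistic $\sum_{(c,r)\in T} c$ drops by exactly $1$ per nontrivial step) and the commutation for $|i-j|\geq 2$ (disjoint column pairs) are both correct and routine, exactly as you say.

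The genuine gap is that the adjacent case $j=i+1$, which you correctly identify as the entire substance of the lemma, is asserted rather than proved, and the two ingredients you lean on are themselves unestablished. First, your ``monotonicity principle'' --- that deleting a cell from the left column of a pair only enlarges the set of unpaired cells in the right column, moving the lowest unpaired cell weakly down --- is plausible but not obvious for the pairing of Definition~\ref{def:KD-cpair}: same-row pairs take priority over the iterated upward pairing, and the iterative rule carries the side condition that all cells strictly between the candidates be already paired, so a deletion can trigger a cascade of re-pairings; this is not a standard bracket-matching invariance and needs its own proof. Second, the appeal to argue ``exactly in the spirit of the case analysis in the proof of Theorem~\ref{thm:commute}'' does not transfer: that proof reconciles a \emph{row} operator with a \emph{column} operator, whose moved cells can interact in essentially one position, whereas $\Rect_i$ and $\Rect_{i+1}$ share the entire column $i+1$, and after applying $\Rect_i$ the cell moved by $\Rect_{i+1}$ may no longer be $b$, so joinability could a priori require an unbounded sequence of further rectifications on each side. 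Nothing in the proposal bounds or controls this cascade, so the diamond is never actually closed; until the adjacent-case confluence is carried out in full (or replaced by the paper's crystal-theoretic detour), the proof is an outline rather than a proof. If completed, your route would have the virtue of being self-contained --- it would not depend on Theorem~\ref{thm:commute} or on the Demazure crystal structure of Theorem~\ref{thm:crystal-dem} --- but as written the key step is missing.
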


\begin{proof}
  Consider first the case when $\Ke_i(T)=0$ for all $i\geq 1$. By Lemma~\ref{lem:2.2} and the definition of the rectification operators, $\Rect_c(S) = S$ and $\Rect_c(S')=S'$ for all column indices $c \geq 1$. By Theorem~\ref{thm:commute}, we have $\Ke_i(S) = \Ke_i(S') = 0$ for all $i \geq 0$, ensuring $S,S$ are highest weight elements. Thus, by Theorem~\ref{thm:crystal-dem}, $S,S'$ are composition diagrams of partition weight. However, since rectification operators do not change the rows in which the cells lie, we must have $S=S'$.

  Now let $T$ be general. By acting on $T$ iteratively with any nonzero Kohnert raising operator, we arrive at some diagram $U = \Ke_{r_m} \circ \cdots \circ \Ke_{r_1} (T)$ satisfying $\Ke_i(U)=0$ for all $i\geq 1$. Let $R = \Rect_{i_k} \circ \cdots \circ \Rect_{i_1} (U)$ and $R' = \Rect_{j_l} \circ \cdots \circ \Rect_{j_1} (U)$. By Theorem~\ref{thm:commute}, we have $R = \Ke_{r_m} \circ \cdots \circ \Ke_{r_1} (S)$ and $R' = \Ke_{r_m} \circ \cdots \circ \Ke_{r_1} (S')$. By the previous case, since $U$ is a highest weight element, we have $R=R'$, and so $\Ke_{r_m} \circ \cdots \circ \Ke_{r_1} (S) = \Ke_{r_m} \circ \cdots \circ \Ke_{r_1} (S')$. Therefore, by the injectivity of the raising operators, we have $S=S'$ as desired.
\end{proof}

By Lemma~\ref{lem:rectification}, the following is well-defined for any diagram $T$.

\begin{definition}
  Given a diagram $T$, the \newword{rectification of $T$}, denoted by $\rect(T)$, is image under $\varphi$ (Definition~\ref{def:embed-SSYT}) of the unique rectified diagram obtained by applying any terminal sequence of rectification operators to $T$.
  \label{def:embed-KT}
\end{definition}

We use rectification to embed of each component of the crystal on $\KD(D)$ into a tableaux crystal.

\begin{theorem}
  Let $D$ be a southwest diagram and $\Koh\subseteq \KD(D)$ any connected component of the Kohnert crystal on $\KD(D)$. Then rectification is a well-defined, weight-preserving injective map
  \[ \rect:\Koh \rightarrow \B(\lambda) \]
  satisfying $\rect(\Ke_i(T)) = \e_i(\rect(T))$ for all $T\in\Koh$ and all $i \geq 1$, where $\lambda$ is the unique highest weight of $\Koh$.
  \label{thm:embed}
\end{theorem}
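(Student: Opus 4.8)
The plan is to assemble the statement from the commutation result of Theorem~\ref{thm:commute} together with the properties of the tableau map $\varphi$ of Definition~\ref{def:embed-SSYT}. First I would dispatch well-definedness: by Lemma~\ref{lem:rectification} every terminal sequence of rectification operators applied to $T$ produces the same rectified diagram $\bar{T}$, which by Definition~\ref{def:rectified} lies in $\KD(\D(\comp{a}))$ for some weak composition $\comp{a}$, so $\rect(T)=\varphi(\bar{T})$ is a well-defined element of $\SSYT_n(\lambda)$ with $\lambda=\mathrm{sort}(\comp{a})$. To pin down the target crystal, I would invoke Theorem~\ref{thm:hwt-Kohnert} to produce the unique highest weight element $U\in\Koh$ and Lemma~\ref{lem:hwt-rect} to identify its rectification with $\D(\lambda)$, where $\lambda=\wt(U)$ is a partition; then raising any $T\in\Koh$ to $U$ and transporting this raising path through rectification (via Theorem~\ref{thm:commute}) shows $\rect(T)$ is connected to $\rect(U)$ in $\B(\lambda)$, so the entire component lands in a single $\B(\lambda)$ whose highest weight is $\lambda$. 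Weight-preservation is immediate, since rectification operators move cells only horizontally and hence fix $\wt$, while $\varphi$ respects the weight map.

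For the intertwining $\rect(\Ke_i(T))=\e_i(\rect(T))$, the idea is to split it through rectification and through $\varphi$. Iterating Theorem~\ref{thm:commute} over the column operators in a terminal rectification sequence gives the identity $\overline{\Ke_i(T)}=\Ke_i(\bar{T})$ together with the equivalence $\Ke_i(T)\neq0\iff\Ke_i(\bar{T})\neq0$. It then remains to check that $\varphi$ carries the Kohnert raising operator on $\KD(\D(\comp{a}))$ to the tableau raising operator $\e_i$ on $\SSYT_n(\lambda)$; this is the composition-diagram analogue of the intertwining established for $\psi$ in the Proposition following Definition~\ref{def:KD-raise}, and it follows by matching the row $i$-pairing rule of Definition~\ref{def:KD-pair} with the pairing rule of Definition~\ref{def:SSYT-pair}, and the cell that moves in Definition~\ref{def:KD-raise} with the entry that changes in Definition~\ref{def:SSYT-lower}. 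Composing the two identities yields both the nonvanishing equivalence and the asserted equality.

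The delicate point is injectivity, which I would deduce from uniqueness of highest weights rather than from injectivity of $\varphi$ alone. Suppose $\rect(S)=\rect(T)$ for $S,T\in\Koh$. Using Theorem~\ref{thm:hwt-Kohnert}, pick a raising path $U=\Ke_{r_m}\circ\cdots\circ\Ke_{r_1}(S)$ to the highest weight of $\Koh$. Transporting this path through $\rect$ by the intertwining of the previous paragraph yields a nonzero raising path $\e_{r_m}\circ\cdots\circ\e_{r_1}$ emanating from the common image $\rect(S)=\rect(T)$; applying the nonvanishing equivalence in reverse, the same sequence $\Ke_{r_m}\circ\cdots\circ\Ke_{r_1}$ acts nontrivially on $T$ and produces $U''\in\Koh$ with $\rect(U'')=\rect(U)$. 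Since $\rect(U)$ is a highest weight element of $\B(\lambda)$, the equivalence forces $\Ke_r(U'')=0$ for all $r$, so $U''$ is a highest weight element of $\Koh$; by uniqueness in Theorem~\ref{thm:hwt-Kohnert}, $U''=U$. Hence $S$ and $T$ are carried to the same element $U$ by a common sequence of raising operators, and repeated application of Proposition~\ref{prop:inverse}, invertibility of the raising operators through the lowering operators, gives $S=T$.

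I expect the main obstacle to be the careful bookkeeping in the intertwining step, specifically confirming that $\varphi$ really does transport $\Ke_i$ to $\e_i$ under the relabeling of rows by entries, since this is the one place where the comparison of the Kohnert and tableau pairing conventions must be made explicit. Once that correspondence is in hand, the remaining assertions follow formally from Theorems~\ref{thm:commute} and \ref{thm:hwt-Kohnert}, Lemmas~\ref{lem:hwt-rect} and \ref{lem:rectification}, and Proposition~\ref{prop:inverse}.
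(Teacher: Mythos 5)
Your proposal is correct and takes essentially the same route as the paper: well-definedness via Lemma~\ref{lem:rectification}, weight preservation because rectification moves cells only within their rows, and the intertwining by iterating Theorem~\ref{thm:commute}, with Theorem~\ref{thm:closed} guaranteeing the operators stay inside $\KD(D)$. The paper's own proof consists of exactly those citations and nothing more, so your additional work --- pinning down $\lambda$ through Theorem~\ref{thm:hwt-Kohnert} and Lemma~\ref{lem:hwt-rect}, making the $\varphi$-intertwining comparison of pairing conventions explicit, and especially the injectivity argument transporting a raising path to the unique highest weight and inverting it via Proposition~\ref{prop:inverse} --- supplies precisely the details the paper leaves implicit, and your injectivity mechanism is the same one the paper later spells out in proving Theorem~\ref{thm:main}.
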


\begin{proof}
  Rectification is well-defined by Lemma~\ref{lem:rectification}, and is weight preserving since it does not change the rows for any cell. Commutation with the Kohnert crystal operators follows from Theorem~\ref{thm:commute}, and by Theorem~\ref{thm:closed}, these operators stay within the set of Kohnert diagrams for $D$. 
\end{proof}


%
\section{Kohnert tableaux}
%
\label{sec:label}

We define labelings of diagrams to determine when a diagram $T$ lies in $\KD(D)$ for a southwest diagram $D$ without constructing the Kohnert poset on $\KD(D)$.

\subsection{Diagram labelings}
\label{sec:label-diagram}

By Theorem~\ref{thm:embed}, for $D$ southwest, each connected component of the Kohnert crystal on $\KD(D)$ embeds as a subset of a highest weight crystal $\B(\lambda)$ with highest weight $\lambda$ the unique highest weight on the component. However, the Demazure crystals $\B_w(\lambda)$ are special subsets of the full crystals $\B(\lambda)$, so it remains to show each component embeds as $\B_w(\lambda)$ for some permutation $w$.

To identify the permutation $w$, or, equivalently, the weak composition $w \cdot \lambda$, we recall the canonical labelings of rectified diagrams defined by Assaf and Searles \cite[Definition~2.3]{AS18} that can be used to identify those weak compositions $\comp{a}$ for which a rectified diagram $T$ belongs to $\KD(\D(\comp{a}))$.

\begin{definition}[\cite{AS18}]
  For a weak composition $\comp{a}$ of length $n$, a \newword{Kohnert tableau of content $\comp{a}$} is a diagram labeled with $1^{a_1}, 2^{a_2}, \ldots, n^{a_n}$ satisfying
  \begin{enumerate}[label=(\roman*)]
  \item \label{i:label} there is exactly one $i$ in each column from $1$ through $a_i$;
  \item \label{i:flag} each entry in row $i$ is at least $i$;
  \item \label{i:descend} the cells with entry $i$ weakly descend from left to right;
  \item \label{i:invert} if $i<j$ appear in a column with $i$ above $j$, then there is an $i$ in the column immediately to the right of and strictly above $j$.
  \end{enumerate}
  \label{def:kohnert-tableaux}
\end{definition}

\begin{figure}[ht]
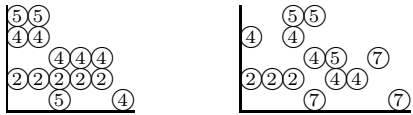

  \begin{displaymath}
    \cirtab{%
      5 & 5 \\
      4 & 4 \\
      & & 4 & 4 & 4 & \\
      2 & 2 & 2 & 2 & 2 \\
      & & 5 & & & 4 \\\hline } 
    \hspace{5\cellsize}
    \cirtab{%
      & & 5 & 5 \\
      4 & & 4 \\
      & & & 4 & 5 & & 7 \\
      2 & 2 & 2 & & 4 & 4 \\
      & & & 7 & & & & 7 \\\hline } 
  \end{displaymath}
  \caption{\label{fig:labeling}A Kohnert tableau of content $(0,5,0,6,4)$ (left) and a more general Kohnert labeling of a diagram (right).}
\end{figure}

For example, the rectified diagram on the left of Fig.~\ref{fig:labeling} is labeled in such a way that it is a Kohnert tableau. Assaf and Searles \cite[Theorem~2.8]{AS18} show for $T$ a rectified diagram, $T \in \KD(\D(\comp{a}))$ if and only if cells of $T$ can be labeled so that Definition~\ref{def:kohnert-tableaux} holds. Thus the left diagram in Fig.~\ref{fig:labeling} is a Kohnert diagram for $(0,5,0,6,4)$. We aim to create an analogous construction for $\KD(D)$ for any southwest diagram $D$, with an example shown on the right side of Fig.~\ref{fig:labeling}.

To begin, we generalize Definition~\ref{def:kohnert-tableaux}\ref{i:label} to arbitrary labelings of diagrams.

\begin{definition}
  A \newword{labeling} of a diagram $T$ is a map $\Label$ from cells of $T$ to positive integers. A labeling $\Label$ is \newword{strict} if cells in the same column have distinct labels.
  \label{def:label}
\end{definition}

By Definition~\ref{def:label}, we may regard the labels used in a strict labeling $\Label$ as a diagram $\D(\Label)$ defined by setting the cell $(c,r)$ in $\D(\Label)$ if and only if column $c$ has a cell labeled $r$. In this case, we say \newword{$T$ is labeled with respect to $\D(\Label)$}. One labeling that will be important is the \newword{super-standard labeling} of a diagram that places label $r$ in each cell of row $r$.

As we wish for the labels on the cells of $T$ to indicate from whence they came in $D$, we introduce the following condition based on Definition~\ref{def:kohnert-tableaux}\ref{i:flag}.

\begin{definition}
  A labeling $\Label$ of a diagram $T$ is \newword{flagged} if for every row $r$ and for every cell $x\in T$ lying in row $r$ we have $\Label(x) \geq r$.
  \label{def:flagged}
\end{definition}

To generalize Definition~\ref{def:kohnert-tableaux}\ref{i:descend}, we define a re-labeling procedure for diagrams as they are rectified. To do this, we give an alternative to the pairing rule used in rectification so as to take labels into account.

\begin{definition}
  Let $T$ be a diagram with labeling $\Label$. Given a column index $c$, the \newword{label $c$-pairing of $T$ with respect to $\Label$} is defined on cells of column $c+1$ as follows: assuming all cells above $x$ in column $c+1$ have been label paired, label pair $x$ with the cell weakly above it in column $c$ with the largest label that is weakly smaller than $\Label(x)$, if it exists, and otherwise leave $x$ unpaired.
  \label{def:pair-label}
\end{definition}

We use the label pairing rule to redefine labels as we rectify $T$, thereby changing the labeling diagram $D$ in the process as well.

\begin{definition}
  Let $T$ be a diagram with labeling $\Label$. Given a column $c$, the \newword{rectified labeling} $\Rect_c^*(\Label)$ of $\Rect_c^*(T)$ is constructed recursively as follows. Let $x_1,\ldots,x_m$ be the cells in column $c+1$ of $T$ that are not label $c$-paired, taken from highest to lowest. Initially set $\Label' = \Label$. Then
  \begin{itemize}
  \item for $i$ from $1$ to $m$, if there exist label $c$-paired cells $y,z$ with $z$ above $x_i$ such that $\Label(y) \leq \Label'(x_i) < \Label'(z)$, then choose $z$ so that $\Label'(z)$ is maximal and swap the labels of $x_i$ and $z$;
  \item for every cell $z$ in column $c+1$, if $z$ is label $c$-paired with $y$ in column $c$, then set $\Label'(z) = \Label(y)$.
  \end{itemize}
  Then $\Rect_c^*(\Label)$ is the labeling of $\Rect_c^*(T)$ obtained by maintaining labels from $\Label'$ as cells move left under rectification.
  \label{def:re-label}
\end{definition}

\begin{figure}[ht]
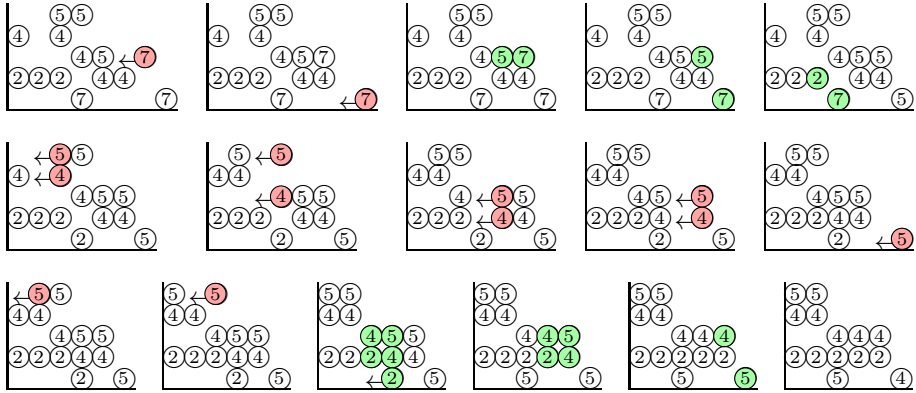

  \begin{displaymath}
    \begin{array}{l}
      \cirtab{%
        & & 5 & 5 \\
        4 & & 4 \\
        & & & 4 & 5 & & \leftball{red}{7} \\
        2 & 2 & 2 & & 4 & 4 \\
        & & & 7 & & & & 7 \\\hline } \hspace{1.4\cellsize}
      \cirtab{%
        & & 5 & 5 \\
        4 & & 4 \\
        & & & 4 & 5 & 7 \\
        2 & 2 & 2 & & 4 & 4 \\
        & & & 7 & & & & \leftball{red}{7} \\\hline } \hspace{1.4\cellsize}
      \cirtab{%
        & & 5 & 5 \\
        4 & & 4 \\
        & & & 4 & \cball{green}{5} & \cball{green}{7} \\
        2 & 2 & 2 & & 4 & 4 \\
        & & & 7 & & & 7 \\\hline } \hspace{1.4\cellsize}
      \cirtab{%
        & & 5 & 5 \\
        4 & & 4 \\
        & & & 4 & 5 & \cball{green}{5} \\
        2 & 2 & 2 & & 4 & 4 \\
        & & & 7 & & & \cball{green}{7} \\\hline } \hspace{1.4\cellsize}
      \cirtab{%
        & & 5 & 5 \\
        4 & & 4 \\
        & & & 4 & 5 & 5 \\
        2 & 2 & \cball{green}{2} & & 4 & 4 \\
        & & & \cball{green}{7} & & & 5 \\\hline } \\ \\
      \cirtab{%
        & & \leftball{red}{5} & 5 \\
        4 & & \leftball{red}{4} \\
        & & & 4 & 5 & 5 \\
        2 & 2 & 2 & & 4 & 4 \\
        & & & 2 & & & 5 \\\hline } \hspace{2.4\cellsize}
      \cirtab{%
        & 5 & & \leftball{red}{5} \\
        4 & 4 & \\
        & & & \leftball{red}{4} & 5 & 5 \\
        2 & 2 & 2 & & 4 & 4 \\
        & & & 2 & & & 5 \\\hline } \hspace{2.4\cellsize}
      \cirtab{%
        & 5 & 5 \\
        4 & 4 & \\
        & & 4 & & \leftball{red}{5} & 5 \\
        2 & 2 & 2 & & \leftball{red}{4} & 4 \\
        & & & 2 & & & 5 \\\hline } \hspace{1.4\cellsize}
      \cirtab{%
        & 5 & 5 \\
        4 & 4 & \\
        & & 4 & 5 & & \leftball{red}{5} \\
        2 & 2 & 2 & 4 & & \leftball{red}{4} \\
        & & & 2 & & & 5 \\\hline } \hspace{1.4\cellsize}
      \cirtab{%
        & 5 & 5 \\
        4 & 4 & \\
        & & 4 & 5 & 5  \\
        2 & 2 & 2 & 4 & 4  \\
        & & & 2 & & & \leftball{red}{5} \\\hline } \\ \\
      \cirtab{%
        & \leftball{red}{5} & 5 \\
        4 & 4 & \\
        & & 4 & 5 & 5  \\
        2 & 2 & 2 & 4 & 4  \\
        & & & 2 & & 5 \\\hline } \hspace{1.3\cellsize}
      \cirtab{%
        5 & & \leftball{red}{5} \\
        4 & 4 & \\
        & & 4 & 5 & 5  \\
        2 & 2 & 2 & 4 & 4  \\
        & & & 2 & & 5 \\\hline } \hspace{1.3\cellsize}
      \cirtab{%
        5 & 5 \\
        4 & 4 & \\
        & & \cball{green}{4} & \cball{green}{5} & 5  \\
        2 & 2 & \cball{green}{2} & \cball{green}{4} & 4  \\
        & & & \leftball{green}{2} & & 5 \\\hline } \hspace{1.3\cellsize}
      \cirtab{%
        5 & 5 \\
        4 & 4 & \\
        & & 4 & \cball{green}{4} & \cball{green}{5}  \\
        2 & 2 & 2 & \cball{green}{2} & \cball{green}{4}  \\
        & & 5 & & & 5 \\\hline } \hspace{1.3\cellsize}
      \cirtab{%
        5 & 5 \\
        4 & 4 & \\
        & & 4 & 4 & \cball{green}{4}  \\
        2 & 2 & 2 & 2 & 2  \\
        & & 5 & & & \cball{green}{5} \\\hline } \hspace{1.3\cellsize}
      \cirtab{%
        5 & 5 \\
        4 & 4 & \\
        & & 4 & 4 & 4  \\
        2 & 2 & 2 & 2 & 2  \\
        & & 5 & & & 4 \\\hline } 
    \end{array}
  \end{displaymath}
  \caption{\label{fig:re-labeling}The rectified relabeling of a diagram, with cells that get re-labeled highlighted (along with their label pairs) and cells that move under rectification indicated with an arrow.}
\end{figure}

For example, Fig.~\ref{fig:re-labeling} shows the steps by which the right diagram in Fig.~\ref{fig:labeling} relabels to the left, giving a refinement of rectification shown in Fig.~\ref{fig:rectification}.

If $T$ has the same column weight as $D$, then we can label $T$ with respect to $D$ and rectify the labeling. Since the order of rectification does not affect the result, we henceforth rectify from right to left. With this in mind, we have the following generalization of Definition~\ref{def:kohnert-tableaux}\ref{i:descend}.

\begin{definition}
  A labeling $\Label$ of a diagram $T$ is \newword{semi-proper} if the cells with entry $i$ in the fully rectified labeling $\rect(\Label)$ of the rectified diagram $\rect(T)$ weakly descend from left to right.
  \label{def:semiproper}
\end{definition}

To generalize Definition~\ref{def:kohnert-tableaux}\ref{i:invert}, we generalize the \emph{labeling algorithm} defined by Assaf and Searles for rectified diagrams \cite[Definition~2.5]{AS18} that greedily assigns labels so that Definition~\ref{def:kohnert-tableaux}\ref{i:descend} holds.

\begin{definition}[\cite{AS18}]
  Given a weak composition $\comp{a}$ and a diagram $T\in\KD(\D(\comp{a}))$, the \newword{Kohnert labeling of $T$ with respect to $\comp{a}$}, denoted by $\Label_{\comp{a}}(T)$, assigns labels to cells of $T$ as follows. Assuming all columns right of column $c$ have been labeled, assign labels $\{i \mid a_i \geq c\}$ to cells of column $c$ from bottom to top by choosing the smallest label $i$ such that the $i$ in column $c+1$, if it exists, is weakly lower.
\label{def:labelling_algorithm}
\end{definition}

Assaf and Searles \cite[Theorem~2.8]{AS18} prove for any diagram $T$ and weak composition $\comp{a}$ for which $T$ and $\D(\comp{a})$ have the same column weight, the Kohnert labeling of $T$ with respect to $\comp{a}$ is well-defined and flagged for $T$ if and only if $T\in\KD(\D(\comp{a}))$. This provides the bijection between Kohnert tableaux and Kohnert diagrams for $\comp{a}$. In this case, for any cell $x$ in column $c+1$, there exists a cell $y$ in column $c$ with $\Label_{\comp{a}}(y) = \Label_{\comp{a}}(x)$ and $y$ lies weakly above $x$. Thus $x$ will label $c$-pair with $y$, ensuring $\Label_{\comp{a}}(T)$ is invariant under re-labeling. 

If $T$ has the same column weight as $D$, then we can label $T$ with respect to $D$ by a greedy algorithm generalizing Definition~\ref{def:labelling_algorithm} by forcing the labeling to be semi-proper. For this, given a diagram $T$ and a column $c$, we partition $T$ into $T_{\leq c} \sqcup T_{>c}$, where the former contains all cells in columns weakly left of $c$, and the latter contains all cells in columns strictly right of $c$. When we rectify $T_{>c}$, we still regard all cells as being weakly right of column $c$.

\begin{definition}
  For diagrams $T,D$ of the same column weight, construct the \newword{Kohnert labeling of $T$ with respect to $D$}, denoted by $\Label_{D}(T)$, as follows. Once all columns of $T$ right of column $c$ have been labeled, set $\displaystyle T' = T_{\leq c} \ \sqcup \ \rect(T_{>c})$, where the leftmost occupied column of the latter is $c+1$. Bijectively assign labels
  \[\{r \mid D \text{ has a cell in column $c$, row $r$ } \}\]
  to cells in column $c$ of $T$ from smallest to largest by assigning label $r$ to the lowest unlabeled cell $x$ such that if there exists a cell $z$ in column $c+1$ of $\rect(T_{>c})$ with label $r$, then $x$ lies weakly above $z$.
  \label{def:labeling}
\end{definition}

We generalize Definition~\ref{def:kohnert-tableaux} to a notion of flagged, proper labelings.

\begin{definition}
  A labeling $\Label$ of a diagram $T$ is \newword{proper} if $\Label = \Label_D$ for some $D$.
  \label{def:proper}
\end{definition}

Since Fig.~\ref{fig:re-labeling} rectified from right to left, we see this is a proper labeling.

Notice a proper labeling is, in particular, strict. Henceforth we shall rely on the southwest property of diagrams, and so we say a proper labeling $\Label_D$ is southwest if and only if the diagram $D$ is southwest.

\subsection{Kohnert labelings}
\label{sec:label-kohnert}

The existence of a strict, semi-proper, southwest labeling for a diagram implies the existence of a proper labeling.

\begin{lemma}
  Let $\Label$ be a semi-proper labeling of $T$ with respect to a southwest diagram $D$. Then $\Label_D$ is well-defined on $T$, and if $\Label$ is flagged, then so is $\Label_D$.
  \label{lem:semi-proper}
\end{lemma}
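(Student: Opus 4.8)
The plan is to run the greedy assignment of Definition~\ref{def:labeling} column by column, from right to left, and to use the hypothesized semi-proper labeling $\Label$ as a certificate that this process never fails and that its output respects the flag. First I would record the free consequences of the hypothesis: since $\Label$ is a labeling \emph{with respect to} $D$, it is strict (Definition~\ref{def:label}) with $\D(\Label)=D$, so in each column $c$ the labels appearing under $\Label$ are exactly the set $R_c=\{r \mid (c,r)\in D\}$, each occurring once. In particular $T$ and $D$ share the same column weight and, for each $c$, column $c$ of $T$ has exactly $|R_c|$ cells; so the only way Definition~\ref{def:labeling} can fail is for the greedy rule to leave some label unplaced or to place labels non-bijectively. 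The proof is therefore an induction on columns $c$ from right to left, with inductive hypothesis that the portion of $\Label_D$ already built on columns $>c$ is well-defined and flagged, and that the rectified labeling of $T_{>c}$ (obtained as in Definition~\ref{def:re-label}) is itself flagged, so a cell carrying label $r$ in $\rect(T_{>c})$ lies weakly below row $r$.

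For well-definedness at column $c$, I would observe that the greedy rule seeks, for each $r\in R_c$ in increasing order, the lowest unlabeled cell of column $c$ lying weakly above the cell $z_r$ (if any) labeled $r$ in column $c+1$ of $\rect(T_{>c})$. This is precisely a bipartite matching between the labels $R_c$ and the cells of column $c$, where label $r$ may use only cells in rows $\geq\mathrm{row}(z_r)$, an upward-closed set of rows. The certificate $\Label$, after the re-labeling induced by rectifying $T_{>c}$, supplies one feasible matching: semi-properness (Definition~\ref{def:semiproper}) says that in $\rect(\Label)$ the cells of each fixed value weakly descend, which is exactly the statement that these lower-bound constraints admit a simultaneous solution. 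Because the greedy rule assigns the smallest label to the lowest feasible cell and the feasible sets are rays, an exchange argument shows it produces a feasible matching whenever one exists; hence it never gets stuck, and the two sides having equal cardinality forces a bijection. Reducing to the fully rectified picture via $\rect(\Label)$ also lets me invoke the Assaf--Searles characterization \cite{AS18} of when a rectified diagram admits a flagged descending labeling to anchor the base of the induction.

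For the flagged conclusion I would exploit that the greedy rule places \emph{small} labels on \emph{low} cells, which is aligned with, not opposed to, the flag condition $\Label_D(x)\geq\mathrm{row}(x)$ of Definition~\ref{def:flagged}. The hypothesis that $\Label$ is flagged, together with the inductive hypothesis that each $z_r$ lies in a row $\leq r$, means the certificate (after re-labeling) is a flagged feasible matching: each label $r$ occupies a cell in some row in the interval $[\mathrm{row}(z_r),r]$. Thus Hall's condition for placing the labels into rows between their lower bounds and their flags holds. A minimal-counterexample argument then shows the greedy output is itself flagged: were $r$ the smallest label that greedy places above row $r$, the feasibility witnessed by the certificate would let one exchange it downward, contradicting greedy's choice of the lowest feasible cell. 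Propagating flaggedness through the rectification re-labeling of column $c$ restores the inductive hypothesis for the next column. The southwest hypothesis on $D$ enters exactly here and at the rectification step: by Proposition~\ref{prop:sw} it keeps the re-labeled intermediate diagrams southwest and controls the relative positions of cells in $\rect(T_{>c})$, which is what guarantees the constraint cells $z_r$ are themselves flagged and mutually consistent.

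The step I expect to be the main obstacle is the bookkeeping around Definition~\ref{def:re-label}: I must show that the labels greedy produces on columns $>c$ agree, as scaffolding for column $c$, with the rectified labeling $\rect(\Label)$ of the certificate closely enough that the feasibility and flag-compatibility inequalities transfer cleanly, even though rectification both moves cells left and swaps labels. Verifying that semi-properness of $\Label$ survives each rectification step as the weakly-descending property needed at the next column, and that the southwest property simultaneously forces the flaggedness of the intermediate cells $z_r$, is where the genuine work lies; the matching and exchange arguments above are routine once that correspondence is pinned down.
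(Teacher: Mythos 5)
Your overall frame (a right-to-left induction on columns, using $\Label$ as a certificate that the greedy rule of Definition~\ref{def:labeling} never gets stuck, with a Hall-type exchange argument for well-definedness and an earliest-deadline-style exchange for the flag) is reasonable, and those two matching arguments are indeed routine for upward-closed feasible sets. But the step you defer as ``the main obstacle'' is not bookkeeping --- it is the entire content of the lemma, and your inductive setup cannot close without it. Two concrete problems. First, your inductive hypothesis records only that $\Label_D$ is well-defined and flagged on columns $>c$; it asserts no relation between the greedy labels already placed and the certificate $\Label$. The constraint cells $z_r$ in Definition~\ref{def:labeling} are computed from the rectification of the \emph{greedy} labels on $T_{>c}$, whereas semi-properness of $\Label$ (Definition~\ref{def:semiproper}) constrains only the \emph{fully} rectified labeling $\rect(\Label)$ of $\rect(T)$ --- a different object, since full rectification also moves and relabels cells in columns $\leq c$ and the relabeling of Definition~\ref{def:re-label} can swap labels within a column. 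So your claim that semi-properness ``is exactly the statement that these lower-bound constraints admit a simultaneous solution'' is unjustified: the certificate's label positions in column $c$ of $T$ need not dominate the $z_r$'s at all. Second, even granting feasibility at column $c$, the greedy rule may fill column $c$ differently from $\Label$, and then nothing in your argument supplies a semi-proper witness for the columns \emph{to the left}; local feasibility column by column does not compose into well-definedness of $\Label_D$ on all of $T$.

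The paper's proof resolves exactly these two points by maintaining a stronger invariant: a semi-proper and flagged labeling $\Label'$ that \emph{agrees with $\Label_D$ on all columns strictly right of $c$}, obtained by surgery on the certificate rather than by a matching argument. Within column $c$ it repeatedly swaps a small label $i$ down into the position the greedy rule would choose, displacing a larger label $j$ upward; semi-properness to the right is preserved by the choice of position, the flag is preserved because $j>i\geq$ the vacated row, and --- this is where the southwest hypothesis genuinely enters --- semi-properness to the \emph{left} of column $c$ is restored by permuting the rows of the labels $i,j$ in the columns left of $c$, which is possible because southwestness forces at least as many columns left of $c$ to contain label $i$ as contain label $j$ whenever $i<j$ occur together in column $c$. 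Your appeal to Proposition~\ref{prop:sw} (rectification preserves southwestness) does not substitute for this counting-and-repair argument, so as written the proposal has a genuine gap at its load-bearing step.
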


\begin{proof}
  Suppose $\Label_D$ is well-defined on and agrees with $\Label$ on all columns strictly to the right of column $c$. If $c=0$, then the result is trivial. Thus we may proceed by induction on $c$. Suppose $\Label_D$ is well-defined on and agrees with $\Label$ on all columns strictly to the right of column $c$, and consider column $c$ itself. Define a new labeling $\Label'$ on $T$ as follows. Initially set $\Label' = \Label$, which we note also coincides with $\Label_D$ for columns strictly to the right of column $c$. Beginning with the smallest label, for each label $i$, let $s$ denote the row of $i$ in column $c$ of $\Label'$ and let $r \leq s$ denote the lowest row with label in $\Label'$ larger than $i$ for which placing $i$ into row $r$ remains semi-proper. If $r<s$, then swap labels $i$ and $j$ in column $c$ of $\Label'$. This will still be semi-proper with columns to the right since $i$ has chosen its cell to maintain that property and since $j$ has moved up. Since $D$ is southwest, as we rectify, there must be at least as many columns left of $c$ with label $i$ as have label $j$ since $i<j$ occurs in column $c$. Thus we may permute the rows of these labels in those columns as well, thereby maintaining the semi-proper property to the left of column $c$. Once we complete this process with each label in column $c$, $\Label'$ will be semi-proper and coincide with $\Label_D$ for columns strictly to the right of column $c-1$. Thus, by induction, $\Label_D$ is well-defined for $T$. When a label $j$ moves up, it is displaced by a smaller label $i$. Thus if $\Label$ is flagged, then $i$ is at least as great as its row in $\Label$, and so $j>i$ will be at least as great as its row in $\Label'$. Hence $\Label'$ is also flagged, and so $\Label_D$ is as well.
\end{proof}

Our first result justifying the slew of definitions for labelings is the following.

\begin{theorem}
  For $D$ a southwest diagram, if $T\in\KD(D)$, then $\Label_{D}$ is well-defined and flagged for $T$.
  \label{thm:label-necessary}
\end{theorem}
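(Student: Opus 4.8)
The plan is to invoke Lemma~\ref{lem:semi-proper}: since $D$ is southwest, it suffices to exhibit a single \emph{flagged}, \emph{semi-proper} labeling of $T$ with respect to $D$, for then the lemma upgrades this to $\Label_D$ being well-defined and flagged. Thus the whole problem reduces to producing one good labeling of $T$ and checking that it rectifies with weakly descending entries in each label.

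The natural candidate is the labeling inherited from $D$ along Kohnert moves. I would start from the super-standard labeling of $D$, which places $r$ in every cell of row $r$, fix a sequence of Kohnert moves realizing $T$ from $D$, and let each slid cell carry its label unchanged as it moves down within its column. Call the resulting labeling $\Label$ of $T$. Three properties are immediate and independent of the chosen sequence. First, $\Label$ is flagged: a cell currently in row $r'$ began in some row $r \geq r'$, since cells only ever move down, so its label $r$ satisfies $r \geq r'$. Second, $\Label$ is strict, because the cells of a single column of $D$ occupy distinct rows and hence carry distinct labels, while Kohnert moves never change the column of a cell nor merge cells. Third, for the same reason the labels appearing in column $c$ of $\Label$ are exactly the rows occupied by $D$ in column $c$, so $\D(\Label) = D$; that is, $\Label$ is a labeling with respect to $D$.

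It remains to prove the main point, that $\Label$ is semi-proper, i.e.\ that in the fully rectified labeling $\rect(\Label)$ the cells carrying any fixed label weakly descend from left to right. The base case $T = D$ is favorable: by Proposition~\ref{prop:sw} the rectified diagram $\rect(D)$ is the composition diagram $\D(\wt(D))$, and on the super-standard labeling the label $c$-pairing of Definition~\ref{def:pair-label} pairs a cell only with a cell of the adjacent column in the \emph{same} row, so the re-labeling of Definition~\ref{def:re-label} acts trivially at every step. Hence $\rect(\Label)$ is again super-standard, each label $i$ occupying the single row $i$ of $\D(\wt(D))$, which is trivially weakly descending. For general $T$ I would argue by induction on the number of Kohnert moves used to reach $T$, showing that a single downward move of a cell carrying its label sends a flagged semi-proper labeling to a flagged semi-proper labeling.

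The inductive step is where the real difficulty lies, precisely because rectification does not commute with Kohnert moves: sliding one cell down in a column $c_0$ can change which cells are unpaired under the column pairings and thereby reroute the rectification and the attendant re-labeling in other columns. The plan is to track this interaction through a case analysis on the position of the moved cell relative to the rows and columns active at each rectification step, in the same spirit as the case analysis proving Theorem~\ref{thm:commute}, using the southwest hypothesis on $D$ to constrain the available pairings, and to verify in every case that the weakly descending order of each label in $\rect(\Label)$ survives. An alternative and possibly cleaner route would bypass Kohnert moves entirely: connect $T$ to the unique highest weight $U$ of its component via raising operators (Theorem~\ref{thm:hwt-Kohnert}), note that $U$ rectifies to a composition diagram of partition weight (Lemma~\ref{lem:hwt-rect}) so that the super-standard labeling of $U$ is semi-proper by the base case, and then propagate semi-properness down the component using the commutation of rectification with the raising operators (Theorem~\ref{thm:commute}) together with the compatibility of re-labeling with those operators. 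Either way, the crux is controlling the rectified labels, and that is the step I expect to occupy the bulk of the argument.
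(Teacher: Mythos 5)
The reduction via Lemma~\ref{lem:semi-proper} and the induction on the number of Kohnert moves is exactly the paper's strategy, but your proposal stops where the proof begins: the inductive step is deferred to an unspecified case analysis, and the invariant you choose --- carry each label unchanged with its cell from the super-standard labeling of $D$ --- is not the paper's and is never verified to be semi-proper. The paper's induction re-canonicalizes at each step: it takes the proper labeling $\Label_D(S)$ of the previous diagram $S$ (available by the inductive hypothesis together with Lemma~\ref{lem:semi-proper}) and transports it to $T$ \emph{by column order}; crucially, when the moved cell $x$ jumps over a cell $y$, the label does \emph{not} always stay with $x$: in one subcase the paper regards $x$ and $y$ as each moving down one row, i.e.\ it swaps their labels, and the choice between carrying and swapping is dictated by whether a cell $z$ with $\Label_D(z)=\Label_D(y)$ sits in row $r-1$ of $\rect(S_{>c})$. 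The inequality $\Label_D(y_h)<\Label_D(x)$ that drives all the jump cases is proved by applying the Kohnert tableau condition of Definition~\ref{def:kohnert-tableaux}\ref{i:invert} to the rectified canonical labeling, via Theorem~\ref{thm:stable} --- a property available for $\Label_D(S)$ but not for a merely carried labeling, which after a jump can place a smaller label above a larger one with no supporting cell to the right, violating \ref{i:invert}. So the per-move claim you announce --- that one Kohnert move sends your carried flagged semi-proper labeling to a flagged semi-proper labeling --- is precisely the content of the theorem and is not established; the paper's explicit label swaps indicate that the fixed rule ``labels ride with cells'' is the wrong invariant in the jump cases.

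Your alternative route is circular at its starting point. To propagate semi-properness from the highest weight $U$ of the component, you must first know that $\Label_D$ is well-defined and flagged on $U$; but $U\in\KD(D)$ is in general neither $D$ nor a composition diagram, so this is an instance of the very theorem being proved. The ``base case'' you invoke concerns the super-standard labeling of $U$, which is a labeling with respect to $\D(\wt(U))$, not with respect to $D$, so Lemma~\ref{lem:semi-proper} applied to it yields well-definedness of $\Label_{\D(\wt(U))}$ on $U$ and says nothing about $\Label_D$. Finally, the compatibility of the labeling with the raising operators that you would need is Lemma~\ref{lem:label-crystal}, itself a later and substantial case analysis resting on Theorem~\ref{thm:stable}; assuming it here front-loads the difficulty rather than avoiding it.
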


\begin{proof}
  For $D$ itself, the labeling algorithm places label $r$ in each cell of row $r$, so $\Label_{D}(D)$ is well-defined and flagged. Thus we may proceed by induction on the minimum number of Kohnert moves needed to obtain a diagram from $D$. Suppose $T$ is obtainable from some diagram $S\in\KD(D)$ by a single Kohnert move, say by moving down the cell $x$ in column $c$, row $r$, where the theorem holds for $S$. Let $\Label$ be the labeling on $T$ defined for each column by taking the labels in the same column order as for $\Label_D$ on $S$. Notice $\Label$ is semi-proper on $T$ for all columns strictly to the right of column $c$, and $\Label$ is flagged on $T$.

  If $x$ moves down one row, then we claim $\Label$ is semi-proper. This holds for column $c$ since $x$ has moved down and there is no cell to the right in the row from whence it came (since moving $x$ down is a Kohnert move), and it holds for columns strictly to the left of column $c$ since $x$ has moved down in column $c$ and no other labels have moved. Therefore, by Lemma~\ref{lem:semi-proper}, $\Label_D$ is well-defined and flagged on $T$.

  Thus we may assume both $S$ and $T$ have a cell $y$ in column $c$, row $r-1$ over which $x$ jumps. Suppose first there is no cell in column $c$ row $r-2$. If $\Label_D(y) > \Label_D(x)$ in $S$, then since $x$ is above $y$, by Definition~\ref{def:kohnert-tableaux}\ref{i:invert} in $\rect(S_{>c})$ there must be a cell $z$ with $\Label_D(z)=\Label_D(x)$ in column $c+1$ weakly below $x$ and strictly above $y$, and hence in row $r$, contradicting that going from $S$ to $T$ is a Kohnert move. Therefore $\Label_D(y) < \Label_D(x)$. Let $z$ be the cell of $\rect(S_{>c})$ in column $c+1$ with $\Label_D(z) = \Label_D(y)$, if it exists. If $z$ lies in row $r-1$, then in $T$, we regard $x$ as jumping over $y$ so that, in $\Label$, the labels remain the same, with $x$ now below $y$ in row $r-2$; otherwise, in $T$, we regard both $x$ and $y$ as moving down one row so that, in $\Label$, the labels remain the same with $x$ in row $r-1$ and $y$ in row $r-2$. Then $\Label$ semi-proper and flagged in column $c$, and so too to the left since labels have moved only downward. Again, by Lemma~\ref{lem:semi-proper}, $\Label_D$ is well-defined and flagged on $T$.

  Now let $r'<r-2$ denote the highest empty position in column $c$ below row $r$, which is the position into which the cell $x$ lands in passing from $S$ to $T$. Let $y_1,\ldots,y_{r-r'-1}$ denote the cells in column $c$ in rows $r'+1,\ldots,r-1$ taken highest to lowest. We claim $\Label_D(y_h)<\Label_D(x)$ in $S$ for each $h$. This is true for $y_1$ by the previous case, so we proceed by induction, assuming $\Label_D(y_h)<\Label_D(x)$ for each $h<n$. Suppose, for contraction, $\Label_D(y_n)>\Label_D(x)$. By Definition~\ref{def:kohnert-tableaux}\ref{i:invert} applied to each cell above up through row $r$, there must be cells $z,z_1,\ldots,z_{n-1}$ in column $c+1$ of $\rect(S_{>c})$ with $\Label_D(z)=\Label_D(x)$ and $\Label_D(z_h)=\Label_D(y_h)$ and each lies strictly above $y_n$ and weakly below the cell in column $c$ with the same label. Since these cells are ordered $z,z_1,\ldots,z_{n-1}$ from top to bottom, this places $z$ in row $r$, contradicting that going from $S$ to $T$ is a Kohnert move. Thus the claim is proved. Now we may repeat the argument of the previous case, where in $T$ we regard each $y_i$ either as remaining in place if there is another cell in the same row with the same label or as moving down one row otherwise. The same analysis applies, showing once again $\Label_D$ is well-defined and flagged on $T$.
\end{proof}

Label $c$-pairing and column $c$-pairing are compatible for proper labelings coming from southwest diagrams, and so in this case rectified cells are never label $c$-paired.

\begin{lemma}
  For $\Label$ a proper, southwest labeling of $T$, a cell $x$ in column $c+1$ of $T$ is column $c$-paired if and only if it is label $c$-paired with respect to $\Label$.
  \label{lem:same-move}
\end{lemma}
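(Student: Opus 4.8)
I would first note that the lemma concerns only the \emph{paired-or-unpaired status} of each cell $x$ in column $c+1$, not the identity of its partner, so I do not need the two pairings to produce the same matching. Both the column $c$-pairing of Definition~\ref{def:KD-cpair} and the label $c$-pairing of Definition~\ref{def:pair-label} are matchings that send a cell of column $c+1$ to a cell of column $c$ lying weakly above it; the only difference is that the label rule additionally requires the partner to carry the largest label weakly below $\Label(x)$. Since the column $c$-pairing is the greedy nearest-above matching, it matches $x$ precisely when, at $x$'s turn in a top-to-bottom sweep, some column $c$ cell weakly above $x$ is still available. I would therefore organize the whole argument around a single top-to-bottom induction maintaining the invariant that label- and column-pairing have matched the \emph{same number} of column $c+1$ cells in every row-range $\{s\ge r\}$. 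Because the number of column $c$ cells weakly above each row is positional and hence common to both pairings, this invariant is equivalent to the two pairings having the same number of available column $c$ cells above each row at every stage, and it reduces the lemma to the single assertion: whenever $x$ has \emph{any} available column $c$ cell weakly above it, it also has one whose label is weakly below $\Label(x)$. Establishing this makes the label pairing a maximum weakly-above matching, and the cumulative-count invariant then forces the matched column $c+1$ sets, and thus the paired statuses, to coincide.

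\textbf{Two structural inputs.} The argument rests on two facts about a proper, southwest labeling $\Label=\Label_D$. First, applying the southwest condition (Definition~\ref{def:diagram-sw}) to the labeling diagram $\D(\Label)$ gives a label-set condition on adjacent columns: any label occurring in column $c+1$ but not in column $c$ is strictly larger than every label occurring in column $c$. Indeed, if $\ell$ labels a cell of column $c+1$ and some label $L>\ell$ occurs in column $c$, then the cells $(c,L)$ and $(c+1,\ell)$ of $\D(\Label)$ force $(c,\ell)\in\D(\Label)$, i.e. $\ell$ occurs in column $c$. Second, I would record the \emph{descent property} baked into Definition~\ref{def:labeling}: a label common to columns $c$ and $c+1$ is placed in column $c$ at the lowest cell weakly above the same-labeled cell of $\rect(T_{>c})$, so equal labels weakly descend from column $c$ to column $c+1$. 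To transfer this from $\rect(T_{>c})$ to $T$ itself, I would use that rectifying $T_{>c}$ only moves cells leftward and never changes their rows, so each cell of column $c+1$ of $T$ keeps its row in $\rect(T_{>c})$; hence a label common to both columns sits in $T$ exactly where the construction places it, and the descent property holds on $T$.

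\textbf{Main step.} Suppose, toward the reduced assertion, that at $x$'s turn there is an available column $c$ cell weakly above $x$ but no such cell with label weakly below $\ell:=\Label(x)$; then every available column $c$ cell weakly above $x$ has label strictly above $\ell$, so $\ell$ is strictly below some column $c$ label. By the label-set condition this forces $\ell$ to occur in column $c$, and by the descent property the label-$\ell$ cell $y_0$ of column $c$ lies weakly above $x$. Since $\Label(y_0)=\ell\le\ell$, the only way $x$ can fail to pair with it is that $y_0$ was already consumed by an earlier (higher) cell $x''$ of column $c+1$ with $\Label(x'')\ge\ell$. I would then run an exchange argument: the greedy rule chose $\ell$ as the largest available column $c$ label weakly below $\Label(x'')$, so any available higher-label column $c$ cell weakly above $x''$ would have been preferred; combining this with the label-set condition applied along the chain of displaced cells (each displacement produces, via southwest in $\D(\Label)$, a further column $c$ cell whose label and position are constrained) yields a strictly decreasing sequence of label values that cannot persist, contradicting $x$ being unmatched. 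This shows the label pairing strands no positionally matchable cell, hence is maximum, and the reduction completes the proof.

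\textbf{Main obstacle.} The hard part is precisely this exchange argument of the third paragraph: controlling, under the greedy largest-label rule, \emph{which} column $c$ cell each higher column $c+1$ cell grabbed, since proper labelings need not be monotone within a column (a large label can sit low, as in the examples of Figures~\ref{fig:labeling} and \ref{fig:re-labeling}). My plan is to keep the top-to-bottom induction with the matched-count invariant so that the count of available column $c$ cells above each row always agrees between the two pairings, and to use the southwest label-set condition repeatedly to ``chase'' the chain of displaced cells to a contradiction. A secondary technical point, handled as above, is the discrepancy that $\Label_D$ is defined against the rectified right part $\rect(T_{>c})$ while the pairing is carried out on $T$; this is resolved by the observation that column $c+1$ cells of $T$ are left in place (same rows) under rectification of $T_{>c}$.
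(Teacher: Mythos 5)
Your setup is sound, and in places cleaner than the paper's: the reduction via the matched-count invariant to the single assertion ``if any column-$c$ cell weakly above $x$ is still available, then one with label at most $\Label(x)$ is available'' is the right formulation (the paper's forward direction and the first half of its converse are essentially this counting argument, done less explicitly), and your two structural inputs --- the label-set consequence of southwestness and the weak descent of equal labels under the greedy algorithm --- are exactly the facts the paper uses. The genuine gap is that the entire mathematical content of the converse lives in your third paragraph's ``exchange argument,'' which you never carry out, and the shape you predict for it is wrong. The paper's chase fixes a single \emph{anchor}: one cell $y$ in column $c$ above $x$ with $\Label(y)>\Label(x)$ that is never label-paired (in your language, your available cell $w$), and builds cells $x_0=x,x_1,x_2,\ldots$ in column $c+1$ and $y_0,y_1,\ldots$ in column $c$ maintaining three invariants at once: $\Label(y_m)=\Label(x_m)<\Label(y)$, all chain cells strictly below $y$, and $y_m$ label-paired to a cell $x_{m+1}$ strictly above $x_m$. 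The anchor is indispensable at \emph{every} step, not just the first: to produce $y_m$ you must apply the southwest/label-set condition to conclude that the label $\Label(x_m)$ occurs in column $c$, and that requires a column-$c$ label \emph{exceeding} $\Label(x_m)$. Since $x_{m+1}$ is label-paired to $y_m$ with $\Label(y_m)=\Label(x_m)$ and column labels are distinct, $\Label(x_{m+1})>\Label(x_m)$: the labels along the chain strictly \emph{increase}, so your claimed ``strictly decreasing sequence of label values'' runs opposite to the actual monotonicity, and a one-shot use of the label-set condition does not suffice. One needs the bound $\Label(x_{m+1})<\Label(y)$ re-established at each step by the greedy comparison ``otherwise $x_{m+1}$ would have selected $y$ over $y_m$,'' which requires $y$ to be unconsumed and positioned above the whole chain. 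Without an anchor, the increasing labels eventually exceed every column-$c$ label you can point to, and the chase stalls rather than yielding a contradiction.

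The fix is available inside your own framework: your witness $w$ (available, weakly above $x$, $\Label(w)>\Label(x)$) is precisely the anchor, and availability is exactly the hypothesis your reduced assertion supplies --- note the paper's $y$ must likewise be read as an \emph{unpaired} such cell, since consumed large-label cells above $x$ are perfectly consistent with $x$ being unpaired and must not trigger the contradiction. Since labels are assigned smallest first and $\Label(y_m)<\Label(w)$, the greedy algorithm places each $y_m$ at the lowest eligible cell while $w$'s position is still unlabeled and eligible, forcing $y_m$ strictly below $w$; the pairing rule puts $x_{m+1}$ weakly below $y_m$, and the availability of $w$ strictly above $x_{m+1}$ forces $\Label(x_{m+1})<\Label(w)$, closing the loop of invariants. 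The contradiction is then the finiteness of the region strictly below $w$, through which the distinct cells $x_0,x_1,x_2,\ldots$ ascend --- i.e., the paper's infinite-chain argument transplanted into your reduction. As written, however, your proposal asserts this step rather than proving it, acknowledges it as the main obstacle, and mispredicts its mechanism, so it does not yet constitute a proof of the lemma.
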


\begin{proof}
  Suppose the result holds for all cells (possibly none) above $x$ in column $c+1$. If $x$ is label $c$-paired, say to a cell $y$ in column $c$, then $y$ lies weakly above $x$ and is not label $c$-paired in $T$ with a cell above $x$, and hence $y$ is not column $c$-paired in $T$ with a cell above $x$. Thus $x$ is column $c$-paired in $T$.

  Conversely, suppose $x$ is not label $c$-paired in $T$. Then every cell $y$ in column $c$ weakly above $x$ with $\Label(y)\leq \Label(x)$ is label $c$-paired with some cell above $x$. If there is no cell $y$ in column $c$ weakly above $x$ with $\Label(y)> \Label(x)$, then all cells above $x$ in column $c$ are column $c$-paired with a cell above $x$, and so $x$ is not column $c$-paired. Thus assume there exists a cell $y$ above $x$ in column $c$ with $\Label(y)> \Label(x)$. To arrive at a contradiction, we construct an infinite sequence of distinct cells $x_0,x_1,x_2\ldots$ in column $c+1$ and $y_0,y_1,\ldots$ in column $c$ such that:
  \begin{itemize}
  \item[(a)] $\Label(y_i) = \Label(x_i) < \Label(y)$,
  \item[(b)] $y_i,x_i$ lie strictly below $y$,
  \item[(c)] $y_i$ is label $c$-paired with $x_{i+1}$.
  \end{itemize}
  To begin, we simply take $x=x_0$. Now assume, for some $m\geq 0$, we have constructed $x_0,\ldots,x_m$ and $y_0,\ldots,y_{m-1}$ for which these properties hold, and we will construct $y_m$ and $x_{m+1}$. Since $\Label$ is southwest, and since, by (a), $\Label(x_m) < \Label(y)$, as we rectify there must exist a cell $y_m$ in column $c$ with $\Label(y_m) = \Label(x_m)$, proving (a) for $y_m$. Since $y$ lies above $x_m$ and $\Label(y) > \Label(y_m)$, the labeling algorithm ensure $y_m$ lies below $y$ since $y_m$ is labeled before $y$, proving (b) for $y_m$. The labeling algorithm also places $y_m$ weakly above $x_m$ since they share the same label, and so for $x_m$ not to have chosen $y_m$ for its label $c$-pair, there must be some cell, say $x_{m+1}$, above $x_m$ in column $c+1$ to which $y_m$ is label $c$-paired, proving (c) for both $y_m,x_{m+1}$. By the label pairing rule, $x_{m+1}$ is weakly below $y_m$ and so below $y$, proving (b) for $x_{m+1}$. Finally, since $\Label(y_m) < \Label(y)$, the label pairing rule ensures $\Label(y) > \Label(x_{m+1})$, since otherwise $x_{m+1}$ would select $y$ over $y_m$ for its label $c$-pair, thereby proving (a) for $x_{m+1}$. We may continue indefinitely, creating an infinite sequence of distinct cells thereby contradicting the finiteness of the diagram $T$. Therefore no such $y$ exists, and so $x$ is not column $c$-paired in $T$. 
\end{proof}

For $D$ southwest, Theorem~\ref{thm:label-necessary} gives a necessary condition for an arbitrary diagram $T$ to be in $\KD(D)$ by testing if the labeling algorithm for $D$ is well-defined and flagged on $T$. Amazingly, this is also sufficient. 

\begin{theorem}
  For $D$ a southwest diagram, if $T$ is a diagram for which $\Label_{D}$ is well-defined and flagged, then $T\in\KD(D)$.
  \label{thm:label-sufficient}
\end{theorem}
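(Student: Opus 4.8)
The plan is to prove this as the converse of Theorem~\ref{thm:label-necessary}, building a path of Kohnert moves from $D$ down to $T$ one reverse Kohnert move at a time. Since $\Label_D$ is defined only for diagrams sharing the column weight of $D$, the hypothesis already forces $T$ and $D$ to have equal column weight, and by Definition~\ref{def:labeling} the labels of $\Label_D$ in each column $c$ form exactly the set $\{r : (c,r)\in D\}$. Because $\Label_D$ is flagged, every cell lies weakly below its own label, so
\[
  h(T)\ :=\ \sum_{x\in T}\mathrm{row}(x)\ \leq\ \sum_{x\in T}\Label_D(x)\ =\ \sum_{(c,r)\in D} r\ =:\ h(D),
\]
with equality if and only if every cell sits in the row equal to its label; since the label set of each column is the row set of $D$ there, equality holds exactly when $T=D$. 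This supplies the base case ($h(T)=h(D)$ forces $T=D\in\KD(D)$) and the nonnegative quantity $h(D)-h(T)$ on which I would induct.

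For the inductive step, assume $T\neq D$, so some cell lies strictly below its label. The goal is to produce a reverse Kohnert move to a diagram $T'$ with $h(T')>h(T)$ on which $\Label_D$ is again well-defined and flagged; induction then gives $T'\in\KD(D)$, and since $T$ is recovered from $T'$ by a single forward Kohnert move, the nesting $\KD(T')\subseteq\KD(D)$ yields $T\in\KD(D)$. Concretely, I would process columns from right to left as in rectification, locate a displaced cell $x$ in the relevant column, and lift $x$ up its column to the position it would occupy as the source of the Kohnert move analyzed in Theorem~\ref{thm:label-necessary}, chosen so that $x$ becomes the rightmost cell of its target row and the lift is a genuine reverse Kohnert move; the target row is at most $\Label_D(x)$, so $x$ remains flagged after the lift.

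The heart of the argument, and the step I expect to be the main obstacle, is verifying that this reverse move preserves the labeling conditions — indeed, that a valid such move exists at all is not automatic, since we cannot presume $T\in\KD(D)$. Mirroring the induction of Theorem~\ref{thm:label-necessary} run backwards, I would define a labeling $\Label'$ on $T'$ by keeping the labels of $\Label_D(T)$ in the same column order, with $x$ retaining its label; $\Label'$ is then strict with label diagram $D$, it is flagged for the unmoved cells automatically and for $x$ since it rises no higher than its label, and the remaining task is to show $\Label'$ is semi-proper. Here the southwest hypothesis is essential: by Lemma~\ref{lem:same-move}, label $c$-pairing and column $c$-pairing coincide for proper southwest labelings, so rectified cells are never label paired and the invert condition Definition~\ref{def:kohnert-tableaux}\ref{i:invert} pins down the positions of same-labeled witnesses in adjacent columns. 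Checking semi-properness of $\Label'$ then reduces to the same delicate case analysis as in Theorem~\ref{thm:label-necessary} — organized by whether the lifted cell jumps over cells of larger or smaller label and by the presence of an equal-labeled cell in column $c+1$ of $\rect(T_{>c})$ — run in the reverse direction. Once $\Label'$ is shown semi-proper and flagged with respect to the southwest diagram $D$, Lemma~\ref{lem:semi-proper} immediately upgrades it to the conclusion that $\Label_D$ is well-defined and flagged on $T'$, closing the induction.
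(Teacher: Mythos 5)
Your frame coincides with the paper's: your statistic $h(D)-h(T)$ is exactly the paper's label displacement $\delta_D(T)=\sum_{x\in T}\bigl(\Label_D(x)-\mathrm{row}(x)\bigr)$, the base case $\delta_D(T)=0\Rightarrow T=D$ is identical, and the inductive step --- one reverse Kohnert move to a diagram on which $\Label_D$ is again well-defined and flagged, closed off via Lemma~\ref{lem:semi-proper} --- is the paper's plan. But the entire content of the theorem lives in the step you defer, and the specific mechanism you posit for it is wrong in general. You require the lifted cell $x$ to retain its label and land in a row at most $\Label_D(x)$. The paper instead splits into two cases according to whether $\rect(T_{\geq c})$ is a composition diagram for every column $c$; this dichotomy, absent from your sketch, is what guarantees that the gap row $s+1$ atop $x$'s contiguous block contains no cell to the right, hence that the lift is a genuine reverse Kohnert move at all. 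In the first case the paper does maintain labels as you propose, but only after a careful selection rule (take $x$ of maximal label among displaced cells all of whose same-labeled cells to the left sit in their home rows), which is precisely what yields $\Label_D(x)\geq s+1$. In the second case the cell that must be lifted is the highest cell of the rightmost unrectified column that label-pairs strictly higher to its left, and its label can be smaller than the target row. For example, with $D=\D((0,2,0,2))$ and $S=\{(1,2),(2,2),(1,4),(2,1)\}\in\KD(D)$, the displaced cell $(2,1)$ carries label $2$; lifting it in column $2$ lands at row $3$, and $\Label_D$ on the lifted diagram $\{(1,2),(2,2),(1,4),(2,3)\}$ places label $4$ in row $3$ and label $2$ in row $2$ --- the moved cell does not keep its label. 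One can dodge this particular instance by lifting the label-$4$ cell $(2,2)$ instead, but you nowhere prove that a label-retaining choice always exists, and the paper's proof indicates it does not come for free: there the labels are shifted upward along an increasing chain $x_{i_0},\ldots,x_{i_m}$ within the lifted column so that the largest label lands in row $s+1$, and the resulting labeling is then shown to equal $\Label_D$ on $S$ \emph{directly}; Lemma~\ref{lem:semi-proper} is not the mechanism in that case.

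Consequently your characterization of the remaining work as ``the same delicate case analysis as in Theorem~\ref{thm:label-necessary} run in the reverse direction'' is exactly where the gap sits. The necessity proof inducts over Kohnert moves that are given to exist, so it only has to track labels along them; here both the existence of a valid reverse move and the compatibility of the labels must be manufactured, and the paper does so with three concrete ideas your proposal lacks: the composition-diagram dichotomy on the rectified tails $\rect(T_{\geq c})$, the maximal-label selection rule in the first case, and the chain-shift relabeling in the second. To your credit you flag this step as the main obstacle, and your invocation of Lemma~\ref{lem:same-move} and Definition~\ref{def:kohnert-tableaux}\ref{i:invert} is the right toolkit for it; but flagging the obstacle does not clear it, and the invariant you build the induction around (the moved cell keeps its label and stays weakly below it) fails in the second case, so the proof as proposed cannot be completed without replacing that mechanism.
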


\begin{proof}
  Whenever $\Label_D$ is well-defined on $T$, we can consider the \emph{label displacement} defined by $\delta_D(T) = \sum_{x \in T} \Label_D(x) - \mathrm{row}(x)$, and this will be nonnegative whenever $\Label_D$ is flagged on $T$. We prove the result by induction on this statistic. If $\delta_D(T) = 0$, then we must have $\Label_D(x) = \mathrm{row}(x)$ for every cell $x$ of $T$, and so $T=D \in \KD(D)$. Suppose $\Label_D$ is well-defined and flagged on some diagram $T$ with $\delta_D(T)>0$. We will construct a diagram $S$ such that $T$ is obtained from a Kohnert move on $S$ and $\Label_D$ is well-defined and flagged on $S$. Since the labels within each column are the same for $S,T$, but cells are moved down in $S$, we have $\delta_D(S) < \delta_D(T)$. Thus by induction $S\in\KD(D)$, and so $T\in\KD(S)\subseteq\KD(D)$. 
  
  Suppose $\rect(T_{\geq c})$ is a composition diagram for every column $c$. Since $\Label_D$ is flagged on $T$ and $\delta_D(T)>0$, there exists a cell $x$ with $\Label_D(x) > \mathrm{row}(x)$. Consider cells $x$ with $\Label_D(x) > \mathrm{row}(x)$ such that for all cells $y$ left of $x$ with $\Label_D(y) = \Label_D(x)$, we have $\mathrm{row}(y) = \Label_D(x)$. Among all such cells, choose $x$ with maximal label, say in row $r$, column $c$. Let $s\geq r$ be such that all rows $r,r+1,\ldots,s$ have a cell in column $c$ and there is no cell in row $s+1$, column $c$. Since $\rect(T_{\geq c}))$ is a composition diagram, there is no cell weakly to the right of column $c$ in row $s+1$, and so we may lift $x$ to row $s+1$ by a reverse Kohnert move to construct the diagram $S$. Let $\Label$ be the labeling on $S$ induced from $\Label_D(T)$ by maintaining all labels, including for $x$. By choice of $x$, the cells in rows $r+1,\ldots,s$, column $c$ must have label strictly less than $\Label_D(x)$, and so $\Label_D(x) \geq s+1$. Thus $\Label$ is flagged for $S$. We claim $\Label$ is semi-proper on $S$. By Definition~\ref{def:kohnert-tableaux}\ref{i:descend}, for each row $r+1,\ldots,s$, there must be another cell to the right of column $c$ that label pairs to the cell in the same row in column $c$, and again by choice of $x$ these cells have smaller label than $\Label_D(x)$. Thus the labeling algorithm in column $c$ of $S$ will place $\Label_D(x)$ into $x$, ensuring $\Label = \Label_D$ weakly to the right of column $c$, and so it is semi-proper there. Looking left, if there is no cell $y$ with $\Label(y)=\Label(x)$ in the column to the left, then the labeling remains semi-proper. If at some point $x$ is label pairs with a cell in row $s+1$, then thereafter the labeling remains semi-proper since it agrees with $\Label_D(Y)$ to the left. If neither of these is the case, then we must have encountered a cell $y$ with $\Label(y)=\Label(x)$, which by choice of $x$ lies in row $\Label(x) \geq s+1$, and so must be weakly above row. Thus $\Label$ is semi-proper and flagged on $S$, so by Lemma~\ref{lem:semi-proper}, $\Label_D$ is well-defined and flagged on $S$, proving the theorem in this case.

  Otherwise, choose $c$ so that $\rect(T_{\geq c'})$ is a composition diagram for every column $c'>c$, but $\rect(T_{\geq c})$ is not a composition diagram. In this case there exists a cell in column $c$ that label $(c-1)$-pairs with a cell in column $c-1$ that lies strictly higher. Let $x$ be the highest such cell, say in column $c$, row $r$. AS before, let $s\geq r$ be such that all rows $r,r+1,\ldots,s$ have a cell in column $c$ and there is no cell in row $s+1$, column $c$. Since $\rect(T_{> c}))$ is a composition diagram, there is no cell weakly to the right of column $c$ in row $s+1$, and so we may lift $x$ to row $s+1$ by a reverse Kohnert move to construct the diagram $S$. Let $\Label$ be the labeling on $S$ induced from $\Label_D(T)$ by maintaining all labels, including for $x$. Since $S_{>c} = T_{>c}$, $\Label(S) = \Label_D(S)$ strictly to the right of column $c$. In column $c$ of $T$, let $x_{i}$ denote the cell in row $r+i$ for $i=0,\ldots,s-r$. Define $0=i_0<i_1<\cdots<i_m$ by, for $j>0$, taking $x_{i_j}$ to be the lowest cell above $x_{i_{j-1}}$ and weakly below row $s$ with $\Label_D(x_{i_j})>\Label_D(x_{i_{j-1}})$. In $S$, let $i_{m+1}=s+1$ and shift labels by setting $\Label(x_{i_{j+1}}) = \Label_D(x_{i_{j}})$ for $0\leq j \leq m$, so that $\Label(S) = \Label_D(S)$ in column $c$ as well. Moreover, since $\Label_D$ is flagged on $T$, we have $\Label(x_{i_{j+1}}) = \Label_D(x_{i_{j}}) > \Label_D(x_i) \geq \mathrm{row}(r+i)$ for all $i_j < i < i_{j+1}$, showing so $\Label$ is flagged on $S$ in column $c$, and so $\Label$ is flagged on all of $S$. 

  Looking left, by choice of $x$, each cell in column $c$, row $r+1,\ldots,s$ must either label $(c-1)$-pair with the cell immediately to its left or not label $(c-1)$-pair at all. If $x_i$ is the lowest cell in column $c$ above $x$ and weakly below row $s$ not label $(c-1)$-paired, then by Lemma~\ref{lem:same-move}, neither is it column $(c-1)$-paired, in which case $x$ is not column $(c-1)$-paired either, a contradiction. Thus for $1 \leq i \leq s-r$, we may let $y_i$ in row $r+i$, column $c$ be the cell to which each $x_i$ is label $(c-1)$-paired in $T$, and let $y$ denote the cell in column $c-1$ to which $x$ is label $(c-1)$-paired which necessarily lies weakly above row $s+1$. By the southwest condition, there is a cell with label $\Label_D(x_{i_j})$ in column $c-1$ of $T$ for every $j=0,\ldots,{m-1}$, and by Definition~\ref{def:kohnert-tableaux}\ref{i:descend}, it lies weakly above row $r+i_j$. Since $x$ does not label $(c-1)$-pair with the cell immediately to its left, if it exists, this ensures $\Label_D(y_{i_{j-1}}) = \Label_D(x_{i_{j}})$. Therefore in $\Label(S)$, every cell in column $c$ not in row $s+1$ label $(c-1)$-pairs with the same cell in column $c-1$ to which it label $(c-1)$-pairs in $T$, and the cell in column $c$, row $s+1$ label $(c-1)$-pairs with $y$. Thus $\Label(S)=\Label_D(S)$ for all columns, showing $\Label_D$ is well-defined and flagged on $S$. The theorem now follows.
\end{proof}

Combining Theorems~\ref{thm:label-necessary} and \ref{thm:label-sufficient}, for $D$ southwest, $\Label_D$ is well-defined and flagged for $T$ if and only if $T \in \KD(D)$.

\subsection{Rectified labelings}
\label{sec:re-label}

We turn our attention now to showing rectification of labelings commutes with the Kohnert crystal operators while preserving the proper and flagged properties of the labelings. 

\begin{lemma}
  Let $T$ be a diagram with labeling $\Label$. For every column $c$, $\Label$ is flagged if and only if $\Rect_c^*(\Label)$ is flagged.
  \label{lem:flagged}
\end{lemma}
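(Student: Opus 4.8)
The plan is to exploit the single most important structural feature of rectification: by Definition~\ref{def:KD-rect} it moves cells only horizontally, so it never changes the row of any cell. Since flaggedness asks nothing more than $\Label(x) \ge \mathrm{row}(x)$ cell-by-cell, the whole statement can be settled by tracking how individual labels move relative to their (fixed) rows. Inspecting Definition~\ref{def:re-label}, every cell outside columns $c$ and $c+1$ retains both its row and its label, and the original cells of column $c$ (in particular the partners $y$) are untouched; so it suffices to analyze the cells of column $c+1$. These split, according to the original label $c$-pairing, into the unpaired cells $x_1,\dots,x_m$, which slide left into column $c$ in their rows, and the label $c$-paired cells $z$, which stay in column $c+1$ and are reassigned their partner's label $\Label(y)$ in the final step of Definition~\ref{def:re-label}. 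Two facts will drive the argument: (i) a partner $y$ lies weakly above its $z$ and satisfies $\Label(y) \le \Label(z)$, directly from Definition~\ref{def:pair-label}; and (ii) an unpaired cell $x_i$ is never the target $z$ of a swap, and each swap it undergoes strictly raises its label, so its final label is at least its original label $\Label(x_i)$.

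For the forward implication I would assume $\Label$ flagged. Each paired cell $z$, in its unchanged row $r$, receives label $\Label(y) \ge \mathrm{row}(y) \ge r$ by (i) and flaggedness, so it stays flagged; each unpaired cell $x_i$ lands in column $c$ in its original row with label at least $\Label(x_i) \ge \mathrm{row}(x_i)$ by (ii); all remaining cells are unchanged. Hence $\Rect_c^*(\Label)$ is flagged.

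For the converse I would argue the contrapositive: assuming $\Label$ has a cell $x$ with $\Label(x) < \mathrm{row}(x)$, I produce such a cell in $\Rect_c^*(\Label)$. If $x$ lies outside column $c+1$ it is carried through unchanged and still violates the flag; if $x$ is a paired cell $z$, then by (i) its new label satisfies $\Label(y) \le \Label(z) < \mathrm{row}(z)$, again a violation in the same row. The delicate case is an unpaired $x_i$, whose label may strictly increase and so appear to repair the deficiency. Here the key is to follow the swap: if $x_i$ undergoes no swap it keeps its deficient label and violates the flag in its new column-$c$ position; if $x_i$ swaps with a paired cell $z$ strictly above it, the swap threshold forces $\Label(y) \le \Label(x_i) < \mathrm{row}(x_i) < \mathrm{row}(z)$ for $z$'s partner $y$, and since the final step resets $z$'s label to exactly $\Label(y)$, the cell $z$ then violates the flag. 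Either way a violation survives.

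I expect the main obstacle to be exactly this last case, since raising the labels of cells moving into column $c$ means flaggedness of an individual moved cell does not transfer backwards on its own. The resolution is structural: any such increase is sourced from a paired cell lying above, and the swap condition $\Label(y) \le \Label'(x_i)$ guarantees that this partner inherits a label too small for its strictly higher row. Carrying this out carefully will require verifying that an unpaired cell can never serve as the target of a swap and that a paired cell's final label equals $\Label(y)$ irrespective of any intervening swaps, both of which follow from the bookkeeping in Definition~\ref{def:re-label} but deserve explicit mention.
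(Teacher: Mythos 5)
Your proof is correct and takes essentially the same route as the paper's: both directions reduce to a case analysis on paired versus unpaired cells of column $c+1$, using that rectification preserves rows, that a paired cell's final label is its partner's weakly smaller label coming from a weakly higher row, and that swaps only increase an unpaired cell's label. In fact, your resolution of the delicate converse case --- relocating the violation to the swap partner $z$ via the threshold $\Label(y)\le\Label(x_i)<\mathrm{row}(x_i)<\mathrm{row}(z)$ together with the final reset of $z$'s label to $\Label(y)$ --- is handled more carefully than in the paper's own write-up, which in that subcase asserts (with a garbled inequality) that the violation persists at the relabeled cell itself, even though its label has strictly increased.
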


\begin{proof}
  Suppose $\Label$ is flagged, and consider cells in column $c+1$. If a cell $z$ in column $c+1$ label $c$-pairs with a cell $y$ in column $c$, then $y$ is weakly above $z$ and $\Label(y) \leq \Label(z)$. Thus changing the label of $z$ to match that of $y$ maintains the flagged condition for $z$ since $y$ is flagged and $z$ lies weakly below with the same label. Any cell $x$ in column $c+1$ that is not label $c$-paired will be re-labeled with the label of some $z$ above $x$ with $\Label(z) > \Label(x)$, so once again the flagged condition is maintained if it holds for $\Label$ since the larger label moves down.

  Suppose $\Label$ is not flagged with some cell $x$ in column $c+1$ satisfying $\Label(x) < \mathrm{row}(x)$. If $x$ is label $c$-paired with $y$ in column $c$, then $\mathrm{row}(y) \geq \mathrm{row}(x) > \Label(x) \geq \Label(y)$ is a violation of the flagged condition in column $c$ that is maintained by re-labeling. If $x$ is not label $c$-paired, then if it does not get relabeled, of course it creates a violation in $\Rect_c^*(\Label)$ as well. If it does get relabeled, say by some $z$ in column $c+1$, then $\Label(z) > \Label(x) > \mathrm{row}(x)$, so changing the label maintains the violation of the flagged condition at $x$.
\end{proof}

To improve upon the injective map induced by rectification, we show the re-labeled rectification of a proper, flagged labeling is a Kohnert tableau.

\begin{theorem}
  For $\Label$ a proper, southwest labeling of a diagram, $\D(\rect(\Label))$ is a composition diagram, and $\rect(\Label)$ is a Kohnert tableau if and only if $\Label$ is flagged.
  \label{thm:stable}
\end{theorem}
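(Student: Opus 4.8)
The plan is to reduce the theorem to the already-established theory of Kohnert labelings on rectified diagrams (Definition~\ref{def:labelling_algorithm} together with the Assaf--Searles criterion \cite[Theorem~2.8]{AS18}) by showing that rectification of labelings preserves properness and southwestness while, at the level of label content, simply rectifying the content diagram. Throughout I write $D = \D(\Label)$, which is southwest because $\Label$ is, and I use that $\Label = \Label_D$ and that $\rect(\Label)$ is a labeling of the rectified diagram $\rect(T)$.

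First I would establish the content identity $\D(\rect(\Label)) = \rect(D)$. I would prove this one column-pair at a time, showing $\D(\Rect_c^*(\Label)) = \Rect_c^*(D)$ and that $\Rect_c^*(\Label)$ remains a proper, southwest labeling, and then iterate over all column-pairs (the order being immaterial by Lemma~\ref{lem:rectification}). The engine is Lemma~\ref{lem:same-move}: for a proper southwest labeling the label $c$-pairing coincides with the column $c$-pairing, so the cells that physically move under $\Rect_c$ (the column-unpaired cells of column $c+1$) are exactly the label-unpaired cells singled out in Definition~\ref{def:re-label}, and the two-stage relabeling prescribed there is precisely what makes the set of labels in each column after relabeling equal the set of occupied rows of $\Rect_c^*(D)$ in that column. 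Granting this, $\D(\rect(\Label)) = \rect(D)$ is southwest by the first half of Proposition~\ref{prop:sw} and is rectified by construction, hence a composition diagram $\D(\comp{a})$ by the second half of Proposition~\ref{prop:sw}; this is the first assertion, and it requires no flaggedness hypothesis.

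For the equivalence, note that $\rect(\Label)$ is then a proper labeling of the rectified diagram $\rect(T)$ whose content is the composition diagram $\D(\comp{a})$, so Definition~\ref{def:labeling} specializes to Definition~\ref{def:labelling_algorithm} and, by uniqueness of the greedy labeling, $\rect(\Label) = \Label_{\comp{a}}(\rect(T))$. The Assaf--Searles criterion \cite[Theorem~2.8]{AS18} now says that for the rectified diagram $\rect(T)$ the Kohnert labeling $\Label_{\comp{a}}(\rect(T))$ is a Kohnert tableau (equivalently $\rect(T)\in\KD(\D(\comp{a}))$) if and only if it is flagged, and Lemma~\ref{lem:flagged}, applied once per column-pair, shows $\rect(\Label)$ is flagged if and only if $\Label$ is. Chaining these gives the second assertion. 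The easy direction is even more direct: if $\rect(\Label)$ is a Kohnert tableau then condition (ii) of Definition~\ref{def:kohnert-tableaux} is exactly the flagged condition for $\rect(\Label)$, and Lemma~\ref{lem:flagged} pulls this back to $\Label$.

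The main obstacle is the content identity $\D(\Rect_c^*(\Label)) = \Rect_c^*(D)$ together with the preservation of properness under a single $\Rect_c^*$. Verifying this demands careful bookkeeping of the relabeling in Definition~\ref{def:re-label} — first the swaps lifting large labels off the unpaired cells, then the reassignment of each paired cell to its partner's label — and checking, via Lemma~\ref{lem:same-move} and the southwest hypothesis, that the greedy algorithm of Definition~\ref{def:labeling} reproduces exactly these labels on $\Rect_c^*(T)$ with content $\Rect_c^*(D)$. This is the case analysis that the preceding lemmas of the section were designed to support; once it is in place, the reduction to the rectified-diagram theory is routine.
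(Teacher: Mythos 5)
Your reduction hinges on the content identity $\D(\rect(\Label)) = \rect(D)$ for $D = \D(\Label)$, and that identity is false. The relabeling in Definition~\ref{def:re-label} genuinely changes label \emph{values}: an unpaired cell of column $c+1$ first swaps its label with a strictly larger label above it, and every label $c$-paired cell then inherits the (possibly strictly smaller) label of its partner in column $c$, so large labels can disappear entirely. The paper's own running example exhibits this: for the labeled diagram on the right of Fig.~\ref{fig:labeling}, the labels used are $2,4,5,7$, so $D=\D(\Label)$ has cells in rows $2,4,5,7$; but the rectified labeling computed in Fig.~\ref{fig:re-labeling} uses only the labels $2,4,5$ --- all three $7$'s are relabeled away --- so $\D(\rect(\Label))$ has no cell in row $7$. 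Since the diagram rectification operators $\Rect_c$ move cells within their rows, $\rect(D)$ occupies exactly the rows $2,4,5,7$ that $D$ does, whence $\D(\rect(\Label)) \neq \rect(D)$. Consequently the first assertion cannot be routed through Proposition~\ref{prop:sw}, and the identification $\rect(\Label) = \Label_{\comp{a}}(\rect(T))$ with $\comp{a}$ read off from $\rect(D)$ --- the step feeding into the Assaf--Searles criterion \cite[Theorem~2.8]{AS18} --- collapses with it.

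There is also a circularity lurking in your plan: the column-pair iteration needs $\Rect_c^*(\Label)$ to remain proper, but properness of the rectified labeling with respect to its (new, a priori unknown) content is essentially Theorem~\ref{thm:stable} itself; the paper only extracts that statement \emph{after} the theorem is proved (it is used this way in the proof of Theorem~\ref{thm:main}). The paper's actual argument avoids both problems: it disposes of condition~\ref{i:flag} of Definition~\ref{def:kohnert-tableaux} via Lemma~\ref{lem:flagged} --- a step you have right --- and then inducts on the number of columns of $T$, applying the inductive hypothesis to $T_{>1}$ and directly verifying conditions~\ref{i:label}, \ref{i:descend}, and \ref{i:invert} for $T_1 \sqcup \rect(T_{>1})$, using Lemma~\ref{lem:same-move} to show that exactly the label-unpaired cells move and that they are relabeled before moving. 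In particular, that $\D(\rect(\Label))$ is a composition diagram emerges from this verification of the Kohnert tableau axioms, not from rectifying $\D(\Label)$. Your use of Lemmas~\ref{lem:flagged} and \ref{lem:same-move} is sound, but the reduction to the rectified-diagram theory as you set it up does not go through.
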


\begin{proof}
  By Lemma~\ref{lem:flagged}, $\rect(\Label)$ is flagged if and only if $\Label$ is flagged, so we need only show the labeling $\rect(\Label)$ on the rectified diagram $\rect(T)$ satisfies Definition~\ref{def:kohnert-tableaux}\ref{i:label}, \ref{i:descend}, and \ref{i:invert}. We proceed by induction on the number of columns. If $T,D$ have one column, then $\Label$ labels the cells of $T$ increasing from bottom to top, and so \ref{i:label} labels are distinct, \ref{i:descend} labels do not appear in consecutive columns, and \ref{i:invert} there are no column inversions. Thus we may assume the result for all diagrams with fewer columns than $T$. In particular, the result holds for $T_{>1}$, and so $\rect(T_{>1})$ satisfies Definition~\ref{def:kohnert-tableaux}\ref{i:label}, \ref{i:descend}, and \ref{i:invert}. Consider $T_1 \sqcup \rect(T_{>1})$. 

  For Definition~\ref{def:kohnert-tableaux}\ref{i:label}, we must show each step of re-labeling and rectification maintains the strictness of the labeling. Consider cells $y,x$ in columns $c,c+1$ with $\Label(y)=\Label(x)$. By the labeling algorithm (if $c=1$) or Definition~\ref{def:kohnert-tableaux}\ref{i:descend} (if $c>1$), $y$ must lie weakly above $x$. If $x$ is label $c$-paired with $y$, then by Lemma~\ref{lem:same-move}, $x$ is column $c$-paired and so does not move into column $c$. If $x$ does not label $c$-pair with $y$, then there exists some $z$ strictly above $x$ in column $c+1$ and weakly below $y$ with $\Label(z) > \Label(x)$. By the re-labeling procedure, $x$ will be re-labeled before it moves left, and so the resulting labeling is strict. 

  Definition~\ref{def:kohnert-tableaux}\ref{i:descend} holds for columns $1$ and $2$ by the labeling algorithm and for columns $2$ and beyond by induction. When cells are re-labeled for rectification, any cell in column $c$ that is label $c$-paired with a cell in column $c+1$ lies weakly above it by construction. Any cell that is not label $c$-paired moves left by Lemma~\ref{lem:same-move}, and so there will not be a cell in column $c+1$ with the same label afterward. Thus Definition~\ref{def:kohnert-tableaux}\ref{i:descend} holds throughout rectification.

  For Definition~\ref{def:kohnert-tableaux}\ref{i:invert}, suppose $y$ lies above $x$ in column $1$ with $\Label(y) < \Label(x)$. By the labeling algorithm, this happens only if $\Label(y)$ preferred not to occupy the lower cell $x$, which ensures there exists a cell $z$ in column $2$ weakly below $y$ and strictly above $x$ with $\Label(z) = \Label(y)$. When cells are re-labeled for rectification, any label $c$-paired cell in column $c+1$ gets a weakly smaller label, and so cannot create a new violation of \ref{i:invert}. Any cell $x$ in column $c+1$ not label $c$-paired will move left by Lemma~\ref{lem:same-move}, taking with it the largest label from $z$ above so that the the cell $y$ to which $z$ is label $c$-paired satisfies $\Label(y) < \Label(z)$. Since $z$ will be relabeled with $\Label(y)$, Definition~\ref{def:kohnert-tableaux}\ref{i:invert} is preserved.
\end{proof}

Rectified labels commute with crystal operators, ultimately allowing us to identify the permutation $w$ for the Demazure crystal.

\begin{lemma}
  Let $D$ be a southwest diagram, and $T$ a diagram for which $\Ke_r(T) \neq 0$. Then $\Label_D$ is well-defined for $T$ if and only if $\Label_D$ is well-defined for $\Ke_r(T)$, and in this case $\D(\rect(\Label_D(T))) = \D(\rect(\Label_D(\Ke_r(T))))$. Moreover, if $\Label_D(T)$ is flagged, then $\Label_D(\Ke_r(T))$ is flagged. 
  \label{lem:label-crystal}
\end{lemma}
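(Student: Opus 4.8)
The plan is to combine the diagram-level commutation of rectification with the raising operators (Theorem~\ref{thm:commute}) with the clean chain of labeling results already available for the flagged statement. I first record a reduction for the equality $\D(\rect(\Label_D(T))) = \D(\rect(\Label_D(\Ke_r(T))))$. By Theorem~\ref{thm:stable}, for any proper southwest labeling the diagram $\D(\rect(\Label))$ is a composition diagram; writing it as $\D(\comp{a})$, the composition $\comp{a}$ is exactly the record of which labels occur in which columns of $\rect(\Label)$, since label $i$ occupies precisely columns $1,\ldots,a_i$. Thus proving the equality is the same as proving that passing from $T$ to $\Ke_r(T)$ preserves the per-column label sets of the rectified labeling, i.e. preserves the associated composition $\comp{a}$.

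The heart of the argument is a labeled refinement of Theorem~\ref{thm:commute}: rectification-with-relabeling commutes with $\Ke_r$. Because $\Ke_r(T)\neq 0$, the operator $\Ke_r$ lowers a single cell from row $r+1$ to row $r$ within one column, while rectification only moves cells horizontally; moreover, by Lemma~\ref{lem:same-move}, the cells rectification moves are exactly the column-unpaired cells, which is the same data governing the geometric commutation proved in Theorem~\ref{thm:commute}. I would therefore run the case division of Theorem~\ref{thm:commute} on the row of the rectified cell relative to $r,r+1$, now additionally transporting the label of each cell according to Definition~\ref{def:pair-label} and the relabeling of Definition~\ref{def:re-label}. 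The conclusion is that $\rect(\Label_D(\Ke_r(T)))$ agrees with $\rect(\Label_D(T))$ except for the single cell lowered by $\Ke_r$, which stays in its column; hence the per-column label sets coincide, giving the desired equality. The well-definedness equivalence follows from the same correspondence applied columnwise: the greedy assignment of Definition~\ref{def:labeling} at column $c$ depends only on $\rect(T_{>c})$ with its labels, and the lowered cell of $\Ke_r$ either lies strictly right of $c$, where it commutes with $\rect$ by Theorem~\ref{thm:commute}, or weakly left of $c$, where the relevant data is untouched, so the assignment succeeds for $T$ if and only if it succeeds for $\Ke_r(T)$. Reversing the roles of $T$ and $\Ke_r(T)$, using that the lowering is undone by raising the leftmost unpaired cell of row $r$ (Lemma~\ref{lem:KD-pairing}), gives the full equivalence.

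For the final statement I avoid the labeled commutation and argue directly. If $\Label_D(T)$ is flagged then it is in particular well-defined, so Theorem~\ref{thm:label-sufficient} gives $T\in\KD(D)$. Since $D$ is southwest and $\Ke_r(T)\neq 0$, Theorem~\ref{thm:closed} yields $\Ke_r(T)\in\KD(D)$, whereupon Theorem~\ref{thm:label-necessary} shows $\Label_D(\Ke_r(T))$ is well-defined and flagged. This settles flaggedness in three lines, with no further case work.

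The main obstacle is the labeled refinement of Theorem~\ref{thm:commute}. The unlabeled commutation is already a lengthy multi-case argument, and the new work is to verify that the relabeling swaps of Definition~\ref{def:re-label} interact correctly with the cell lowered by $\Ke_r$ in every case, so that no column gains or loses a label. My main leverage is Lemma~\ref{lem:same-move}: identifying label-pairing with column-pairing for proper southwest labelings lets me reuse the geometry of Theorem~\ref{thm:commute} wholesale, reducing the task to checking case by case that labels are transported consistently rather than redoing the positional analysis.
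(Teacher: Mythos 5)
Your closing argument for the flagged statement is correct and is genuinely slicker than the paper's: the chain $\Label_D(T)$ flagged $\Rightarrow T\in\KD(D)$ (Theorem~\ref{thm:label-sufficient}) $\Rightarrow \Ke_r(T)\in\KD(D)$ (Theorem~\ref{thm:closed}) $\Rightarrow \Label_D(\Ke_r(T))$ well-defined and flagged (Theorem~\ref{thm:label-necessary}) is non-circular, since all three results are proved before and independently of this lemma, whereas the paper establishes flaggedness by hand inside its case analysis. Your reduction of the equality $\D(\rect(\Label_D(T)))=\D(\rect(\Label_D(\Ke_r(T))))$ to preservation of per-column label sets is also sound. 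But note that this shortcut only disposes of the easy add-on: the lemma's real content is the well-definedness equivalence for an \emph{arbitrary} diagram $T$, not assumed to lie in $\KD(D)$, and Theorem~\ref{thm:main} uses exactly the direction your shortcut cannot reach (well-definedness on $U=\Ke_{r_m}\circ\cdots\circ\Ke_{r_1}(S)$ is transported down to $S$ \emph{before} flaggedness or membership in $\KD(D)$ is known), so everything rests on your main argument.

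That main argument has a genuine gap. Writing $c_0$ for the column of the cell $x$ moved by $\Ke_r$, your columnwise dichotomy asserts that for columns $c$ with $x$ weakly left of $c$ the data is handled by Theorem~\ref{thm:commute}, and otherwise ``the relevant data is untouched, so the assignment succeeds for $T$ if and only if it succeeds for $\Ke_r(T)$.'' This is false as stated: for every $c<c_0$ the restrictions $T_{>c}$ and $\Ke_r(T)_{>c}$ differ (both contain $x$, in different rows), and the output of the greedy algorithm of Definition~\ref{def:labeling} genuinely changes to the left of $c_0$ --- the paper shows the labels on the row-$r$/row-$(r+1)$ pairs $y_i,z_i$ in a run of columns $c_0-1,\ldots,c_0-m$ get \emph{swapped} (Fig.~\ref{fig:to-left}), and even in column $c_0$ the claim that $x$ carries its label down is nontrivial, requiring the southwest condition together with Definition~\ref{def:kohnert-tableaux}\ref{i:invert} transported through Theorem~\ref{thm:stable} (the analysis of Fig.~\ref{fig:to-right} forcing $m=1$ there). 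Showing that these swaps are the \emph{only} change, and that well-definedness transfers in both directions, is precisely the content your proposed ``labeled refinement of Theorem~\ref{thm:commute}'' would have to supply; deferring it to ``checking case by case that labels are transported consistently'' leaves the core of the lemma unproved, since your conclusion that the rectified labelings agree except at the single lowered cell is the thing to be established, not a consequence of the unlabeled commutation. A second, subtler obstruction to your plan: Lemma~\ref{lem:same-move} applies only to \emph{proper} labelings, i.e., it presupposes $\Label_D$ is already well-defined on the diagram at hand, so in the backward direction (well-defined on $\Ke_r(T)$ implies well-defined on $T$) you may not invoke it on $T$; this is why the paper runs two dual cases ``for whichever of $T$ or $\Ke_r(T)$ the labeling is well-defined,'' regarding $x$ as lying in row $r+1$ or in row $r$ accordingly.
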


\begin{proof}
  Suppose $x$ in row $r+1$, column $c$ is the cell that moves down when $\Ke_r$ acts on $T$, and suppose $\Label_D$ is well-defined on at least one of $T$ and $\Ke_r(T)$. Since $T$ and $\Ke_r(T)$ coincide for all cells strictly to the right of column $c$, the labeling algorithm proceeds the same for both up to column $c$, so both are well-defined there.

  We claim $T$ and $\Ke_r(T)$ are labeled the same by $\Label_D$ in column $c$ as well, where the label of $x$ is maintained as it moves. If there is no cell in column $c+1$, row $r+1$, then claim follows since $\rect(T_{>c}) = \rect(\Ke_r(T)_{>c})$ and so the set of labels weakly below $x$ is the same in both. Otherwise, let $n \geq 1$ be maximal such that there exist $z_1,\ldots,z_n$ in row $r+1$ with $z_i$ in column $c+i$. Since $\Ke_r$ acts on $x$, each cell $z_i$ must be $r$-paired with some cell $y_i$ in row $r$ weakly left of $z_i$ and, since $x$ is not $r$-paired, strictly right of $x$. Thus there exist $y_1,\ldots,y_n$ in row $r$ with $y_i$ in column $c+i$; see Fig.~\ref{fig:to-right}. If $\Label_D(y_i) > \Label_D(z_i)$, then, by Theorem~\ref{thm:stable} Definition~\ref{def:kohnert-tableaux}\ref{i:invert} applies, and so there exists $z$ with $\Label_D(z) = \Label_D(z_i)$ in column $c+i+1$ weakly below $z_i$ and strictly above $y_i$, forcing $z$ into row $r+1$. If $i<n$, then $z=z_{i+1}$, but if $i=n$, then this contradicts the choice of $m$, so we must have $\Label_D(y_n) < \Label_D(z_n)$. Thus there exists $m \leq n$ such that $\Label_D(z_1) = \cdots = \Label_D(z_{m})$ and $\Label_D(y_{m}) < \Label_D(z_{m}) < \Label_D(y_i)$ for $i < m$. If $m>1$, then since $D$ is southwest, there exists a cell $w$ in column $c+m-1>c$ with $\Label_D(w) = \Label_D(y_m)$. By the labeling algorithm $w$ lies weakly above $y_m$ and every cell below $w$ and weakly above $y_m$ has smaller label, contracting that $\Label_D(y_{m-1}) > \Label_D(y_{m}) = \Label_D(w)$. Thus we must have $m=1$. 

  \begin{figure}[ht]
    \begin{center}
      \begin{tikzpicture}[xscale=0.45,yscale=0.45]
        \node at (1,2) (c1) {$\scriptstyle c$};
        \node at (-0.5,1) (r2) {$\scriptstyle r+1$};
        \node at (-0.5,0) (r1) {$\scriptstyle r$};
        \node at (1,1) (x)  {$\bigcir{x}$};
        \node at (1,0) (x1)  {$-$};
        \node at (2,1) (z1)  {$\bigcir{z_1}$};
        \node at (4,1) (zm)  {$\bigcir{z_m}$};
        \node at (6,1) (zn)  {$\bigcir{z_n}$};
        \node at (7,1) (z)  {$-$};
        \node at (2,0) (y1)  {$\bigcir{y_1}$};
        \node at (4,0) (ym)  {$\bigcir{y_m}$};
        \node at (6,0) (yn)  {$\bigcir{y_n}$};
        \draw[thick,dotted] (z1) -- (zm);  
        \draw[thick,dotted] (zm) -- (zn);  
        \draw[thick,dotted] (y1) -- (ym);  
        \draw[thick,dotted] (ym) -- (yn);  
      \end{tikzpicture}
      \caption{\label{fig:to-right}Situation in $T$ when $\Ke_r$ acts on $x$, where $\Label_D(z_1) = \cdots = \Label_D(z_{m})$ and $\Label_D(y_{m}) < \Label_D(z_{m}) < \Label_D(y_{i})$ for $i<m$.}
    \end{center}
  \end{figure}

  Now consider $\Label_D(x)$ for whichever of $T$ or $\Ke_r(T)$ the labeling is well-defined. If $\Label_D(z_1) > \Label_D(x)$, then $\Label_D$ is well-defined on and agrees on both $T$ and $\Ke_R(T)$ since the labeling algorithm does not change when $x$ moves up to or down below $z_1$. If $\Label_D(z_1) < \Label_D(x)$, then since $D$ is southwest, there exists a cell $w$ in column $c$ of $T$ weakly above $z_1$ with $\Label_D(w) = \Label_D(z_1)$, contradicting that $x$ will have greater label. Similarly, if $\Label_D(z_1) = \Label_D(x)$, then since $\Label_D(y_1) < \Label_D(z_1) = \Label_D(x)$, we arrive at the same contradiction, now with $\Label_D(w) = \Label_D(y_1)$. Thus the claim follows. Notice as well, since $x$ moves down, if $\Label_D(T)$ is flagged, then so, too, is $\Label_D(\Ke_r(T))$.

  Now consider column $c-1$. If there is no cell in column $c-1$, row $r$, then $\Label_D$ agrees on both $T$ and $\Ke_r(T)$ in column $c-1$, and so too for all columns to the left, proving the theorem for this case. Otherwise, let $n \geq 1$ be maximal such that there exist $y_1,\ldots,y_n$ in row $r$ with $y_i$ in column $c-i$. Since $\Ke_r$ acts on $x$, each cell $y_i$ must be $r$-paired with some cell $z_i$ in row $r+1$ weakly right of $y_i$ and, since $x$ is not $r$-paired, strictly left of $x$. Thus there exist $z_1,\ldots,z_n$ in row $r+1$ with $z_i$ in column $c-i$; see Fig.~\ref{fig:to-left}. We must have $\Label_D(z_i) \leq \Label_D(z_{i-1})$ for $i>1$, since by the southwest condition there is a cell in column $c-i$ with label $\Label_D(z_{i-1})$ and the labeling algorithm would prefer $z_i$ unless it is already labeled, and by the same logic $\Label_D(y_i) \leq \Label_D(y_{i-1})$. If $\Label_D(y_1) < \Label_D(x)$, then moving $x$ up or down does not change the labeling algorithm in column $c-1$, and so there is no change further to the left either, proving the theorem for this case as well. Thus we may assume $\Label_D(y_1) \geq \Label_D(x)$, and we split into two dual cases.
  
  \begin{figure}[ht]
    \begin{center}
      \begin{tikzpicture}[xscale=0.45,yscale=0.45]
        \node at (-1,2) (c1) {$\scriptstyle c$};
        \node at (-8.5,1) (r2) {$\scriptstyle r+1$};
        \node at (-8.5,0) (r1) {$\scriptstyle r$};
        \node at (-1,1) (x)  {$\bigcir{x}$};
        \node at (-1,0) (x1)  {$-$};
        \node at (-2,1) (z1)  {$\bigcir{z_1}$};
        \node at (-4,1) (zm)  {$\bigcir{z_m}$};
        \node at (-6,1) (zn)  {$\bigcir{z_n}$};
        \node at (-7,0) (y)  {$-$};
        \node at (-2,0) (y1)  {$\bigcir{y_1}$};
        \node at (-4,0) (ym)  {$\bigcir{y_m}$};
        \node at (-6,0) (yn)  {$\bigcir{y_n}$};
        \draw[thick,dotted] (z1) -- (zm);  
        \draw[thick,dotted] (zm) -- (zn);  
        \draw[thick,dotted] (y1) -- (ym);  
        \draw[thick,dotted] (ym) -- (yn);  
      \end{tikzpicture}
      \caption{\label{fig:to-left}Situation in $T$ when $\Ke_r$ acts on $x$, where $\Label_D(z_1) = \cdots = \Label_D(z_{m})$ and $\Label_D(z_{m}) < \Label_D(y_{i})$ for $i<m$.}
    \end{center}
  \end{figure}

  Suppose $\Label_D$ is well-defined on $T$ and regard $x$ as lying in row $r+1$. In this case $\Label_D(y_1) > \Label_D(x)$ as it cannot be equal since $x$ lies above $y_1$. Thus there exists $n \geq m \geq 1$ maximal such that $\Label_D(y_i) > \Label_D(x)$. Since $\Label_D(y_i) > \Label_D(x) \geq \Label_D(z_i)$, we must in fact have $\Label_D(z_i) = \Label_D(z_{i-1})$ for $i>1$ and $\Label_D(z_1) = \Label_D(x)$ by Definition~\ref{def:kohnert-tableaux}\ref{i:invert}, which applies by Theorem~\ref{thm:stable}. Thus in columns $c-i$ of $\Ke_r(T)$ for $i=1,\ldots,m$, $\Label_D$ will swap the labels on $y_i,z_i$, with no other changes within the columns, so that from column $c-m-1$ to the left the labeling remains the same. Hence $\Label_D(x)$ is well-defined on $\Ke_r(T)$. Moreover, since upward moving labels replace smaller labels, if $\Label_D$ is flagged on $T$ then $\Label$ is flagged on $\Ke_r(T)$.

  Suppose $\Label_D$ is well-defined on $\Ke_r(T)$ and regard $x$ as lying in row $r$. Then $\Label_D(y_1) \leq \Label_D(x)$ as they are adjacent in the same row, and since the reverse inequality also holds, we in fact have $\Label_D(y_1) = \Label_D(x)$. Thus there exists $n \geq m \geq 1$ maximal such that $\Label_D(y_i) = \Label_D(x)$. Since there is no cell in row $r+1$, column $c$, we must have $\Label_D(y_i) < \Label_D(z_i)$, since otherwise we contradict Definition~\ref{def:kohnert-tableaux}\ref{i:invert}, which applies by Theorem~\ref{thm:stable}. Thus in columns $c-i$ of $T$ for $i=1,\ldots,m$, $\Label_D$ will swap the labels on $y_i,z_i$, with no other changes within the columns, so that from column $c-m-1$ to the left the labeling remains the same. Hence $\Label_D(x)$ is well-defined on $T$, and the theorem follows.
\end{proof}

We can now tighten Theorem~\ref{thm:embed} by identifying a Demazure crystal onto which each connected component of the Kohnert crystal maps.

\begin{theorem}
Let $D$ be a southwest diagram and $\Koh\subseteq \KD(D)$ any connected component of the Kohnert crystal on $\KD(D)$. Then rectification is a well-defined, weight-preserving bijection
  \[ \rect:\Koh \stackrel{\sim}{\longrightarrow} \B_w(\lambda) \]
  satisfying $\rect(\Ke_i(T)) = \e_i(\rect(T))$ for all $T\in\Koh$ and all $i \geq 1$, where $w\cdot\lambda = \wt(\D(\rect(\Label_D(U))))$ for $U$ the unique highest weight diagram on $\Koh$. In particular, the Kohnert crystal on $\KD(D)$ is a disjoint union of Demazure crystals.
  \label{thm:main}
\end{theorem}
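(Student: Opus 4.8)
The plan is to upgrade the injective crystal embedding of Theorem~\ref{thm:embed} to a bijection by pinning down its image and then proving surjectivity. Fix a component $\Koh$ with its unique highest weight diagram $U$ (Theorem~\ref{thm:hwt-Kohnert}); by Lemma~\ref{lem:hwt-rect}, $U$ rectifies to a composition diagram of partition weight $\lambda$, so $\rect(U)=u_\lambda$. Set $\comp{a}=\wt(\D(\rect(\Label_D(U))))$. By Lemma~\ref{lem:label-crystal} the diagram $\D(\rect(\Label_D(T)))$ is constant as $T$ ranges over $\Koh$, and by Theorem~\ref{thm:stable} it is the composition diagram $\D(\comp{a})$, with $\comp{a}$ sorting to $\lambda$. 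Composing the weight-preserving bijection $\psi\colon\SSKT(\comp{a})\to\KD(\D(\comp{a}))$ with $\varphi$ (Definition~\ref{def:embed-SSYT}) and invoking Theorem~\ref{thm:crystal-dem}, I identify $\varphi(\KD(\D(\comp{a})))$ with the Demazure crystal $\B_w(\lambda)$ sitting inside $\B(\lambda)$, where $w\cdot\lambda=\comp{a}$.

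First I would show $\rect(\Koh)\subseteq\B_w(\lambda)$. For $T\in\Koh\subseteq\KD(D)$, Theorem~\ref{thm:label-necessary} makes $\Label_D(T)$ well-defined and flagged, and since $D$ is southwest this labeling is proper and southwest; Theorem~\ref{thm:stable} then shows $\rect(\Label_D(T))$ is a Kohnert tableau of content $\comp{a}$. Hence the rectified diagram underlying $\rect(T)$ lies in $\KD(\D(\comp{a}))$, so $\rect(T)\in\varphi(\KD(\D(\comp{a})))=\B_w(\lambda)$. Combined with Theorem~\ref{thm:embed}, this gives a weight-preserving injection $\rect\colon\Koh\hookrightarrow\B_w(\lambda)$ intertwining $\Ke_i$ with $\e_i$ and sending $U$ to $u_\lambda$.

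The heart of the argument, and the step I expect to be the main obstacle, is surjectivity. Since $\B_w(\lambda)$ is a Demazure crystal, every element is obtained from $u_\lambda$ by a chain of lowering operators $b_j=\f_{i_j}(b_{j-1})$ remaining in $\B_w(\lambda)$. I would lift such a chain to $\Koh$ by induction, maintaining $T_j\in\Koh$ with $\rect(T_j)=b_j$ from $T_0=U$. Because the raw diagram lowering move commutes with rectification (a consequence of Theorem~\ref{thm:commute} via invertibility of the raising operators), the raw move $T'_j:=\Kf_{i_j}(T_{j-1})$ is nonzero and rectifies to $b_j$; the only thing to check is $T'_j\in\KD(D)$. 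By Theorem~\ref{thm:label-sufficient} this amounts to $\Label_D(T'_j)$ being well-defined and flagged. Well-definedness follows from Lemma~\ref{lem:label-crystal} applied to $\Ke_{i_j}(T'_j)=T_{j-1}$; for flaggedness I would argue that $\rect(\Label_D(T'_j))$, which by Theorem~\ref{thm:stable} already satisfies the descent and inversion conditions with the per-column content recorded by $\D(\comp{a})$ (Lemma~\ref{lem:label-crystal}), must coincide with the canonical flagged Kohnert labeling $\Label_{\comp{a}}$ of the rectified diagram $b_j\in\KD(\D(\comp{a}))$, whence Lemma~\ref{lem:flagged} returns flaggedness of $\Label_D(T'_j)$. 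This is exactly where the $\KD(D)$-membership truncation on the Kohnert side must be matched with the Demazure truncation in $\B_w(\lambda)$, and making the coincidence of labelings rigorous --- via uniqueness of labelings realizing the Kohnert-tableau conditions for fixed per-column content --- is the delicate part of the proof.

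Finally, the ``in particular'' clause is immediate: by Theorem~\ref{thm:closed} the raising operators partition $\KD(D)$ into the connected components $\Koh$, and the bijections just constructed exhibit each as a Demazure crystal, so $\KD(D)$ is a disjoint union of Demazure crystals. Taking characters and summing over components then yields the Demazure expansion of $\kohnert_D$ and resolves Conjecture~\ref{conj:demazure}.
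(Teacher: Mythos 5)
Your proposal is correct and takes essentially the same approach as the paper: containment via Theorems~\ref{thm:label-necessary}, \ref{thm:stable} and Lemma~\ref{lem:label-crystal}, then surjectivity by lifting through raw lowering moves and certifying $\KD(D)$-membership with Theorem~\ref{thm:label-sufficient} after transferring flaggedness from the canonical Kohnert labeling via Lemma~\ref{lem:flagged}. The only differences are organizational: you induct down the lowering chain from $u_\lambda$ whereas the paper raises $T\in\B_w(\lambda)$ to $\rect(U)$ and constructs the preimage $S$ in one pass via Lemma~\ref{lem:KD-pairing} and Theorem~\ref{thm:commute}, and the ``delicate'' coincidence of $\rect(\Label_D(T'_j))$ with $\Label_{\comp{a}}$ that you flag is precisely the step the paper asserts through Theorem~\ref{thm:stable}.
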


\begin{proof}
    By Theorem~\ref{thm:label-necessary}, $\Label_D$ is well-defined and flagged on each $T\in\Koh\subseteq\KD(D)$.  By Theorem~\ref{thm:stable}, $\rect(\Label_D(T))$ is a Kohnert diagram for each $T\in\Koh$, and by Lemma~\ref{lem:label-crystal}, $\D(\rect(\Label_D(T))) = \D(\rect(\Label_D(U)))$ for each $T\in\Koh$. Thus every connected component $\Koh$ of the Kohnert crystal $\KD(D)$ embeds under rectification into $\B_w(\lambda)$, where $w\cdot\lambda = \wt(\D(\rect(\Label_D(U))))$ for $U$ the unique highest weight diagram on $\Koh$.

  Suppose $T\in\B_w(\lambda)$. Then there exists some sequence of row indices $r_1,\ldots,r_m$ such that $\Ke_{r_m} \cdots \Ke_{r_1} (T) = \rect(U)$ is the highest weight diagram for $\B_w(\lambda)$. By Lemma~\ref{lem:KD-pairing}, we may construct a diagram $S$ such that $\Ke_{r_m} \cdots \Ke_{r_1} (S) = U$, and by Theorem~\ref{thm:commute}, $\rect(S)=T$. By Lemma~\ref{lem:label-crystal}, $\Label_D$ is well-defined on $S$ since, by Theorem~\ref{thm:label-necessary} $\Label_D$ is well-defined on $U$. By Theorem~\ref{thm:stable}, $\rect(\Label_D(S))$ is the proper labeling of $T$ with respect to $\D(\rect(\Label_D(U)))$. Since $T\in\B_w(\lambda)$, this is flagged, and so by Lemma~\ref{lem:flagged}, $\Label_D(S)$ is flagged. Therefore, by Theorem~\ref{thm:label-sufficient}, $S \in \KD(D)$, and so $\B_w(\lambda) = \Koh$.
\end{proof}

Taking characters, we have now proved Conjecture~\ref{conj:demazure}.

\begin{corollary}
  For $D$ a southwest diagram, the Kohnert polynomial $\kohnert_{D}$ is nonnegative sum of Demazure characters.
  \label{cor:main}
\end{corollary}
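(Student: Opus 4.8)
The plan is to read off Corollary~\ref{cor:main} directly from Theorem~\ref{thm:main}, since all of the substantive work has already been done. The starting point is Definition~\ref{def:kohnert_poly}, which expresses the Kohnert polynomial as the generating function
\begin{equation}
  \kohnert_D = \sum_{T \in \KD(D)} x_1^{\wt(T)_1} \cdots x_n^{\wt(T)_n}
\end{equation}
over the set $\KD(D)$. First I would partition this sum according to the connected components of the Kohnert crystal on $\KD(D)$. By Theorem~\ref{thm:closed}, for $D$ southwest the raising operators $\Ke_i$ do not leave $\KD(D)$, so the Kohnert crystal operators genuinely partition $\KD(D)$ into a disjoint union of connected components $\Koh$, and the generating function splits as a sum of the component characters $\mathrm{ch}(\Koh)$.

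Next I would identify each component character with a Demazure character. For a fixed component $\Koh$, Theorem~\ref{thm:main} provides a weight-preserving bijection $\rect\colon \Koh \stackrel{\sim}{\longrightarrow} \B_w(\lambda)$ intertwining the crystal operators, where $w \cdot \lambda = \wt(\D(\rect(\Label_D(U))))$ for $U$ the unique highest weight diagram of $\Koh$. Because $\rect$ preserves weight, the generating function of $\Koh$ equals the character of the target Demazure crystal:
\begin{equation}
  \mathrm{ch}(\Koh) = \mathrm{ch}(\B_w(\lambda)) = \key_{w \cdot \lambda},
\end{equation}
where the second equality is Kashiwara's theorem that the character of $\B_w(\lambda)$ is the Demazure character $\key_{w \cdot \lambda}$, quoted earlier in Section~\ref{sec:crystal-dem}. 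Summing over all components then yields $\kohnert_D = \sum_{\Koh} \key_{w(\Koh) \cdot \lambda(\Koh)}$, exhibiting $\kohnert_D$ as a sum of Demazure characters.

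Finally I would observe that the expansion is manifestly nonnegative: the coefficient of a given Demazure character $\key_{\comp{a}}$ is simply the number of connected components $\Koh$ of the Kohnert crystal on $\KD(D)$ whose associated composition $w(\Koh) \cdot \lambda(\Koh)$ equals $\comp{a}$, and this count is a nonnegative integer. There is essentially no obstacle remaining at this stage; the entire difficulty is absorbed into Theorem~\ref{thm:main} (and, through it, into Theorems~\ref{thm:closed}, \ref{thm:commute}, \ref{thm:label-necessary}, \ref{thm:label-sufficient}, and \ref{thm:stable}). The only point requiring care is to note explicitly that weight-preservation of $\rect$ is what licenses replacing the component's own generating function by the character of its image Demazure crystal, so that passing to characters in Theorem~\ref{thm:main} is legitimate. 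Given that, the corollary follows immediately.
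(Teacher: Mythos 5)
Your proposal is correct and is essentially the paper's own argument: the paper proves Corollary~\ref{cor:main} in one line by ``taking characters'' in Theorem~\ref{thm:main}, which is precisely the decomposition into connected components, weight-preserving bijection with $\B_w(\lambda)$, and appeal to Kashiwara's character theorem that you spell out. Your only addition is to make explicit the role of weight-preservation and of Theorem~\ref{thm:closed} in partitioning $\KD(D)$, which is a faithful unpacking rather than a different route.
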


Assaf and Searles \cite{AS19} suggest that the Kohnert polynomial of a southwest diagram $D$ is, in fact, the character of the Schur module for $D$ as defined by Reiner and Shimozono \cite{RS98}. This Demazure positivity result for southwest Kohnert polynomials lends further support to that conjecture.

\section{Explicit formulas}
%
\label{sec:applications}

We use labelings to give explicit formulas for Demazure expansions of southwest Kohnert polynomials with application motivated by Schubert polynomials.

\subsection{Yamanouchi diagrams}
\label{sec:yamanouchi}

By Theorem~\ref{thm:hwt-Kohnert}, each connected component of a southwest Kohnert crystal has a unique highest weight diagram. However, unlike in the tableau crystal setting, the highest weight does not determine the isomorphism class of the \emph{Demazure} crystal. For this, we require another concept.

\begin{definition}
  Given a Demazure crystal $\B_w(\lambda)$, an element $b \in \B_w(\lambda)$ is a \newword{Demazure lowest weight element} if $\wt(b) = w \cdot \lambda$.  
  \label{def:Dlwt}
\end{definition}

Each connected Demazure crystal $\B_w(\lambda)$ has a unique Demazure lowest weight element. Moreover, that element $b$ determines both the partition $\lambda = \mathrm{sort}(b)$ and the permutation $w$ which is the shortest (in Coxeter length) such that $w \cdot \lambda = \wt(b)$. Thus these elements are the correct analog of highest weight elements for tableaux crystals. However, while a Demazure lowest weight element $b$ necessarily satisfies $f_i(b)=0$ for all $i$, but this is not always sufficient to characterize $b$ since a Demazure crystal can have multiple lowest weights, as seen in Fig.~\ref{fig:demazure-crystal} with $\B_{312}(3,2,0)$. Thus we introduce a new concept specific to the Kohnert crystal to allow us to identity Demazure lowest weight elements in $\KD(D)$.


\begin{definition}
  A diagram $Y\in\KD(D)$ is \newword{Yamanouchi with respect to $D$} if $\rect(\Label_D(Y))$ is a super-standard composition diagram.
  \label{def:yamanouchi}
\end{definition}

By Theorem~\ref{thm:label-necessary}, $\Label_D$ is well-defined (and flagged) on $Y$, making this concept well-defined. For example, Fig.~\ref{fig:yamanouchi} shows the rectification of a Yamanouchi diagram, confirming correct use of the term, where steps have been consolidated to save space. 

\begin{figure}[ht]
  \begin{displaymath}
    \begin{array}{llllll}
      \cirtab{%
        & & 5 & 5 & 5\\
        4 & & 4 & 4 & 4 & & \leftball{red}{7} & \cball{red}{7} \\
        & \\
        2 & 2 & 2 & 7 & & 4 & \\
        & \\\hline } &
      \cirtab{%
        & & 5 & 5 & \cball{green}{5} \\
        4 & & 4 & 4 & 4 & \cball{green}{7} & \cball{green}{7} \\
        & \\
        2 & 2 & 2 & 7 & & 4 & \\
        & \\\hline } &
      \cirtab{%
        & & \leftball{red}{5} & \cball{red}{5} & \cball{red}{5} \\
        4 & & \leftball{red}{4} & \cball{red}{4} & \cball{red}{4} & \cball{red}{5} & \cball{red}{5} \\
        & \\
        2 & 2 & 2 & 7 & & \leftball{red}{4} & \\
        & \\\hline } &
      \cirtab{%
        & \leftball{red}{5} & \cball{red}{5} & \cball{red}{5} \\
        4 & 4 & 4 & 4 & 5 & 5 \\
        & \\
        2 & 2 & \cball{green}{2} & \cball{green}{7} & 4 \\
        & \\\hline }  &
      \cirtab{%
        5 & 5 & 5\\
        4 & 4 & 4 & \cball{green}{4} & \cball{green}{5} & \cball{green}{5} \\
        & \\
        2 & 2 & 2 & \cball{green}{2} & \cball{green}{4} \\
        & \\\hline } &
      \cirtab{%
        5 & 5 & 5\\
        4 & 4 & 4 & 4 & 4 & 4 \\
        & \\
        2 & 2 & 2 & 2 & 2 \\
        & \\\hline } 
    \end{array}
  \end{displaymath}
  \caption{\label{fig:yamanouchi}A Yamanouchi diagram (left) and its rectification (right).}
\end{figure}

\begin{theorem}
  For $D$ a southwest diagram and $Y\in\KD(D)$, $Y$ is Yamanouchi if and only if $Y$ is a Demazure lowest weight element. In particular, for $D$ southwest we have
  \begin{equation}
    \kohnert_D = \sum_{Y \in \Yam(D)} \key_{\wt(Y)} ,
    \label{e:kohnert-yam}
  \end{equation}
  where $\Yam(D) \subset \KD(D)$ is the set of Yamanouchi diagrams for $D$.
  \label{thm:yamanouchi}
\end{theorem}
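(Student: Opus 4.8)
The plan is to build on Theorem~\ref{thm:main}, which presents the Kohnert crystal on $\KD(D)$ as a disjoint union of its connected components $\Koh$, each carried by $\rect$ to a Demazure crystal $\B_w(\lambda)$ with $w\cdot\lambda = \wt(\D(\rect(\Label_D(U))))$, where $U$ is the unique highest weight diagram of $\Koh$. Because this map is weight-preserving and bijective, an element $Y\in\Koh$ is the Demazure lowest weight element precisely when $\wt(Y) = w\cdot\lambda$ (Definition~\ref{def:Dlwt}). Thus the equivalence reduces to showing, for each $Y$ lying in a component with associated composition $\comp{b} := w\cdot\lambda$, that $Y$ is Yamanouchi if and only if $\wt(Y)=\comp{b}$. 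I set $\Label := \rect(\Label_D(Y))$. By Theorem~\ref{thm:label-necessary}, $\Label_D$ is well-defined and flagged on $Y$, so $\Label$ is flagged by Lemma~\ref{lem:flagged} and is a Kohnert tableau by Theorem~\ref{thm:stable}. I record two facts about $\Label$: since rectification moves cells only within their rows, $\wt(\Label)=\wt(Y)$; and since $\Label$ is strict, the number of cells labeled $r$ equals the number of columns carrying the label $r$, so its content (the multiset of labels) satisfies $\mathrm{content}(\Label)=\wt(\D(\Label))=\comp{b}$, the last equality because $\D(\rect(\Label_D(\cdot)))$ is constant on the component with weight $\comp{b}$ (Theorem~\ref{thm:main}, via Lemma~\ref{lem:label-crystal}).

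The heart of the argument is a displacement count. Using the flagged inequality $\Label(x)\ge\mathrm{row}(x)$ at every cell $x$ and summing over all cells of $\Label$,
\[ \sum_{r\ge 1} r\,\comp{b}_r - \sum_{r\ge 1} r\,\wt(Y)_r = \sum_{x}\bigl(\Label(x)-\mathrm{row}(x)\bigr) \ge 0, \]
where the first equality uses $\mathrm{content}(\Label)=\comp{b}$ and $\wt(\Label)=\wt(Y)$, and the right-hand sum vanishes if and only if every cell of $\Label$ is labeled by its own row, that is, $\Label$ is super-standard. If $Y$ is Yamanouchi then $\Label$ is super-standard, so $\mathrm{content}(\Label)=\wt(\Label)$ and hence $\wt(Y)=\comp{b}$; conversely, if $\wt(Y)=\comp{b}$ the two weighted sums coincide, the displacement vanishes, and flaggedness forces $\Label$ to be super-standard. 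When $\Label$ is super-standard its underlying diagram agrees with $\D(\Label)$, a composition diagram, so $\Label$ is a super-standard composition diagram in the sense of Definition~\ref{def:yamanouchi}. This proves $Y$ Yamanouchi $\iff \wt(Y)=\comp{b} \iff Y$ a Demazure lowest weight element.

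For the expansion, $\kohnert_D$ is the character of the Kohnert crystal on $\KD(D)$, so $\kohnert_D=\sum_{\Koh}\mathrm{ch}(\Koh)$ over connected components. By Theorem~\ref{thm:main} and Kashiwara's Demazure character formula, $\mathrm{ch}(\Koh)=\mathrm{ch}(\B_w(\lambda))=\key_{w\cdot\lambda}$. Since each connected Demazure crystal has a unique Demazure lowest weight element, the equivalence just proved shows each component contains exactly one Yamanouchi diagram $Y_\Koh$, with $\wt(Y_\Koh)=w\cdot\lambda$, and every Yamanouchi diagram of $D$ arises this way. Reindexing the sum over components by these diagrams yields $\kohnert_D=\sum_{Y\in\Yam(D)}\key_{\wt(Y)}$.

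The step I expect to require the most care is the identification of the content of the rectified labeling $\Label$ with the component invariant $\comp{b}=w\cdot\lambda$: this hinges on combining the invariance of $\D(\rect(\Label_D(\cdot)))$ under the crystal operators (Lemma~\ref{lem:label-crystal}, as used in Theorem~\ref{thm:main}) with the strictness of Kohnert tableaux, which lets one pass freely between counting cells with a given label and counting columns carrying that label. Once content and weight are pinned down, the flagged displacement inequality makes the equivalence with super-standardness, and hence with the Demazure lowest weight condition, essentially automatic.
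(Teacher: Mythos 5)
Your proof is correct, and it takes a genuinely different route through the central equivalence. The paper proves existence and uniqueness of a Yamanouchi diagram on each component \emph{constructively}: starting from the unique highest weight diagram $U$ (Theorem~\ref{thm:hwt-Kohnert}), it rectifies, invokes the Demazure crystal structure on $\KD(\D(\comp{a}))$ (Theorem~\ref{thm:crystal-dem}) to write $\rect(U)$ as a sequence of raising operators applied to the composition diagram $\D(\comp{a})$, pulls that sequence back through Theorem~\ref{thm:commute} and Lemma~\ref{lem:KD-pairing} to produce a candidate $Y$ with $\rect(Y)=\D(\comp{a})$, and verifies $Y\in\KD(D)$ via Theorem~\ref{thm:label-sufficient}, with uniqueness coming from injectivity of rectification on components. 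You instead prove the intrinsic characterization that $Y$ is Yamanouchi if and only if $\wt(Y)=\comp{b}:=w\cdot\lambda$, by a displacement count: flaggedness of the rectified labeling (Theorem~\ref{thm:label-necessary} plus Lemma~\ref{lem:flagged}) gives $\sum_x\bigl(\Label(x)-\mathrm{row}(x)\bigr)\geq 0$; strictness of Kohnert tableaux converts the content of $\rect(\Label_D(Y))$ into $\wt(\D(\rect(\Label_D(Y))))$, which Lemma~\ref{lem:label-crystal} pins to the component invariant $\comp{b}$; and the sum vanishes exactly when the labeling is super-standard, at which point Theorem~\ref{thm:stable} guarantees the underlying rectified diagram equals $\D(\rect(\Label_D(Y)))$ and hence is a composition diagram, completing Definition~\ref{def:yamanouchi}. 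Existence and uniqueness then fall out of the bijectivity in Theorem~\ref{thm:main} together with the uniqueness of the Demazure lowest weight element, with no element to construct. Your route buys a transparent proof of the stated ``if and only if,'' which the paper establishes only implicitly through $\rect(Y)=\D(\comp{a})$, and it pleasantly recycles the label-displacement statistic $\delta_D$ that the paper uses only inside the proof of Theorem~\ref{thm:label-sufficient}; the paper's route buys an explicit procedure for locating the Yamanouchi diagram from the highest weight element. The two elisions you flag yourself are the only points needing care, and both are sound: the transfer of Definition~\ref{def:Dlwt} across the weight-preserving bijection $\rect$, and the equality $\mathrm{content}(\rect(\Label_D(Y)))=\wt(\D(\rect(\Label_D(Y))))$, which indeed requires exactly the one-label-$r$-per-column strictness you cite.
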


\begin{proof}
  By Theorem~\ref{thm:embed}, the result follows if there exists a unique Yamanouchi diagram $Y$ on each connected component of the Demazure crystal. If a Yamanouchi $Y$ exists, then it is unique by Theorems~\ref{thm:commute} and~\ref{thm:stable} using the injectivity of rectification on crystal components. By Theorem~\ref{thm:hwt-Kohnert}, we may set $U$ to be the unique highest weight element on the component. By Theorem~\ref{thm:label-necessary}, $\Label_D$ is well-defined and flagged on $U$ since $U\in\KD(D)$ by Theorem~\ref{thm:hwt-Kohnert}. By Lemma~\ref{lem:label-crystal}, the proper labeling $\Label_D$ on $U$ becomes a proper labeling $\Label_{\comp{a}}$ on $\rect(U)$ for some weak composition $\comp{a}$. By Theorem~\ref{thm:crystal-dem}, $\rect(U) = \Ke_{s_n} \cdots \Ke_{s_1}(\D(\comp{a}))$ for some sequence of row indices. By Lemma~\ref{lem:KD-pairing} there exists a unique diagram $Y$ such that $\Ke_{i_m} \cdots \Ke_{i_1}(Y) = U$ and, by Theorem~\ref{thm:commute}, we have $\rect(Y) = \D(\comp{a})$. Since $\Label_D$ is well-defined on $U$, by Lemma~\ref{lem:label-crystal} $\Label_D$ is well-defined on $Y$ and rectifies to the super-standard labeling of $\D(\comp{a})$ and as such is flagged. Thus by Theorem~\ref{thm:stable}, $\Label_D(Y)$ is also flagged, and so by Theorem~\ref{thm:label-sufficient} $Y\in\KD(D)$ and so is Yamanouchi. 
\end{proof}

\begin{figure}[ht]
  \begin{displaymath}
    \arraycolsep=2\cellsize
    \begin{array}{ccccc}
      \cirtab{ & & 6 & & & 6 \\ & & 5 \\ \\ 3 & & 3 & 3 &     \\ 2 &         \\ & \\\hline} &
      \cirtab{ & & 6 & & & 6 \\ & &   \\ \\ 3 & & 3 & 3 &     \\ 2 & & 5     \\ & \\\hline} &
      \cirtab{ & & 6 & & &   \\ & &   \\ \\ 3 & & 5 &   & & 6 \\ 2 & & 3 & 3 \\ & \\\hline} &
      \cirtab{ & & 6 & & &   \\ & & 5 \\ \\ 3 & & 3 & 3 & & 6 \\ 2 &         \\ & \\\hline} &
      \cirtab{ & & 6 & & &   \\ & &   \\ \\ 3 & & 3 & 3 & & 6 \\ 2 & & 5     \\ & \\\hline} 
    \end{array}
  \end{displaymath}
   \caption{\label{fig:Dlwt}The Yamanouchi diagrams for the leftmost diagram above.}
\end{figure}

For example, taking $D$ to be the leftmost diagram in Fig.~\ref{fig:Dlwt}, which coincides with the Rothe diagram for the permutation $13625847$, the Yamanouchi diagrams are those shown in the figure. Taking weights, we obtain the expansion
\[ \kohnert_D = \key_{(0,1,3,0,1,2)} + \key_{(0,2,3,0,0,2)} + \key_{(0,3,3,0,0,1)} + \key_{(0,1,4,0,1,1)} + \key_{(0,2,4,0,0,1)} . \]

Looking back at Theorem~\ref{thm:LS}, to compute the Demazure expansion of a Schubert polynomial from the increasing reduced word paradigm as in \eqref{e:nilkey}, one must generate all increasing reduced words, which, in practice, requires computing the entire set of reduced words for $w$. In contrast, to compute the Demazure expansion \eqref{e:kohnert-yam} from the Yamanouchi diagram paradigm, we simply apply Kohnert moves to $\D(w)$ without creating new rows and filter the results based on the Yamanouchi condition.

For example, we computed $\schubert_{13625847}$ using words in Fig.~\ref{fig:yam} which were found by searching all reduced words for $13625847$ for the increasing ones, then lifting the result. The Yamanouchi diagrams in Fig.~\ref{fig:Dlwt} were computed more concisely using Kohnert moves on the Rothe diagram for $13625847$, circumventing the computational overhead of computing all reduced words. Even for the permutation $13625847$, the computational savings is significant and makes the diagram paradigm for Demazure expansions far more tractable.

\subsection{Quasi-Yamanouchi diagrams}
\label{sec:qyam}

In addition to the Demazure expansion, another interesting basis into which southwest Kohnert polynomials expand nonnegatively is the fundamental slide basis. Assaf and Searles \cite[Definition~3.6]{AS17} introduced fundamental slide polynomials as a generalization of the fundamental quasisymmetric functions of Gessel \cite{Ges84} that form a basis for the full polynomial ring. 

\begin{definition}[\cite{AS17}]
  The \newword{fundamental slide polynomial} $\fund_{\comp{a}}$ is
  \begin{equation}
    \fund_{\comp{a}} = \sum_{\substack{b_1 + \cdots + b_k \geq a_1 + \cdots + a_k \ \forall k \\ \mathrm{flat}(b) \ \mathrm{refines} \ \mathrm{flat}(a)}} x_1^{b_1} \cdots x_n^{b_n},
    \label{e:fund-shift}
  \end{equation}
  where $\mathrm{flat}(\comp{a})$ denotes the composition obtained by removing all zero parts.
  \label{def:fund-shift}
\end{definition}

Assaf and Searles \cite[Definition~4.8]{AS19} describe the subset of Kohnert diagrams giving rise to the fundamental slide expansion of the Kohnert polynomial. 

\begin{definition}[\cite{AS19}]
  A diagram $T \in \KD(D)$ is \newword{quasi-Yamanouchi} if for every row $r$, either some cell in row $r+1$ lies weakly right of some cell in row $r$ or raising all cells in row $r$ up to row $r+1$ is not a Kohnert diagram for $D$.
  \label{def:FKD}
\end{definition}

Assaf and Searles \cite[Definition~4.11]{AS19} characterize diagrams for which the Kohnert polynomial expands nonnegatively into fundamental slide polynomials. Southwest diagrams easily satisfy the condition, and so \cite[Theorem~4.14]{AS19} gives the following.

\begin{theorem}[\cite{AS19}]
  For $D$ southwest, we have
  \begin{equation}
    \kohnert_D = \sum_{T \in \QYKD(D)} \fund_{\wt(T)},
    \label{e:FKD}
  \end{equation}
  where $\QYKD(D) \subset \KD(D)$ is the set of quasi-Yamanouchi Kohnert diagrams.
  \label{thm:FKD}
\end{theorem}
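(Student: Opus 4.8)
The plan is to deduce \eqref{e:FKD} from the general fundamental-slide-positivity criterion of Assaf and Searles \cite[Definition~4.11, Theorem~4.14]{AS19} by checking that every southwest diagram satisfies their hypothesis. Recall $\kohnert_D = \sum_{T\in\KD(D)} x^{\wt(T)}$, so the statement is an assertion about regrouping these monomials: the diagrams of $\KD(D)$ must split into blocks, one per quasi-Yamanouchi diagram, with each block summing to a single fundamental slide polynomial. The criterion of \cite{AS19} is a closure-type condition on $D$ that guarantees this regrouping is clean, and the southwest condition of Definition~\ref{def:diagram-sw}, which forces the southwest corner of any strictly-northwest pair of cells to be occupied, is exactly such a closure property. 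The verification that southwest implies their criterion is therefore routine, and \eqref{e:FKD} follows directly from \cite[Theorem~4.14]{AS19}.

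To explain why the expansion takes the stated form, I would make the regrouping explicit by a destandardization map onto $\QYKD(D)$. Given $T\in\KD(D)$ that is not quasi-Yamanouchi, Definition~\ref{def:FKD} supplies a (nonempty) row $r$ all of whose cells lie strictly right of every cell of row $r+1$ and whose block-raise into row $r+1$ remains in $\KD(D)$; performing that raise and iterating terminates, since each such raise strictly increases the height statistic $\sum_i i\cdot\wt(T)_i$, which is bounded on the finite set $\KD(D)$. After checking well-definedness of the resulting quasi-Yamanouchi diagram $Q(T)$ (via a local confluence argument on commuting raises together with termination), the fibers $Q^{-1}(Q)$ for $Q\in\QYKD(D)$ partition $\KD(D)$, and the theorem reduces to the fiber identity $\sum_{S\in Q^{-1}(Q)} x^{\wt(S)} = \fund_{\wt(Q)}$.

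The crux, and the step I expect to be the main obstacle, is this fiber identity. Reading Definition~\ref{def:fund-shift}, the monomials $x^{\comp{b}}$ of $\fund_{\wt(Q)}$ are precisely those whose partial sums dominate those of $\wt(Q)$ with $\mathrm{flat}(\comp{b})$ refining $\mathrm{flat}(\wt(Q))$, and geometrically these are exactly the weights obtained by sliding blocks of cells of $Q$ downward into lower rows. I would show that inverting the destandardization raises realizes precisely these admissible down-slides and no others, and here the southwest hypothesis is used in both directions: via Theorems~\ref{thm:label-necessary} and~\ref{thm:label-sufficient} it certifies that each candidate down-slid diagram genuinely lies in $\KD(D)$, so that no monomial of $\fund_{\wt(Q)}$ is omitted, and it rules out any extraneous diagram entering the fiber, so that there is no over-counting. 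Summing the fiber identity over $Q\in\QYKD(D)$ then yields \eqref{e:FKD}.
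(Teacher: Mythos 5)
Your first paragraph is exactly the paper's proof: Theorem~\ref{thm:FKD} is quoted from \cite{AS19}, and the paper's entire argument consists of the observation that southwest diagrams satisfy the positivity criterion of \cite[Definition~4.11]{AS19}, whence \cite[Theorem~4.14]{AS19} applies directly. The destandardization and fiber-identity sketch in your later paragraphs is a plausible outline of how \cite{AS19} establishes that cited theorem, but it is surplus here, since the paper does not reprove it.
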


Using Theorems~\ref{thm:label-necessary} and \ref{thm:label-sufficient}, we can give a more direct characterization of quasi-Yamanouchi diagrams, and so a more direct formula for the fundamental slide expansion of Kohnert polynomials.

\begin{proposition}
  Given a diagram $T$ and a southwest diagram $D$, we have $T\in\QYKD(D)$ if and only if $\Label_D$ is well-defined and flagged on $T$ and for every row $r$ in which no cell in row $r+1$ lies weakly right of the leftmost cell $x$ in row $r$, we have $\Label_D(x)=r$. 
  \label{prop:qyam}
\end{proposition}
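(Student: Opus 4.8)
The plan is to layer the claimed labeling condition onto Definition~\ref{def:FKD} using the membership criterion furnished by Theorems~\ref{thm:label-necessary} and~\ref{thm:label-sufficient}. Two reductions come first. Since $\QYKD(D)\subseteq\KD(D)$ and, for $D$ southwest, $T\in\KD(D)$ holds if and only if $\Label_D$ is well-defined and flagged on $T$, the first clause of the proposition is literally the requirement $T\in\KD(D)$. Second, a cell of row $r+1$ lies weakly right of some cell of row $r$ exactly when it lies weakly right of the leftmost such cell $x$, because $x$ occupies the smallest column appearing in row $r$; thus the first disjunct of Definition~\ref{def:FKD} may be rephrased in terms of $x$ alone. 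After these reductions, and noting that a row $r$ satisfying this disjunct is constrained by neither condition, the proposition is equivalent to the following assertion: for $T\in\KD(D)$ and a nonempty row $r$ whose leftmost cell $x$ (in column $c_0$) has no cell of row $r+1$ weakly to its right, the diagram $T^{(r)}$ obtained by raising every cell of row $r$ into row $r+1$ fails to lie in $\KD(D)$ if and only if $\Label_D(x)=r$.

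Next I would fix the geometry. The hypothesis that no cell of row $r+1$ sits weakly right of $x$ forces every cell of row $r+1$ into a column strictly below $c_0$ and (since $x$ is leftmost) every cell of row $r$ into a column weakly above $c_0$; hence raising row $r$ creates no column collisions and changes $T$ only in the columns weakly right of $c_0$, yielding a genuine diagram $T^{(r)}$. Applying the membership criterion once more, the crux becomes: $\Label_D$ fails to be well-defined or flagged on $T^{(r)}$ precisely when $\Label_D(x)=r$. I would attack this by running the labeling algorithm of Definition~\ref{def:labeling} (which proceeds column by column from right to left), using that for the southwest proper labeling $\Label_D$ the label pairing coincides with column pairing by Lemma~\ref{lem:same-move}, and that $\rect(\Label_D(T))$ is a Kohnert tableau by Theorem~\ref{thm:stable}.

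For the two directions I would argue as follows. If $\Label_D(x)=r$, then in $T^{(r)}$ the cell $x$ lies in row $r+1$ while label $r$ remains the label forced onto it by the algorithm in column $c_0$; this produces a cell in row $r+1$ with label $r$, violating the flagged condition of Definition~\ref{def:flagged}, so $T^{(r)}\notin\KD(D)$. Conversely, if $\Label_D(x)>r$, I would show that each cell of row $r$ carries a label exceeding $r$, so that relocating these cells one row up preserves flaggedness; combined with well-definedness (tracked through the right-to-left algorithm exactly as in the proof of Theorem~\ref{thm:label-sufficient}), this gives $\Label_D$ well-defined and flagged on $T^{(r)}$, whence $T^{(r)}\in\KD(D)$. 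Here Lemma~\ref{lem:flagged} and Definition~\ref{def:kohnert-tableaux}\ref{i:descend} would be used to compare labels across row $r$ and conclude that $x$ is the binding cell.

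The step I expect to be the main obstacle is precisely this last comparison: proving that, under the standing hypothesis that row $r+1$ is empty weakly right of $c_0$, the leftmost cell $x$ carries the minimal label occurring in row $r$, so that $\Label_D(x)>r$ propagates to the entire row. The subtlety is that the algorithm's decision in column $c_0$ depends on $\rect(T^{(r)}_{>c_0})$, which differs from $\rect(T_{>c_0})$ exactly by the raised cells, and that labels are themselves altered by the relabeling of rectification; the delicate point is to verify that these relocations neither change the label forced onto $x$ nor allow a smaller label to appear further right in row $r$. I anticipate resolving this with the same label-pairing bookkeeping and southwest comparisons that drive Lemma~\ref{lem:label-crystal} and the proof of Theorem~\ref{thm:label-sufficient}, the emptiness of row $r+1$ to the right of $c_0$ being exactly the feature that rules out a hidden label-$r$ cell to the right of $x$.
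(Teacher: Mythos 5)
Your reductions are correct and coincide with how the paper begins: Theorems~\ref{thm:label-necessary} and~\ref{thm:label-sufficient} turn membership in $\KD(D)$ into the well-defined-and-flagged criterion, the first disjunct of Definition~\ref{def:FKD} is rightly rephrased through the leftmost cell $x$, and your raised diagram $T^{(r)}$ is indeed a genuine diagram. But the heart of the proposition is exactly the step you defer, and deferring it leaves a genuine gap in both directions. Even your direction ``$\Label_D(x)=r$ implies $T^{(r)}\notin\KD(D)$'' rests on the unproven assertion that the labeling algorithm still assigns $r$ to $x$ in column $c_0$ of $T^{(r)}$; this half can be rescued, since the assignment in column $c_0$ depends only on $\rect(T_{>c_0})$, where row $r+1$ is empty, so raising row $r$ there is an order-preserving shift that the algorithm cannot distinguish --- but you do not carry out this bookkeeping. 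The harder direction ``$\Label_D(x)>r$ implies $T^{(r)}\in\KD(D)$'' is where the plan stalls outright: well-definedness and flaggedness of $\Label_D$ on $T^{(r)}$ must also be verified in the columns strictly left of $c_0$, where the raised cells, now in row $r+1$, interact during rectification with any original row-$(r+1)$ cells of $T$ lying strictly left of $c_0$; there the labelings of $T$ and $T^{(r)}$ genuinely differ and no shift-commutation argument is available, as you yourself observe.

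The paper's proof avoids this direct comparison altogether by arguing in the rectified world, using structural facts your proposal does not supply. First, when $\Label_D(x)=r$, flaggedness forces every cell above row $r$ to carry a label exceeding $r$, so $x$ never label-pairs during rectification and lands in column $1$ of $\rect(T)$ still carrying label $r$. Second, the southwest condition on $D$ then forces row $r+1$ of $T$ to be \emph{entirely} empty: a cell of row $r+1$ strictly left of $x$ would force a label-$r$ cell in its column, which by Definition~\ref{def:kohnert-tableaux}\ref{i:descend} must sit weakly above row $r$, contradicting either flaggedness or the leftmostness of $x$. Third, with row $r+1$ empty, the failure of flaggedness after raising row $r$ is read off in $\rect(T)$, where $\rect(\Label_D)(x)=r<r+1$, and transported back to $\Kf_r^*(T)$ via Theorem~\ref{thm:stable} and Lemma~\ref{lem:flagged}; the other implication is likewise handled by tracking the label of $x$ into $\rect(T)$ and deducing $\rect(\Label_D)(x)<r+1$ from $\Kf_r^*(T)\notin\KD(D)$, rather than by relabeling $T^{(r)}$ from scratch. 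Unless you prove analogues of these facts --- or establish a genuine commutation of the labeling algorithm with raising the whole row across \emph{every} column, not just those weakly right of $c_0$ --- the step you single out as the main obstacle is not a technicality but the entire content of the proposition.
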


\begin{proof}
  If $T\in\QYKD(D)\subset\KD(D)$, then $\Label_D$ is well-defined and flagged for $T$ by Theorem~\ref{thm:label-necessary}. If no cell in row $r+1$ lies weakly right of the leftmost cell $x$ in row $r$, then $\Kf_r^*(T)$ raises all cells in row $r$ up to row $r+1$, and since $T\in\QYKD(D)$, we must have $\Kf_r^*(T)=0$, in which case $\Kf_r(T)=0$ as well. Tracking the label of $x$, we have $\Label_D(x) \geq r$ in $T$, but we must have $\rect(\Label_D)(x) < r+1$ in $\rect(T)$, and so $\Label_D(x)=r$ as claimed.

  Conversely, if $\Label_D$ is well-defined and flagged on $T$, then by Theorem~\ref{thm:label-sufficient} $T\in\KD(D)$. Let $x$ be the leftmost cell in row $r$, and suppose there is no cell weakly right of this in row $r+1$. By the conditions on $T$, we must have $\Label_D(x) = r$. Since $\Label_D$ is flagged, any cell above row $r$ must have label greater than $r$, and so $x$ will never label pair with a cell above it during rectification. Thus $x$ lies in column $1$ of $\rect(T)$ and has label $r$. Moreover, by the southwest condition on $D$, any cell in row $r+1$ left of $x$ in $T$ forces another cell with label $r$ in the same column, which by the labeling algorithm must lie weakly above row $r$, contradicting either the flagged condition or the choice of $x$ as the leftmost cell of row $r$. In particular, row $r+1$ is empty, both in $T$ and in $\rect(T)$. Since $\rect(\Label_D)(x)=r$, $\Kf_r^*(\rect(T))$ is not flagged, and so $\Kf_r^*(T)$ is not flagged by Theorem~\ref{thm:stable}. Thus by Theorem~\ref{thm:label-necessary}, $\Kf_r^*(T) \not\in\KD(D)$, showing $T\in\QYKD(D)$.
\end{proof}

\begin{corollary}
  For $D$ southwest, we have $D \in \Yam(D) \subset \QYKD(D) \subset \KD(D)$, reflecting the nonnegative expansions
  \begin{equation}
    \kohnert_D = \sum_{T \in \Yam(D)} \key_{\wt(T)} = \sum_{T \in \QYKD(D)} \fund_{\wt(T)} = \sum_{T\in\KD(D)} x_1^{\wt(T)_1} \cdots x_n^{\wt(T)_n} .
  \end{equation}
  \label{cor:qyam}
\end{corollary}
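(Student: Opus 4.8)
The plan is to separate this corollary into its combinatorial skeleton, the containment chain $D \in \Yam(D) \subseteq \QYKD(D) \subseteq \KD(D)$, and its three expansions, since the latter require no new argument. The first equality is exactly Theorem~\ref{thm:yamanouchi}, the second is Theorem~\ref{thm:FKD}, and the third is the defining expansion from Definition~\ref{def:kohnert_poly}. Moreover $\Yam(D) \subseteq \KD(D)$ and $\QYKD(D) \subseteq \KD(D)$ hold by Definitions~\ref{def:yamanouchi} and~\ref{def:FKD}. Thus all genuine content is the membership $D \in \Yam(D)$ and the containment $\Yam(D) \subseteq \QYKD(D)$, which I would prove separately.

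For $D \in \Yam(D)$, I would compute $\rect(\Label_D(D))$ directly. As observed in the proof of Theorem~\ref{thm:label-necessary}, $\Label_D(D)$ is the super-standard labeling, placing label $r$ in every cell of row $r$. Since rectification never alters the row of a cell, it is enough to check that the re-labeling of Definition~\ref{def:re-label} leaves a super-standard, southwest labeling unchanged. This is a short check: a cell in row $\rho$ label pairs only with a cell weakly above it of label at most $\rho$, which by flaggedness must itself lie in row $\rho$ with label $\rho$, so the second step of Definition~\ref{def:re-label} preserves $\rho$; and the swap in the first step cannot trigger, since it would require a paired cell $z$ strictly above an unpaired cell $x$ with $\Label(y) \le \Label'(x) < \Label'(z)$, whereas for super-standard labels this forces the contradictory $\mathrm{row}(z) \le \mathrm{row}(x) < \mathrm{row}(z)$. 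Hence $\rect(\Label_D(D))$ is again super-standard, and by Proposition~\ref{prop:sw} its underlying diagram $\rect(D)$ is a composition diagram, so $\rect(\Label_D(D))$ is a super-standard composition diagram and $D \in \Yam(D)$.

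For $\Yam(D) \subseteq \QYKD(D)$, I would invoke Proposition~\ref{prop:qyam}. Fix $Y \in \Yam(D)$; by Theorem~\ref{thm:label-necessary} the labeling $\Label_D$ is well-defined and flagged on $Y$, so it remains only to verify that whenever a row $r$ has no cell in row $r+1$ weakly right of the leftmost cell $x$ of row $r$, we have $\Label_D(x) = r$. The crucial observation is that $x$ remains the leftmost cell of row $r$ throughout rectification: cells never leave their rows, and any cell of row $r$ lying right of $x$ becomes column paired (hence halted) with $x$ the moment it reaches the column adjacent to $x$, so it can never overtake $x$. Consequently $x$ never acquires a neighbor immediately to its left in row $r$. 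I would then track the label of $x$: flaggedness is preserved by Lemma~\ref{lem:flagged}, so $\Label(x) \ge r$ at every stage, while the only re-labeling step that can strictly lower $\Label(x)$ is the label-pairing reset of Definition~\ref{def:re-label}, which resets $\Label(x)$ to the label of its pair $y$; for that label to equal $r$, flaggedness would force $y$ into row $r$ immediately left of $x$, which is impossible. Hence if $\Label_D(x) > r$ then the label of $x$ stays strictly above $r$ for the entire rectification, contradicting that $\rect(\Label_D(Y))$ is super-standard, whence the rectified label of $x$, still in row $r$, must equal $r$. Therefore $\Label_D(x) = r$, and Proposition~\ref{prop:qyam} yields $Y \in \QYKD(D)$.

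The main obstacle is this second containment, and specifically the bookkeeping of the re-labeling procedure: one must confirm that for the leftmost cell of an isolated row, neither the pairing reset nor the swap step can ever drive its label down to $r$ from above. The two invariants that make this work, namely that $x$ stays leftmost in its row and that flaggedness is maintained, are precisely the ingredients isolated in Lemmas~\ref{lem:same-move} and~\ref{lem:flagged}, so I expect the argument to be short once those are in hand. The risk I would watch for is an indirect reassignment of a smaller label to $x$ through the swap step, which the leftmost-in-row property is exactly what rules out.
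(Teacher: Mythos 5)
Your proposal is correct and follows essentially the route the paper intends: the corollary carries no proof of its own, being an assembly of Theorem~\ref{thm:yamanouchi}, Theorem~\ref{thm:FKD}, and Definition~\ref{def:kohnert_poly}, with the membership $D\in\Yam(D)$ justified by the same invariance-under-relabeling computation for the super-standard labeling that the paper records in the opening paragraph of the proof of Theorem~\ref{thm:vexillary}, and the containment $\Yam(D)\subseteq\QYKD(D)$ left implicit via Proposition~\ref{prop:qyam}. Your explicit tracking of the leftmost cell's label through Definition~\ref{def:re-label} soundly fills in that implicit step; the only cosmetic slip is attributional, since the fact that a cell can never rectify past a same-row cell follows directly from the first bullet of the column pairing rule in Definition~\ref{def:KD-cpair} rather than from Lemma~\ref{lem:same-move}.
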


\subsection{Vexillary diagrams}
\label{sec:vexillary}

Lascoux and Sch{\"u}tzenberger define a class of permutations they call \emph{vexillary}, and Macdonald \cite[(1.27)]{Mac91} gives various equivalent characterizations of this concept, some of which make use of the Rothe diagram of a permutation.

\begin{proposition}[\cite{Mac91}]
  The following are equivalent for a permutation $w$:
  \begin{enumerate}
  \item the set of rows of $\D(w)$ is totally ordered by inclusion;
  \item the set of columns of $\D(w)$ is totally ordered by inclusion;
  \item there do not exist $1 \leq a < b < c < d$ such that $w_b < w_a < w_d < w_c$.
  \end{enumerate}
  When any of these holds for $w$, we say $w$ is a \newword{vexillary} permutation.
  \label{prop:vexillary}
\end{proposition}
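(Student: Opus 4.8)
The plan is to prove the three-way equivalence by reducing (2) to (1) via the inverse permutation, and then establishing the single equivalence (1)$\Leftrightarrow$(3) directly by extracting a pattern from a pair of incomparable rows (and conversely). Throughout I use the standard reading of the Rothe diagram in which the cells occupying row $i$ lie in the columns $R_i := \{ w_j \mid j>i,\ w_j < w_i\}$, as in the description accompanying \eqref{e:rothe}.

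First I would record two symmetry facts that handle the passage between rows and columns. Since a cell lies in $\D(w)$ exactly when it records an inversion of $w$, transposing the plane interchanges the roles of positions and values and shows that $\D(w^{-1})$ is the transpose of $\D(w)$; consequently the rows of $\D(w)$ are totally ordered by inclusion if and only if the columns of $\D(w^{-1})$ are. Moreover the permutation $2143$ is its own inverse, so $w$ contains a $2143$ pattern if and only if $w^{-1}$ does. Granting the equivalence (1)$\Leftrightarrow$(3) for every permutation, applying it to $w^{-1}$ and composing with these two facts gives (2)$\Leftrightarrow$(3). Thus it suffices to prove (1)$\Leftrightarrow$(3).

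To prove (1)$\Leftrightarrow$(3) I would show the equivalent statement that $\D(w)$ has two incomparable rows if and only if $w$ contains a $2143$ pattern. For one direction, given $a<b<c<d$ with $w_b < w_a < w_d < w_c$, the memberships $w_b \in R_a$, $w_b \notin R_c$ (as $b<c$), $w_d \in R_c$, and $w_d \notin R_a$ (as $w_d > w_a$) exhibit $R_a$ and $R_c$ as incomparable. For the converse, suppose $R_i, R_{i'}$ with $i<i'$ are incomparable, and pick $u \in R_i \setminus R_{i'}$ and $v \in R_{i'}\setminus R_i$. Unwinding the membership conditions forces $w_i < v < w_{i'}$ (hence $w_i<w_{i'}$), pins the position $p$ of $u$ to satisfy $i<p<i'$, and places the position $q$ of $v$ at $q>i'$; then the four positions $i<p<i'<q$ carry the values $w_p < w_i < w_q < w_{i'}$, which is exactly a $2143$ occurrence.

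I expect the bookkeeping in this converse direction to be the only delicate point: one must carefully translate the negations ``$u\notin R_{i'}$'' and ``$v\notin R_i$'' into inequalities among the four values, deduce $u<w_{i'}$ from $u<w_i<w_{i'}$ in order to conclude $p\le i'$, and rule out the borderline case $p=i'$ (which would force $u=w_{i'}$, contradicting $u<w_{i'}$). Everything else follows immediately from the definition of $R_i$, and the inverse/transpose reduction in the first step prevents the argument from doubling in length, since the column statement is obtained for free once the row statement is in hand.
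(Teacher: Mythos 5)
Your proposal is correct, and there is nothing in the paper to compare it against: the paper states this proposition as a quoted result, citing Macdonald \cite[(1.27)]{Mac91}, and supplies no proof of its own. Your argument is a complete and standard derivation. The reduction of (2) to (1) is sound: from the characterization $\D(w) = \{(i,j) : w_i > j \text{ and } w^{-1}_j > i\}$ one sees immediately that $\D(w^{-1})$ is the transpose of $\D(w)$, and since $2143$ is an involution, $w$ contains $2143$ if and only if $w^{-1}$ does, so (2)$\Leftrightarrow$(3) follows from (1)$\Leftrightarrow$(3) applied to $w^{-1}$. In the forward extraction, the four memberships you list ($w_b \in R_a$, $w_b \notin R_c$ since the value $w_b$ sits at position $b < c$, $w_d \in R_c$, $w_d \notin R_a$ since $w_d > w_a$) do exhibit $R_a, R_c$ as incomparable, and both rows are nonempty, which is what incomparability requires. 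In the converse, taking $i < i'$ (harmless, as incomparability is symmetric), the negation $v \notin R_i$ combined with $q > i' > i$ forces $v > w_i$, giving $w_i < v < w_{i'}$; then $u < w_i < w_{i'}$ together with $u \notin R_{i'}$ forces $p \leq i'$, and the borderline $p = i'$ would give $u = w_{i'}$, contradicting $u < w_{i'}$ exactly as you say. The resulting positions $i < p < i' < q$ with values $w_p < w_i < w_q < w_{i'}$ are a $2143$ occurrence. The one bookkeeping point worth making explicit in a write-up is that $u \in R_{i'}$ unpacks to the conjunction ``$\mathrm{pos}(u) > i'$ and $u < w_{i'}$,'' so its negation only yields $p \leq i'$ after you have already established $u < w_{i'}$; your proposal handles this in the right order.
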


Lascoux and Sch{\"u}tzenberger \cite{LS90} show the Schubert polynomial of a vexillary permutation is a Demazure character. To state the result precisely, the \newword{Lehmer code} of a permutation $w$, denoted by $\Le(w)$, is the weak composition whose $i$th part is the number of indices $j>i$ for which $w_i > w_j$.

\begin{theorem}[\cite{LS90}]
  The Schubert polynomial $\schubert_w$ is equal to a single Demazure character if and only if $w$ is vexillary, and in this case we have $\schubert_w = \key_{\Le(w)}$.
  \label{thm:LS-vexillary}
\end{theorem}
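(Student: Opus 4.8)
The plan is to push everything through the Yamanouchi expansion \eqref{e:kohnert-yam}. By Theorem~\ref{thm:kohnertrule} we have $\schubert_w = \kohnert_{\D(w)}$, and since the Rothe diagram $\D(w)$ is southwest, Theorem~\ref{thm:yamanouchi} gives $\schubert_w = \sum_{Y \in \Yam(\D(w))} \key_{\wt(Y)}$. Counting cells row by row shows $\wt(\D(w))_i = \#\{\, j > i : w_j < w_i \,\} = \Le(w)_i$, so $\wt(\D(w)) = \Le(w)$; and by Corollary~\ref{cor:qyam} the diagram $\D(w)$ itself always lies in $\Yam(\D(w))$, so the term $\key_{\Le(w)}$ always occurs. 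Because Demazure characters are linearly independent, $\schubert_w$ equals a single Demazure character precisely when this sum has exactly one term, in which case that term is forced to be $\key_{\Le(w)}$. Thus the theorem is equivalent to the assertion that $\Yam(\D(w)) = \{\D(w)\}$ if and only if $w$ is vexillary, equivalently that the Kohnert crystal on $\KD(\D(w))$ is connected if and only if $w$ is vexillary.

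For the implication from vexillarity I would invoke Proposition~\ref{prop:vexillary}(1), that the rows of $\D(w)$ are totally ordered by inclusion. Since Kohnert moves preserve column weights and rectification preserves row weights, sending $\D(w)$ to the composition diagram $\D(\Le(w))$ by Proposition~\ref{prop:sw}, I would argue by induction on the number of columns that a chain of rows leaves no room for a second Demazure lowest weight: if a cell of $\D(w)$ has been displaced strictly below its Rothe row to form some $Y \in \KD(\D(w))$, then tracking the rectified labeling $\rect(\Label_{\D(w)}(Y))$ via Theorem~\ref{thm:stable} produces a label strictly exceeding its row, so $\rect(\Label_{\D(w)}(Y))$ is not super-standard and $Y$ is not Yamanouchi. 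Hence $\D(w)$ is the unique Yamanouchi diagram and $\schubert_w = \key_{\Le(w)}$.

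For the converse I would use Proposition~\ref{prop:vexillary}(3): pick $a<b<c<d$ with $w_b < w_a < w_d < w_c$. Among the indices $\{a,b,c,d\}$ the only inversions are $(a,b)$ and $(c,d)$, contributing exactly two cells of $\D(w)$, one lying strictly to the northeast of the other. Pushing the northeastern cell down its column by a Kohnert move until it reaches the row of the southwestern cell produces a diagram $Y' \in \KD(\D(w))$ with $Y' \neq \D(w)$; the model case $w = 2143$, where $\D(w)$ has a cell in row $1$ and a cell in row $3$ and the descent of the upper cell yields the second Yamanouchi diagram of weight $(2,0,0,0)$, is the prototype. Using Theorems~\ref{thm:label-necessary} and \ref{thm:label-sufficient} together with Theorem~\ref{thm:stable}, I would verify that a suitable such $Y'$ is again a Demazure lowest weight element distinct from $\D(w)$, so that $|\Yam(\D(w))| \geq 2$ and $\schubert_w \neq \key_{\Le(w)}$.

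The crux, and the step I expect to be hardest, is this construction and its dual: passing from a combinatorial obstruction (incomparable rows, or the pattern $w_b<w_a<w_d<w_c$) to an honest second Demazure lowest weight diagram, and conversely certifying uniqueness under the chain condition. Both hinge on controlling how the rectified labeling $\rect(\Label_{\D(w)})$ behaves as cells descend, which is exactly what Theorem~\ref{thm:stable} is designed to track. I would organize the whole argument around the equivalence ``single Demazure component $\iff$ connected Kohnert crystal $\iff$ $w$ vexillary,'' mirroring the classical fact that the Stanley symmetric function of $w$ is a single Schur function exactly when $w$ is vexillary.
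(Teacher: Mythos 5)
Your opening reduction is correct, and it is exactly how the paper frames the statement: $\schubert_w = \kohnert_{\D(w)}$ by Theorem~\ref{thm:kohnertrule}, $\D(w)$ is southwest, $\wt(\D(w)) = \Le(w)$, $\D(w) \in \Yam(\D(w))$ by Corollary~\ref{cor:qyam}, so by Theorem~\ref{thm:yamanouchi} and linear independence of Demazure characters the theorem is equivalent to $\Yam(\D(w)) = \{\D(w)\}$ precisely when $w$ is vexillary. In the paper, however, this last equivalence is not re-derived for Rothe diagrams: the statement is obtained as an immediate specialization of Theorem~\ref{thm:vexillary} (for any southwest $D$, $\kohnert_D$ is a single Demazure character if and only if the rows of $D$ are totally ordered by inclusion) via Proposition~\ref{prop:vexillary}(1). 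Your proposal therefore undertakes to re-prove the content of Theorem~\ref{thm:vexillary} in the Rothe case, and both implications are left with genuine gaps.

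For the forward direction, your mechanism --- a cell displaced strictly below its Rothe row yields a rectified label strictly exceeding its row, hence a non-super-standard rectified labeling --- is not a consequence of Theorem~\ref{thm:stable} and is false without vexillarity: rectification \emph{re-labels} cells as they move (Definition~\ref{def:re-label}), and re-labeling can restore super-standardness. Indeed, all four nontrivial Yamanouchi diagrams in Fig.~\ref{fig:Dlwt}, for the non-vexillary permutation $13625847$, contain cells strictly below their Rothe rows. The missing idea, which is the heart of the paper's proof of Theorem~\ref{thm:vexillary}, is that for $D$ vexillary the labeling $\Label_D(T)$ of every $T \in \KD(D)$ is \emph{invariant} under re-labeling (the chain condition on rows supplies, in the adjacent column, a cell carrying the label that would otherwise be lost); only with this invariance in hand does $Y \neq \D(w)$ force some label strictly above its row and hence a failure of the Yamanouchi condition. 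For the converse, your construction is under-determined and can fail as stated: the northeastern pattern cell need not be rightmost in its row (take $w = 21543$ with pattern indices $1,2,3,5$; the pattern cell in row $3$, column $3$ has a cell to its right), Kohnert moves land at the first available position rather than a row of your choosing, and an intermediate landing can remain in the component of $\D(w)$ (already in your model $w = 2143$, stopping in row $2$ gives a diagram in $\D(w)$'s own component) --- while the decisive step, certifying that a suitable $Y'$ is Yamanouchi, is exactly what you defer with ``I would verify.'' The paper's construction is different: choose $s$, then $r < s$, maximal with rows $r,s$ incomparable under inclusion, push the \emph{entire} tail of row $s$ weakly right of the leftmost distinguishing column down to row $r$, and prove the resulting labeling is not invariant under re-labeling, so that its rectification leaves $\KD(\rect(\D(w)))$ and its crystal component carries a Yamanouchi diagram distinct from $\D(w)$. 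In short: the reduction matches the paper, but the two substantive implications --- the actual content of the theorem --- remain unproved, and the sketch offered for the first would fail as written.
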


In light of Corollary~\ref{cor:main}, it is natural to ask when a southwest Kohnert polynomial is equal to a single Demazure character, for which we introduce the following extension of the term \emph{vexillary}.

\begin{definition}
  A diagram $D$ is \newword{vexillary} if the set of rows of $D$ is totally ordered by inclusion. 
  \label{def:vexillary}
\end{definition}

By Proposition~\ref{prop:vexillary}, Rothe diagrams of vexillary permutations are vexillary. Thus Theorem~\ref{thm:LS-vexillary} is a special case of the following.

\begin{theorem}
  Given a southwest diagram $D$, the Kohnert polynomial $\kohnert_D$ is a single Demazure character if and only if $D$ is vexillary.
  \label{thm:vexillary}
\end{theorem}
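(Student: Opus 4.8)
The plan is to reduce the statement, via the Demazure expansion of Theorem~\ref{thm:yamanouchi}, to a count of Yamanouchi diagrams, and then to show that the total-order-by-inclusion condition on the rows of $D$ is exactly what forces that count to equal one. First I would record the reduction. By Theorem~\ref{thm:yamanouchi} we have $\kohnert_D = \sum_{Y \in \Yam(D)} \key_{\wt(Y)}$, and by Corollary~\ref{cor:qyam} we always have $D \in \Yam(D)$. Since the Demazure characters form a basis of the polynomial ring, any nonnegative combination with at least two terms (counted with multiplicity) cannot equal a single $\key_{\comp{a}}$. Hence $\kohnert_D$ is a single Demazure character if and only if $\Yam(D) = \{D\}$, in which case necessarily $\kohnert_D = \key_{\wt(D)}$. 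Thus it suffices to prove that $\Yam(D) = \{D\}$ if and only if $D$ is vexillary in the sense of Definition~\ref{def:vexillary}. I would also note the elementary fact that for $Y \in \KD(D)$, the identity $\wt(Y) = \wt(D)$ already forces $Y = D$: the quantity $\sum_{x \in Y} \mathrm{row}(x)$ is determined by $\wt(Y)$ and strictly decreases under each Kohnert move, so equal weights leave no room for any move away from the maximal diagram $D$.

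For the direction that $D$ vexillary implies $\Yam(D) = \{D\}$, let $Y \in \Yam(D)$. By Theorem~\ref{thm:label-necessary} the labeling $\Label_D(Y)$ is well-defined and flagged, and by Definition~\ref{def:yamanouchi} its rectification $\rect(\Label_D(Y))$ is the super-standard composition diagram. I would argue by induction on columns, processed right-to-left as in Definition~\ref{def:labeling}, that the nesting of the row supports of $D$ removes all freedom from the greedy labeling algorithm: because the supports are totally ordered by inclusion, at each column the admissible labels can be assigned to the cells of $Y$ in only one flagged, semi-proper way. Tracking the swaps and left-pushes of the relabeling in Definition~\ref{def:re-label} through Lemma~\ref{lem:same-move} (column pairing equals label pairing for southwest labelings), the super-standard outcome of rectification then forces each cell of $Y$ to lie in the row matching its final label. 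Counting labels, the number of cells of $Y$ in row $r$ equals $\wt(D)_r$, so $\wt(Y) = \wt(D)$, whence $Y = D$ by the observation above.

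For the converse, suppose $D$ is not vexillary, so there are rows $r < s$ and columns with $(c_1,r),(c_2,s) \in D$ but $(c_1,s),(c_2,r) \notin D$. The southwest property (Definition~\ref{def:diagram-sw}) forces $c_1 < c_2$: were $c_2 < c_1$, the cells $(c_2,s)$ and $(c_1,r)$ would require the southwest corner $(c_2,r) \in D$, a contradiction. From this incomparable ``staircase'' pair I would construct a second Yamanouchi diagram $Y \neq D$, obtained by Kohnert moves that bring the protruding cell $(c_2,s)$, together with the cells its descent forces, down into a configuration realizing a new Demazure lowest weight. I would then certify $Y \in \Yam(D)$ using the labeling criteria (Theorems~\ref{thm:label-necessary}, \ref{thm:label-sufficient}, and \ref{thm:stable}), checking that $\rect(\Label_D(Y))$ is super-standard while $\wt(Y) \neq \wt(D)$, so that $|\Yam(D)| \geq 2$. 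This recovers Theorem~\ref{thm:LS-vexillary} as the special case of Rothe diagrams.

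The main obstacle is the combinatorial bookkeeping controlling the rectification relabeling of Definition~\ref{def:re-label}, and I expect the vexillary direction to be the harder of the two. There the crux is the rigidity claim: that the nesting of rows leaves no choice in the labeling algorithm and that a super-standard \emph{rectified} labeling can arise only from $D$ itself. Establishing this requires following precisely how labels are swapped and pushed left under rectification and invoking the southwest hypothesis repeatedly, in the style of the proofs of Theorem~\ref{thm:stable} and Lemma~\ref{lem:label-crystal}; the subtlety is that relabeling can alter the multiset of labels, so the weight identity $\wt(Y) = \wt(D)$ must be extracted from the final super-standard labeling rather than assumed to survive rectification. In the non-vexillary direction the dual difficulty is producing an explicit second Yamanouchi diagram and verifying its rectified labeling is super-standard with a genuinely different weight.
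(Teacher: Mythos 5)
Your reduction is sound and parallels the paper's framework: by Theorem~\ref{thm:yamanouchi} and linear independence of Demazure characters, $\kohnert_D$ is a single Demazure character if and only if $\Yam(D)=\{D\}$, and your row-sum observation that $\wt(Y)=\wt(D)$ forces $Y=D$ is correct (and slightly cleaner than what the paper leaves implicit). But in both directions the substantive step is a promissory note, and in each case it is exactly where the theorem's content lies. In the vexillary direction, the crux is not ``removing freedom from the greedy labeling algorithm'' --- Definition~\ref{def:labeling} is already deterministic, so there is no freedom to remove. What you actually need, and what you name as the obstacle without establishing, is that for $D$ vexillary the labeling $\Label_D(T)$ of \emph{every} $T\in\KD(D)$ is invariant under the relabeling of Definition~\ref{def:re-label}, so that $\Rect_c^*(\Label_D)=\Label_{\Rect_c^*(D)}$ and the multiset of labels (which records $\wt(D)$) survives rectification; only then does the super-standard outcome for a Yamanouchi $Y$ yield $\wt(Y)=\wt(D)$ and hence $Y=D$. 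The paper proves this invariance with a short dichotomy that is precisely where the total order on rows enters: if $x$ in column $c+1$ label-$c$-pairs with $y$ of strictly smaller label, then either another cell $z$ of column $c+1$ carries label $\Label_D(y)$ (it lies strictly below $x$ and inherits $\Label_D(x)$), or --- because rows $\Label_D(x)$ and $\Label_D(y)$ of $D$ are comparable --- column $c$ contains a cell $w$ with $\Label_D(w)=\Label_D(x)$, already label-paired with some $z$ in column $c+1$, which then takes on $\Label_D(x)$. Without this argument your ``counting labels'' step has no support: the relabeling of Definition~\ref{def:re-label} genuinely changes labels in general, which is the whole point.

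In the converse you correctly extract $c_1<c_2$ from the southwest condition, but the second Yamanouchi diagram is entirely deferred (``I would construct\ldots'', ``I would then certify\ldots''), and the route you propose --- exhibit $Y$ explicitly and verify that $\rect(\Label_D(Y))$ is super-standard with $\wt(Y)\neq\wt(D)$ --- is harder than necessary. The paper never certifies a new Yamanouchi diagram directly: it chooses $s$ and then $r<s$ \emph{maximal} with rows $r,s$ incomparable (maximality makes all intermediate rows comparable to both, which is what legitimizes the sequence of Kohnert moves), takes $c$ the leftmost column with a cell in row $s$ but not row $r$, and pushes the entire tail of row $s$ from column $c$ rightward down to row $r$, producing $T\in\KD(D)$. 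It then shows $\Label_D(T)$ fails to be invariant under relabeling at the rightmost column $c_0<c$ having a cell in row $r$ but not row $s$, so $\rect(T)\notin\KD(\rect(D))$; since the rectified labeling diagram is constant on crystal components (Lemma~\ref{lem:label-crystal}), the component of $T$ carries a Yamanouchi diagram distinct from $D$, and Theorem~\ref{thm:yamanouchi} gives at least two terms. Note also that $\wt(Y)\neq\wt(D)$ is not needed: by your own reduction, any $Y\in\Yam(D)$ with $Y\neq D$ suffices. As written, then, your proposal is a correct scaffold around two unproven lemmas --- relabeling-invariance for vexillary $D$, and an explicit non-vexillary witness on a second component --- and those two lemmas are the proof, not bookkeeping.
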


\begin{proof}
  For $D$ southwest, the labeling $\Label_D$ on $D$ is invariant under relabeling for every column since the rectified labeling $\rect(\Label_D)$ of $\rect(D)$ is $\Label_{\rect(D)}$. To see this, let $x$ be a cell in column $c+1$, say in row $r$. If there is a cell $y$ in column $c$, row $r$, then since $\Label(x) = r = \Label(y)$, $x$ and $y$ will be label $c$-paired. Otherwise, by the southwest condition, there is no cell weakly above $x$ in column $c$, and so $x$ is not label $c$-paired. Thus no labels change.

  Suppose $D$ is vexillary, and let $T\in\KD(D)$. By Theorem~\ref{thm:label-necessary}, $\Label_D$ is well-defined on $T$. We claim it is also invariant under relabeling for every column, and so $\Rect^*_c(\Label_D) = \Label_{\Rect^*_c(D)}$ on $\Rect^*_c(T)$. In particular, by Theorem~\ref{thm:stable}, this means $\rect(T)\in\KD(\rect(D))$, and so by Theorem~\ref{thm:yamanouchi}, the Demazure expansion of the character has a single term. To prove the claim, suppose $x$ in column $c+1$ is label $c$-paired with $y$ in column $c$, where $\Label_D(y) < \Label_D(x)$ so that $x$ will lose its label. If there is another cell $z$ in column $c+1$ with $\Label_D(z) = \Label_D(y)$, then $z$ must lie weakly below $y$ by the labeling algorithm and so strictly below $x$ by the label pairing rule. Thus $z$ (or some other cell) will inherit $\Label_D(x)$, preserving the labelings. Otherwise, since $D$ is vexillary, there must be a cell $w$ in column $c$ with $\Label_D(w) = \Label_D(x)$. By the labeling algorithm, $w$ must lie weakly above $x$ and so by the pairing rule it must already be label $c$-paired with another cell, say $z$, in column $c+1$. Thus $z$ will take on $\Label_D(x)$ in column $c+1$, again showing labels are preserved.

  Now suppose $D$ is not vexillary. Choose $s$ then $r<s$ maximal such that these two rows are not ordered by containment. Let $c$ be the leftmost column with a cell in row $s$ but not in row $r$. Since $D$ is southwest, there is no cell in row $r$ weakly to the right of column $c$. By choice of $r,s$, every row in between is ordered by containment with both. Thus we may construct a diagram $T\in\KD(D)$ by pushing all cells in row $s$ weakly to the right of column $c$ down to row $r$, jumping over rows in between as needed. By Theorem~\ref{thm:label-necessary}, $\Label_D$ is well-defined on $T$. We claim $\Label_D(T)$ is not invariant under re-labeling, and so $\rect(T) \not\in\KD(\rect(D))$ by Theorem~\ref{thm:stable}. Thus there exists a Yamanouchi diagram $Y$ on the connected component of the Kohnert crystal for $\KD(D)$ containing $T$, and since $Y\neq D$ and $D$ is also Yamanouchi, by Theorem~\ref{thm:yamanouchi}, the Demazure expansion of the Kohnert polynomial has more than one term. To prove the claim, since $D$ is not vexillary, we may let $c_0$ be the rightmost column of $D$ with a cell in row $r$ but not in row $s$, and since $D$ is southwest, we have $c_0<c$. By choice of $c_1,c$, we must have the number of cells in rows $r,s$ of $D$ strictly between columns $c_1,c$. Thus $\rect(\Label_D(T_{>c_1})) = \Label_{\rect(D_{>c_1})}$ on $\rect(T_{>c_1})$. However, at column $c_1$, the cells in row $s$ labeled $s$ rectify left but the cells in row $r$ labeled $s$ do not, and so they are re-labeled. since $D$ is southwest, we have strictly more cells in row $r$ than in row $s$ weakly left of column $c_1$, and the labels are never restored. Thus $T$ is not invariant under re-labeling.
\end{proof}

Comparing the first two characterizations in Proposition~\ref{prop:vexillary}, if a southwest diagram is vexillary, then we can permute the columns, maintaining this condition, until the diagram becomes a composition diagram. Theorem~\ref{thm:vexillary} states doing so does not change the Kohnert polynomial, giving rise to the following.

\begin{lemma}
  Let $D$ be a southwest diagram, and let $c$ be a column such that columns $c,c+1$ are ordered by inclusion. Then $\mathfrak{s}_c \cdot D$, the diagram obtained from $D$ by permuting columns $c,c+1$, is southwest and $\kohnert_D = \kohnert_{\mathfrak{s}_c \cdot D}$. 
  \label{lem:vexillary}
\end{lemma}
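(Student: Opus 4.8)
The plan is to prove the two assertions in turn. Writing $\mathrm{col}_j = \{r : (j,r) \in D\}$ for the set of rows occupied in column $j$, I treat the case $\mathrm{col}_c \subseteq \mathrm{col}_{c+1}$, which is the containment relevant to sorting columns toward a composition diagram (equivalently, the case in which $\mathfrak{s}_c$ effects a genuine sorting step); then $\mathfrak{s}_c \cdot D$ has column $c$ equal to $\mathrm{col}_{c+1}$ and column $c+1$ equal to $\mathrm{col}_c$. For the southwest property I would verify Definition~\ref{def:diagram-sw} for $\mathfrak{s}_c \cdot D$ directly: only pairs of cells meeting column $c$ or $c+1$ are affected by the swap, and since a cell in column $c$ (resp.\ $c+1$) of $\mathfrak{s}_c \cdot D$ records a cell in column $c+1$ (resp.\ $c$) of $D$, translating a putative violation back to $D$ reduces the required southwest corner to an instance of the southwest condition for $D$ in every cross-column case, while the pair lying entirely in columns $c,c+1$ is satisfied automatically because $\mathrm{col}_c \subseteq \mathrm{col}_{c+1}$. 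This short case analysis gives that $\mathfrak{s}_c \cdot D$ is southwest.

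The bridge for the polynomial identity is the observation that, under the same containment, $\mathfrak{s}_c \cdot D = \Rect_c^*(D)$: since every row of $\mathrm{col}_c$ also occurs in $\mathrm{col}_{c+1}$, the column $c$-pairing of $D$ pairs each column-$c$ cell with the same-row column-$(c+1)$ cell, so interface rectification pushes exactly the cells of $\mathrm{col}_{c+1}\setminus\mathrm{col}_c$ into column $c$, interchanging the two column contents. I would then prove that $\Rect_c^*$ restricts to a \emph{weight-preserving bijection} $\Rect_c^* \colon \KD(D) \to \KD(\mathfrak{s}_c \cdot D)$. As $\Rect_c^*$ moves cells only horizontally it preserves $\wt$, so by Definition~\ref{def:kohnert_poly} such a bijection gives at once
\[ \kohnert_D = \sum_{T \in \KD(D)} x_1^{\wt(T)_1}\cdots = \sum_{T \in \KD(D)} x_1^{\wt(\Rect_c^*(T))_1}\cdots = \kohnert_{\mathfrak{s}_c \cdot D}. \]

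To produce the bijection I would use the labeling calculus of Section~\ref{sec:label}. For $T \in \KD(D)$, Theorem~\ref{thm:label-necessary} gives that $\Label_D$ is well-defined and flagged on $T$; by the re-labeling construction (Definition~\ref{def:re-label}) the rectified labeling $\Rect_c^*(\Label_D(T))$ is a labeling of $\Rect_c^*(T)$ with labeling diagram $\Rect_c^*(D)=\mathfrak{s}_c\cdot D$, which is flagged by Lemma~\ref{lem:flagged} and, adapting Theorem~\ref{thm:stable} to the single interface $c,c+1$, is the proper labeling $\Label_{\mathfrak{s}_c\cdot D}(\Rect_c^*(T))$; Theorem~\ref{thm:label-sufficient} then places $\Rect_c^*(T)\in\KD(\mathfrak{s}_c\cdot D)$. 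Symmetrically, the reverse interface rectification pushing the relevant column-$c$ cells back into column $c+1$ furnishes a map $\KD(\mathfrak{s}_c\cdot D)\to\KD(D)$; it is well-defined because every $S\in\KD(\mathfrak{s}_c\cdot D)$ satisfies Equation~\eqref{e:2.2} at column $c$ and is therefore interface-rectified, so the two maps are mutually inverse. That $\Rect_c^*$ intertwines the raising operators $\Ke_r$ (Theorem~\ref{thm:commute}) is a convenient consistency check matching the resulting Demazure decompositions of Theorem~\ref{thm:main} on the two sides.

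I expect the genuine obstacle to be exactly the well-definedness of these two maps, i.e.\ that $\Rect_c^*(T)\in\KD(\mathfrak{s}_c\cdot D)$ and, dually, that every $S\in\KD(\mathfrak{s}_c\cdot D)$ is interface-rectified: rectification does \emph{not} commute with Kohnert moves in general (as noted at the close of Section~\ref{sec:rectify}), so neither inclusion is formal and both must be extracted from the nestedness of columns $c,c+1$. The decisive point is that a Kohnert move lowers the \emph{rightmost} cell of a row, which in any row meeting both columns is the column-$(c+1)$ cell; together with $\mathrm{col}_c\subseteq\mathrm{col}_{c+1}$ this keeps column $c$ ahead of column $c+1$ in the sense of Equation~\eqref{e:2.2} throughout, so that interface rectification stays reversible. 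I would make this precise by inducting on the number of Kohnert moves from $D$ and tracking $\Label_D$ through each move via Lemma~\ref{lem:semi-proper} and Theorem~\ref{thm:stable}, which is where the bulk of the work lies.
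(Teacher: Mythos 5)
Your route is essentially the paper's own: the proof in the paper also realizes the column swap as the interface rectification $\Rect_c^*$ (your bridge observation that $\mathfrak{s}_c\cdot D=\Rect_c^*(D)$ when $\mathrm{col}_c\subseteq\mathrm{col}_{c+1}$ is precisely the paper's remark that $\Rect_c^*$ swaps comparable adjacent columns), and it likewise controls the persistence of this behavior under Kohnert moves via the labeling $\Label_D$ of Theorem~\ref{thm:label-necessary}. The packaging differs: you assemble an explicit weight-preserving bijection $\KD(D)\to\KD(\mathfrak{s}_c\cdot D)$ with a reverse interface rectification as inverse, while the paper instead asserts equality of the multisets $\{\!\{\rect(T)\mid T\in\KD(D)\}\!\}$ and $\{\!\{\rect(T)\mid T\in\KD(\mathfrak{s}_c\cdot D)\}\!\}$ and concludes because $\rect$ preserves row weights; both reductions are sound. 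The step you honestly defer --- that $\Rect_c^*(T)\in\KD(\mathfrak{s}_c\cdot D)$ for all $T\in\KD(D)$ and that every $S\in\KD(\mathfrak{s}_c\cdot D)$ is interface-rectified --- is exactly the step the paper compresses into the single clause that the labeling algorithm keeps same-labeled cells in adjacent columns with the left one weakly above the right; if you carry out your induction, the engine you are missing from your citation list is Lemma~\ref{lem:same-move} (column $c$-pairing coincides with label $c$-pairing for proper southwest labelings), which is what lets $\Label_D$ dictate which cells cross the interface, on top of the Lemma~\ref{lem:semi-proper} and Theorem~\ref{thm:stable} machinery you invoke. So: same approach, your version more scaffolded, the paper's more terse, and neither actually writes out the induction.

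One point deserves emphasis: your restriction to $\mathrm{col}_c\subseteq\mathrm{col}_{c+1}$ is not a loss of generality to apologize for --- it is a necessary correction, because in the opposite containment the lemma as stated is false. Take $D$ to be the composition diagram $\D(\comp{a})$ for $\comp{a}=(1,2)$, with cells $(1,1)$, $(1,2)$, $(2,2)$; it is southwest and its columns $\{1,2\}\supset\{2\}$ are ordered by inclusion. Then $\mathfrak{s}_1\cdot D$ has cells $(1,2)$, $(2,1)$, $(2,2)$, which is not southwest (the pair $(1,2)$, $(2,1)$ lacks the corner $(1,1)$), and $\KD(\mathfrak{s}_1\cdot D)=\{\mathfrak{s}_1\cdot D\}$ since the only movable cell $(2,2)$ has no free position below it, giving $\kohnert_{\mathfrak{s}_1\cdot D}=x_1x_2^2$ while $\kohnert_D=\key_{(1,2)}=x_1^2x_2+x_1x_2^2$. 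Relatedly, the paper's clause that ``$\Rect_c^*$ is the identity if the smaller column is $c$ and swaps the columns if the larger column is $c$'' has the two cases reversed: when $\mathrm{col}_c\subseteq\mathrm{col}_{c+1}$ every cell of column $c$ is column $c$-paired in its own row, the cells in rows of $\mathrm{col}_{c+1}\setminus\mathrm{col}_c$ are unpaired, and $\Rect_c^*$ swaps the columns, exactly as your bridge computation says. Note also that the symmetric reduction one might hope for --- deducing the larger-at-left case by applying the proven case to $\mathfrak{s}_c\cdot D$ --- is unavailable precisely because $\mathfrak{s}_c\cdot D$ need not be southwest; fortunately the only use made of the lemma (sorting larger columns leftward toward a composition diagram, as in Theorem~\ref{thm:vexillary}) is the direction you treat, so your proposal proves the statement that is actually true and actually needed.
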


\begin{proof}
  A violation of the southwest condition for columns $c_1,c_2$ forces those columns to be incomparable with respect to inclusion, and so $\mathfrak{s}_c \cdot D$ is southwest whenever $D$ is and columns $c,c+1$ are ordered by inclusion. Moreover, in this case the rectification operator $\Rect_c^*$ is the identity if the smaller column is $c$ and swaps the columns if the larger column is $c$, and this holds even as we apply Kohnert moves since, by Theorem~\ref{thm:label-necessary}, we may label any $T\in\KD(D)$ by $\Label_D(T)$, and cells in adjacent columns with the same label must have the left weakly above the right by the labeling algorithm. Thus we have equality for the multisets of diagrams $\{\!\{\rect(T) \mid T\in\KD(D)\}\!\} = \{\!\{\rect(T) \mid T\in\KD(\mathfrak{s}_c \cdot D)\}\!\}$, giving the result.
\end{proof}

It is a natural consider the equivalence relation $\sim$ on diagrams such that $D_1 \sim D_2$ if and only if $\kohnert_{D_1} = \kohnert_{D_2}$. In this context, Lemma~\ref{lem:vexillary} states $D \sim \mathfrak{s}_c \cdot D$ whenever columns $c,c+1$ are ordered by inclusion. It is an open question whether the transitive close these simple equivalences generate the relation.

%
%

\bibliographystyle{plain} 
\bibliography{kohnert_crystals}

\end{document}